\documentclass{article}
\usepackage{fullpage,cite}
\usepackage{mysty,bm}
\usepackage[capitalise]{cleveref}
\usepackage{amssymb}

\newcommand{\W}{\mathcal{W}_t}
\newcommand{\cond}[1]{\mathrm{cond}({#1})}

%\graphicspath{{./fig/}}

%%%%%%%%%%%%%%%%%%%%%%%%

\begin{document}

\title{A Preconditioned Riemannian Gradient Descent Algorithm for Low-Rank Matrix Recovery}
\author{
Fengmiao Bian\thanks{Department of Mathematics, The Hong Kong University of Science and Technology, Hong Kong, China.
 E-mail: mafmbian@ust.hk} \and
		Jian-Feng Cai\thanks{Department of Mathematics, The Hong Kong University of Science and Technology, Hong Kong, China. E-mail: jfcai@ust.hk} \and
		Rui Zhang\thanks{Theory Lab, Central Research Institute, 2012 Labs, Huawei Technologies Co. Ltd., Hong Kong, China. E-mail: zhangrui191@huawei.com}
		}
\date{}
\maketitle

\begin{abstract}
The low-rank matrix recovery problem often arises in various fields, including signal processing, machine learning, and imaging science. The Riemannian gradient descent (RGD) algorithm has proven to be an efficient algorithm for solving this problem. In this paper, we present a preconditioned Riemannian gradient descent (PRGD) for low-rank matrix recovery. The preconditioner, noted for its simplicity and computational efficiency, is constructed by weighting the $(i,j)$-th entry of the gradient matrix according to the norms of the $i$-th row and the $j$-th column. We establish the theoretical recovery guarantee for PRGD under the restricted isometry property assumption. Experimental results indicate that PRGD can accelerate RGD by up to tenfold in solving low-rank matrix recovery problems such as matrix completion.

\end{abstract}

\begin{keywords}
Riemannian optimization, preconditioned Riemannian gradient descent, low-rank matrix manifold, low-rank matrix sensing, low-rank matrix completion, phase retrieval
\end{keywords}

\begin{AMS}
{15A23 $\cdot$ 15A83 $\cdot$ 68Q25 $\cdot$ 90C26 $\cdot$ 90C53}
\end{AMS}

\section{Introduction}
%%%%%%%%%%%%%%%%%%%%%%%%%%%%%%%%%%%%%%%%%%%%%%%%%%%%%%%%%%%%%%%%
In this paper, we consider the low-rank matrix recovery problem. Let $\bm{X} \in \mathbb{R}^{n_1 \times n_2}$ be an unknown matrix with $\textmd{rank}(\bm{X}) = r$. The goal of the low-rank matrix recovery problem is to recover the matrix $\bm{X}$ from a set of its linear measurements $\bm{y}\in\mathbb{R}^m$ obtained via
\begin{equation}\label{eq:y=AX}
\bm{y} = \mathcal{A}\bm{X}.
\end{equation}
Here $\mathcal{A}~:~\mathbb{R}^{n_1\times n_2}\mapsto\mathbb{R}^m$ is a linear operator defined as
\begin{equation}\label{eq:defA}
\mathcal{A}\bm{Y} = \begin{bmatrix} \langle \bm{A}_1, \bm{Y} \rangle \\ \langle \bm{A}_2, \bm{Y} \rangle \\   \vdots \\  \langle \bm{A}_m, \bm{Y} \rangle \end{bmatrix},\quad\forall~\bm{Y}\in\mathbb{R}^{n_1\times n_2},
\end{equation}
where $\bm{A}_i\in\mathbb{R}^{n_1\times n_2}$, $i = 1, 2, \dots, m$, are measurement matrices, and $\langle \bm{A}_i, \bm{Y} \rangle = \mathrm{trace}(\bm{A}_i^T\bm{Y})$ represents the standard inner product between two matrices $\bm{A}_i$ and $\bm{Y}$. Typically, the number of measurements is much smaller than the matrix dimension, i.e., $m\ll n_1n_2$.

Low-rank matrix recovery arises frequently in various applications across applied science and engineering, such as matrix completion \cite{GNOT}, phase retrieval \cite{H, MISE}, quantum state tomography \cite{G, KKEG}, and multi-task learning \cite{AFSU, AEP}. Different application scenarios often necessitate distinct types of measurement matrices. For instance, in matrix completion, entries of the unknown matrix are observed, and the measurement matrices are natural bases in the matrix space. In phase retrieval, the measurement matrices are rank-$1$, whereas in quantum state tomography, they are constructed by tensor products of Pauli matrices.

Given that $m\ll n_1n_2$, the linear system \eqref{eq:y=AX} is highly underdetermined. A direct inversion of \eqref{eq:y=AX} generally fails to yield a low-rank matrix solution. However, for an $n_1$-by-$n_2$ rank-$r$ matrix, the number of degrees of freedom is $(n_1 + n_2 - r) r$ \cite{V}, which can be much smaller than $n_1n_2$ when $r$ is small. Under the assumption that the unknown matrix is of low rank, it becomes possible to uniquely solve \eqref{eq:y=AX}. Indeed, under relatively mild assumptions on $\mathcal{A}$, a unique low-rank solution of \eqref{eq:y=AX} exists, enabling efficient low-rank matrix recovery.

In recent years, there has been significant interest in developing algorithms for low-rank matrix recovery. A straightforward approach to finding a low-rank matrix from \eqref{eq:y=AX} is to seek a matrix with the lowest rank, i.e., we solve
$$
\min_{\bm{Z} \in \mathbb{R}^{n_1 \times n_2}} \textmd{rank} (\bm{Z}) \quad \textmd{subject to} \quad \mathcal{A}(\bm{Z}) =\bm{y}.
$$
However, this rank minimization problem is non-convex and NP-hard, making it computationally intractable \cite{CLMW, CRT, D}. To overcome these issues, convex relaxation has emerged as an important method for recovering low-rank matrices. In particular, the nuclear norm $\| \bm{Z} \|_{*}$ of $\bm{Z}$, which is the sum of its singular values, can be substituted for the rank of $\bm{Z}$ in the optimization problem. This leads to the following relaxed convex optimization problem  
\begin{equation}\label{eq:nucmin}
\min_{\bm{Z} \in \mathbb{R}^{n_1 \times n_2}} \| \bm{Z} \|_{*} \quad \textmd{subject to} \quad \mathcal{A} (\bm{Z}) =\bm{y}.
\end{equation}
Under various settings of $\mathcal{A}$, it has been shown that the solution of the nuclear norm minimization \eqref{eq:nucmin} is the ground truth low-rank matrix $\bm{X}$ \cite{CR, CT1, G, R, RFP, CSV, L}. However, the non-smoothness of the nuclear norm function makes explicit gradient-type algorithms slow to converge. Therefore, implicit gradient-type algorithms or proximal algorithms are preferred. It has been shown that the proximal operator of the nuclear norm is the singular value thresholding (SVT) operator \cite{CCS}. Therefore, SVT-based algorithms are popular in low-rank matrix recovery, including the forward-backward splitting algorithm \cite{MGC}, alternating direction method of multipliers \cite{TaoY, LCWM, CHY}, proximal alternating linearized minimization algorithm \cite{BST} and accelerated proximal gradient algorithm \cite{TY}. Other algorithms independent of the SVT can be found in \cite{MZWG, XYWZ}.

To improve computational efficiency, non-convex optimization-based algorithms have been developed. Many non-convex optimization algorithms are designed to solve the following rank minimization problem:
\begin{equation}\label{M-m}
\min_{\bm{Z} \in \mathbb{R}^{n_1 \times n_2}} \frac{1}{2} \| \mathcal{A} \bm{Z} - \bm{y} \|_2^2 \quad \textmd{subject to} \quad \textmd{rank}(\bm{Z}) = r.
\end{equation}
One prominent method is the iterative hard thresholding (IHT) method \cite{BTW, JMD, KC, TW}, which projects the iteration matrix onto the set of all rank-$r$ matrices using the hard thresholding operator at each step. To avoid the computation of large-scale singular value decomposition (SVD), some other non-convex methods adopt the matrix factorization $\bm{Z} = \bm{L}\bm{R}^T$ with $\bm{L} \in \mathbb{R}^{n_1 \times r},~\bm{R} \in \mathbb{R}^{n_2 \times r}$ to parametrize the unknown rank-$r$ matrix. Then \eqref{M-m} is recast into the following optimization problem:
\begin{equation}\label{eq:facmodel}
\min_{\bm{L} \in \mathbb{R}^{n_1 \times r},~\bm{R} \in \mathbb{R}^{n_2 \times r}} \| \mathcal{A} (\bm{L}\bm{R}^T) - \bm{y} \|_2^2.
\end{equation}
Although non-convex, it has been shown  \cite{TBSSR,JNS,ZL,SL, TWei, YPCC, CLS, TWF} that, when appropriately initialized (e.g., by the spectral initialization), simple algorithms such as gradient descent converge to the underlying low-rank matrix for various low-rank matrix recovery problems such as matrix sensing \cite{TBSSR, JNS}, matrix completion \cite{ZL, SL,JNS, TWei}, phase retrieval \cite{CLS, TWF}, and robust principal component analysis (robust PCA) \cite{YPCC}. However, the factorization of a low-rank matrix is redundant and non-unique, leading to over-parametrization of the unknown low-rank matrix. Consequently, regularization terms on the factors $\bm{L}$ and $\bm{R}$ have to be imposed to find the desired factorization \cite{ZL, SL}, and the convergence speed of these methods depends highly on the condition number of the unknown low-rank matrix. To avoid over-parametrization and further improve efficiency, Riemannian manifold optimization algorithms have been adopted. These algorithms represent the unknown low-rank matrix as an element on the quotient Riemannian manifold \cite{HGZ, HGZ1, ZHVZ, LLZ} or an element on the Riemannian manifold embedded in $\mathbb{R}^{n_1\times n_2}$ \cite{WCCL, V, LMC, CW-phase}. Riemannian manifold optimization algorithms lead to more efficient low-rank matrix recovery algorithms, and their theoretical recovery guarantee is also provided despite the non-convexity \cite{JNS, ZL, SL, WCCL, LMC, CW-phase}. 

In this paper, we focus on the Riemannian optimization algorithms on embedded manifolds for low-rank matrix recovery problems. It is well known that all rank-$r$ matrices form a smooth manifold $\mathcal{M}_r$ embedded in the matrix space $\mathbb{R}^{n_1\times n_2}$. The low-rank matrix recovery problem can then be reformulated as the following optimization problem
\begin{equation}\label{Model}
\min_{\bm{Z} \in \mathcal{M}_r} \frac{1}{2} \| \mathcal{A} \bm{Z} - \bm{y} \|_2^2.
\end{equation}
One simple yet efficient algorithm for solving \eqref{Model} is Riemannian gradient descent (RGD), which is the gradient descent with respect to the metric on the embedded Riemannian manifold $\mathcal{M}_r$. Using the canonical metric from the ambient space $\mathbb{R}^{n_1\times n_2}$, RGD has been proposed and studied \cite{CWW, WCCL,V, CW-phase, LMC, TMC} for low-rank matrix recovery problems. By exploiting the special structure of $\mathcal{M}_r$ with the canonical metric, the computational cost of RGD per iteration is significantly reduced --- there is no large-scale SVD computation involved, and the computational cost per step is in the same order as gradient descent for the factorization-based optimization \eqref{eq:facmodel}. Moreover, when initialized by the spectral method, it is theoretically shown that RGD converges linearly to the underlying low-rank matrix, and the convergence factor is independent of the condition number of the unknown matrix \cite{CWW, WCCL, CW-phase, TMC}. All those make RGD one of the most efficient algorithms for low-rank matrix recovery. The performance of RGD can be further improved by Riemannian conjugate gradient algorithms. 

The efficiency of RGD in low-rank matrix recovery is highly dependent on the metric used for the embedded manifold $\mathcal{M}_r$. All previously mentioned RGD have utilized the canonical metric inherited from the ambient space $\mathbb{R}^{n_1\times n_2}$. Thus, a natural question arises:
\begin{center}
\emph{Can we design a more suitable metric on $\mathcal{M}_r$ that leads to faster convergence of RGD?}
\end{center}
We answer this question in the affirmative by constructing a data-driven metric using preconditioning techniques. The resulting algorithm, which is RGD with the data-driven metric, is called preconditioned RGD (PRGD). We demonstrate through numerical experiments that PRGD is much more efficient than RGD with the canonical metric. Specifically, our experiments show that PRGD can be up to ten times faster than RGD for the matrix completion problem. On the theoretical side, we also prove that PRGD converges linearly to the underlying low-rank matrix when initialized by one step of IHT starting from the zero matrix, assuming that the sampling operator $\mathcal{A}$ satisfies the restricted isometry property (RIP). Moreover, the linear convergence factor can be a universal constant.

The main contributions of this paper can be summarized as follows: 
\begin{itemize}
\item \emph{Data-driven metric.}  We propose a simple, adaptive, and easy-to-compute metric from the measurement data $\bm{y}$, the sampling operator $\mathcal{A}$, and $t$-th iteration $\bm{X}_t$. Our metric is constructed by restricting an entrywise weighted metric of $\mathbb{R}^{n_1\times n_2}$ onto the tangent spaces of $\mathcal{M}_r$, and the $(i,j)$-th entry of the weighting matrix is calculated by the $2$-norm of the $i$-th row and the $j$-th column of the gradient matrix at $\bm{X}_t$. This weighting matrix is easy to compute, and the computational cost per iteration does not increase compared to RGD with the canonical metric. We apply RGD under this new metric to obtain a novel low-rank matrix recovery algorithm called the preconditioned Riemannian gradient descent (PRGD) algorithm.
\item \emph{Recovery Guarantee.}  We prove the local convergence of PRGD in terms of the restricted isometry property (RIP) of the sensing operator $\mathcal{A}$. This convergence result ensures that PRGD converges linearly to the underlying low-rank matrix when initialized by one step iterative hard thresholding algorithm starting from zero. When the sensing matrices $\bm{A}_i$, $i=1,\ldots,m,$ in $\mathcal{A}$ are random Gaussian matrices, our result shows that the required RIP constant is satisfied with high probability if the number of measurements $m\sim O(nr^2)$. This sample complexity is optimal for non-convex low-rank matrix recovery algorithms. 
\item \emph{Empirical studies.} We evaluate the performance of the PRGD algorithm on various low-rank matrix recovery problems, especially the matrix completion problem. Our numerical results demonstrate that PRGD reduces the number of iterations without adding too much extra computation at each iteration. As a result, PRGD is much more efficient than RGD with the canonical metric --- the PRGD algorithm can be up to 10 times faster than the RGD algorithm for matrix completion. We also demonstrate that our PRGD algorithm outperforms other algorithms like NIHT.
\end{itemize}
 
The rest of this paper is organized as follows. In \cref{sec:alg},  we describe our proposed PRGD algorithm in detail. In \cref{sec:convergence}, we establish the theoretical recovery guarantee of our algorithm. Specifically, we prove the local convergence of PRGD in terms of the restricted isometry property (RIP) of the sensing operator $\mathcal{A}$. In \cref{sec:numerical}, we demonstrate the efficiency of our PRGD algorithm and its superiority over existing algorithms through numerical experiments. In \cref{sec:conclusion}, we conclude the paper and discuss some future research directions.

Before delving into the main content of the paper, we introduce some notations that are used throughout. We use uppercase and lowercase letters to denote matrices and vectors, respectively, and calligraphic letters for operators. For any $\bm{Z} \in \mathbb{R}^{n_1 \times n_2}$, we denote $\sigma_{\max}(\bm{Z})$ and $\sigma_{\min}(\bm{Z})$ as its maximum and minimum singular values, respectively. The condition number of $\bm{Z}$ is denoted as $\textmd{cond}(\bm{Z}) = \frac{\sigma_{\max}(\bm{Z})}{ \sigma_{\min}(\bm{Z})}$. In this paper, we use $\kappa$ to denote the condition number of the ground truth low-rank matrix $\bm{X}$. We denote the Frobenius norm of $\bm{Z}$ as $\|\bm{Z}\|_{F}$, the spectral norm of $\bm{Z}$ as $\| \bm{Z} \|$, and the transpose of $\bm{Z}$ as $\bm{Z}^{T}$. We define a matrix norm $\| \cdot \|_{\vee}$ as
\begin{equation}\label{eq:vnorm}
 \| \bm{Z} \|_{ \vee } = \max\Big\{ \max_{i} \|\bm{Z}(i,:)\|_2 , \max_{j} \|\bm{Z}(:,j)\|_2  \Big\},
 \end{equation}
i.e., the maximum of the $2$-norms of all rows and columns of the matrix. For any $\bm{Z}, \bm{Y} \in \mathbb{R}^{n_1 \times n_2}$, the inner product of $\bm{Z}$ and $\bm{Y}$ is denoted by $\langle \bm{Z},~\bm{Y} \rangle$. For any vector $\bm{z} \in \mathbb{R}^m$, we denote $\| \bm{z} \|_2$ as the $l_2$ norm of the vector. We denote $\mathcal{I}$ as the identity operator. For the linear operator $\mathcal{A}: \mathbb{R}^{n_1 \times n_2} \to \mathbb{R}^m$, we denote the operator norm of $\mathcal{A}$ as $\| \mathcal{A} \|$. We use $\mathcal{A}^* : \mathbb{R}^{m} \to \mathbb{R}^{n_1 \times n_2}$ to represent the adjoint operator of $\mathcal{A}$ under the standard metric. In the matrix recovery problem, the adjoint operator $\mathcal{A}^*: \mathbb{R}^{m} \to \mathbb{R}^{n_1 \times n_2}$ has the following  form:
$$
\mathcal{A}^*(\bm{p}) = \sum_{i=1}^{m} p_i \bm{A}_i, \quad  \textmd{for~~any} \quad \bm{p} \in \mathbb{R}^m.
$$

\section{Algorithms}
\label{sec:alg}
In this section, we propose our novel preconditioned Riemannian gradient descent (PRGD) algorithm. Some related algorithms are reviewed as well.

\subsection{Riemannian Gradient Descent}
We consider the following general optimization on the Riemannian manifold  $\mathcal{M}_r$ embedded in $\mathbb{R}^{n_1\times n_2}$
\begin{equation}\label{GM-model}
\min_{\bm{Z} \in \mathcal{M}_r} F(\bm{Z}),
\end{equation}
where $F:\mathbb{R}^{n_1\times n_2}\to\mathbb{R}$ is an objective function. The Riemannian gradient descent (RGD) is a popular first-order algorithm for solving Riemannian optimization problems. Starting from an initial guess $\bm{X}_0$, RGD generates a sequence of iterations by the following update rule:
\begin{equation}\label{eq:RGDgeneral}
\bm{X}_{t+1} = \mathcal{R} \left( \bm{X}_t - \alpha_t \nabla_{\mathcal{M}_r} F(\bm{X}_t) \right),\quad t=0,1,2,\ldots,
\end{equation}
where $\mathcal{R}$ is a retraction operator that maps a matrix on the tangent space of $\mathcal{M}_r$ at $\bm{X}_t$ back to a matrix on $\mathcal{M}_r$, $\alpha_t > 0$ is the step size, and $\nabla_{\mathcal{M}_r} F(\bm{X}_t)$ is the gradient of $F$ at $\bm{X}_t$ with respect to the Riemannian metric of the manifold $\mathcal{M}_r$. At each iteration, starting from $\bm{X}_t\in\mathcal{M}_r$, RGD first performs one step of the standard gradient descent in the tangent space of $\mathcal{M}_r$ at $\bm{X}_t$, and then it retracts the iteration matrix from the tangent space back to $\mathcal{M}_r$ by the retraction operator $\mathcal{R}$. RGD has been extensively studied in the literature for solving Riemannian optimization problems.

We recall that the low-rank matrix recovery problem can be formulated as the Riemannian optimization problem \eqref{Model}, where the objective function is
\begin{equation}\label{eq:F}
F(\bm{Z})=\frac{1}{2} \| \mathcal{A} \bm{Z} - \bm{y} \|_2^2.
\end{equation}
To apply RGD to \eqref{Model} for solving the low-rank matrix recovery problem, we need to choose a Riemannian metric on $\mathcal{M}_r$ and a retraction operator $\mathcal{R}$.
\begin{itemize}
\item \emph{Canonical metric.} 
As the embedded manifold $\mathcal{M}_r$ is a subset of the matrix space $\mathbb{R}^{n_1\times n_2}$, it is natural to use the canonical metric on $\mathbb{R}^{n_1\times n_2}$ as the metric on $\mathcal{M}_r$. The canonical inner product between any two matrices $\bm{Y}, \bm{Z}\in\mathbb{R}^{n_1\times n_2}$ is given by $\langle\bm{Y}, \bm{Z}\rangle=\sum_{i,j}Y_{ij}Z_{ij}$, and the induced norm is the Frobenius norm $\|\bm{Y}\|_F=\langle\bm{Y},~\bm{Y}\rangle^{\frac12}$. We restrict this canonical inner product and norm onto tangent spaces of $\mathcal{M}_r$ to obtain the canonical metric on $\mathcal{M}_r$. Let $\mathbb{T}_{\bm{Z}}$ denote the tangent space of $\mathcal{M}_r$ at $\bm{Z}\in\mathcal{M}_r$. In the canonical metric of $\mathcal{M}_r$, the inner product on $\mathbb{T}_{\bm{Z}}$ and its induced norm are exactly the same as in $\mathbb{R}^{n_1\times n_2}$. Specifically, for any two matrices $\bm{Y}, \bm{W}\in\mathbb{T}_{\bm{Z}}$, the inner product is $\langle\bm{Y}, \bm{W}\rangle=\sum_{i,j}Y_{ij}W_{ij}$, and the induced norm is the Frobenius norm. With the canonical metric, the gradient of of $F$ on $\mathcal{M}_r$ can be easily calculated as follows:
$$
\nabla_{\mathcal{M}_r} F(\bm{Z})=\mathcal{P}_{\mathbb{T}_{\bm{Z}}}\big(\nabla F(\bm{Z})\big)=\mathcal{P}_{\mathbb{T}_{\bm{Z}}}\big(\mathcal{A}^*(\mathcal{A}\bm{Z}-\bm{y})\big),
$$ 
where $\mathcal{P}_{\mathbb{T}_{\bm{Z}}}$ is the orthogonal projector in $\mathbb{R}^{n_1\times n_2}$, and $\nabla F(\bm{Z})$ is the gradient of $F$ viewed as a function $\mathbb{R}^{n_1\times n_2}\to\mathbb{R}$. 
\item \emph{Retraction by truncated SVD.} The retraction operator $\mathcal{R}$ is used to retract a matrix on a tangent space $\mathbb{T}_{\bm{Z}}$ back to the manifold $\mathcal{M}_r$, such that the error between $\mathcal{R}(\bm{Y})$ and $\bm{Y}$ is controlled for $\bm{Y}\in\mathbb{T}_{\bm{Z}}$ near $\bm{Z}$. A common choice of the retraction operator is the projection operator onto $\mathcal{M}_r$, also known as the best rank-$r$ approximation operator or the truncated SVD operator. Specifically, for any $\bm{Y}\in\mathbb{T}_{\bm{Z}}$ with an SVD $\bm{Y}=\sum_{i}\sigma_i\bm{u}_i\bm{v}_i^T$ ($\sigma_1\geq\sigma_2\geq\ldots$), we choose
$$
\mathcal{R}(\bm{Y})=\mathcal{H}_r(\bm{Y}):=\sum_{i=1}^{r}\sigma_i\bm{u}_i\bm{v}_i^T.
$$
Note that there are other options for the retraction operator.
\end{itemize}
By choosing the canonical metric and the truncated SVD retraction, we can obtain an efficient RGD algorithm for low-rank matrix recovery as in the following:
\begin{equation}\label{eq:RGD}
\bm{X}_{t+1} = \mathcal{H}_r \Big( \bm{X}_t - \alpha_t \mathcal{P}_{\mathbb{T}_{\bm{X}_t}}\big(\mathcal{A}^*(\mathcal{A}\bm{X}_t-\bm{y})\big) \Big),\quad t=0,1,2,\ldots.
\end{equation}

The computation of \eqref{eq:RGD} can be done very efficiently by exploiting the structure of the tangent spaces of $\mathcal{M}_r$. Besides the application of $\mathcal{A}$ and $\mathcal{A}^*$, the most time-consuming operations involved in \eqref{eq:RGD} are the evaluations of $\mathcal{P}_{\mathbb{T}_{\bm{X}_t}}$ and $\mathcal{H}_r$. Their computation is described below. 
\begin{itemize}
\item \emph{Computation of the projection $\mathcal{P}_{\mathbb{T}}$.} Recall that the tangent space $\mathbb{T}_{\bm{Z}}$ at $\bm{Z}\in\mathcal{M}_r$ with a compact SVD $\bm{Z}=\bm{U}_{\bm{Z}}\bm{\Sigma}_{\bm{Z}}\bm{V}_{\bm{Z}}^T$ can be explicitly expressed as 
\begin{equation}\label{TS}
\mathbb{T}_{\bm{Z}} = \big\{\bm{U}_{\bm{Z}} \bm{Q}^T + \bm{P} \bm{V}_{\bm{Z}}^T~|~\bm{P} \in \mathbb{R}^{n_1 \times r},~\bm{Q} \in \mathbb{R}^{n_2 \times r} \big\}.
\end{equation}
Therefore, the orthogonal projector $\mathcal{P}_{\mathbb{T}_{\bm{Z}}}$ has a closed form given by
$$
\mathcal{P}_{\mathbb{T}_{\bm{Z}}}\bm{Y}=\bm{U}_{\bm{Z}}\bm{U}_{\bm{Z}}^T\bm{Y}+\bm{Y}\bm{V}_{\bm{Z}}\bm{V}_{\bm{Z}}^T-\bm{U}_{\bm{Z}}\bm{U}_{\bm{Z}}^T\bm{Y}\bm{V}_{\bm{Z}}\bm{V}_{\bm{Z}}^T.
$$
Thus, when a compact SVD of $\bm{X}_t$ is available, the computation of $\mathcal{P}_{\mathbb{T}_{\bm{X}_t}}$ in \eqref{eq:RGD} costs only $O(r)$ matrix-vector products.
\item \emph{Computation of the truncated SVD $\mathcal{H}_r$.} Note that $\mathcal{H}_r$ in \eqref{eq:RGD} is only applied to a matrix $\bm{Y}\in\mathbb{T}_{\bm{X}_t}$. From \eqref{TS}, we observe that a matrix in a tangent space has a rank at most $2r$. Therefore, we can write $\bm{Y}=\bm{L}\bm{R}^T$ for some $\bm{L}\in\mathbb{R}^{n_1\times 2r}$ and $\bm{R}\in\mathbb{R}^{n_2\times 2r}$. The best rank-$r$ approximation to the rank-$2r$ matrix $\bm{Y}$ can be computed efficiently as follows:
\begin{itemize}
\item We compute the QR decompositions $\bm{L}=\bm{Q}_1\bm{R}_1$ and $\bm{R}=\bm{Q}_2\bm{R}_2$. 
\item Then, $\bm{Y}=\bm{Q}_1\bm{R}_1\bm{R}_2^T\bm{Q}_2^T$. Since $\bm{Q}_1$ and $\bm{Q}_2$ are orthogonal, 
$$
\mathcal{H}_r(\bm{Y})=\mathcal{H}_r(\bm{Q}_1\bm{R}_1\bm{R}_2^T\bm{Q}_2^T)=\bm{Q}_1\cdot \mathcal{H}_r(\bm{R}_1\bm{R}_2^T)\cdot\bm{Q}_2^T.
$$ 
To obtain $\mathcal{H}_r(\bm{Y})$, we only need to compute an SVD of the $2r\times 2r$ matrix $\bm{R}_1\bm{R}_2^T$.
\end{itemize}
Therefore, the computation of  $\mathcal{H}_r$ in \eqref{eq:RGD} can be efficiently done by two QR decompositions of tall matrices of width $r$, one SVD of a $2r\times 2r$ matrix, and a few matrix-vector products.
\end{itemize}
Therefore, the computational cost of each iteration of \eqref{eq:RGD} is very low. Theoretical analysis \cite{WCCL,LMC,CW-phase} has shown that the RGD \eqref{eq:RGD} converges linearly to the underlying low-rank matrix when initialized by one step of IHT starting from $\bm{0}$, under suitable assumptions on $\mathcal{A}$. Moreover, the contraction factor in the linear convergence is universal and independent of the unknown low-rank solution, which means that the RGD \eqref{eq:RGD} can achieve a very accurate solution within very few iterations. Altogether, the RGD \eqref{eq:RGD} is one of the most efficient algorithms for low-rank matrix recovery.

The  RGD \eqref{eq:RGD} and its variants have been proposed and studied for various low-rank matrix recovery problems, such as general matrix sensing \cite{WCCL}, matrix completion \cite{LMC,V}, and phase retrieval \cite{CW-phase}. It has also been extended to some other related problems including low-rank tensor recovery \cite{CLX, KSV}, spectral compressed sensing \cite{CWW}, and robust principal component analysis \cite{ZY,CCW}. 

\subsection{Preconditioned Riemannian Gradient Descent}
The performance of the gradient descent for optimization on Hilbert spaces relies on the metric of the underlying Hilbert space. A common technique to improve the performance of iterative algorithms is to modify the metric. In solving symmetric positive definite linear systems, we often formulate them as quadratic optimization problems in $\mathbb{R}^n$, and the steepest descent algorithm is the gradient descent with an exact line search. To accelerate the steepest descent, we change the metric of $\mathbb{R}^n$ weighted by an approximation of the coefficient matrix, resulting in the preconditioned steepest descent. This preconditioning technique can be combined with more advanced algorithms to obtain state-of-the-art linear system solvers like preconditioned conjugate gradient (PCG) method. In nonlinear unconstrained optimization, we often accelerate the gradient descent algorithm by changing the metric (e.g., weighted by an approximate Hessian), giving rise to preconditioned gradient methods including Newton-type algorithms \cite{LO, N}.

We employ a similar idea to accelerate the RGD \eqref{eq:RGD} by altering the metric of the Riemannian manifold $\mathcal{M}_r$ to obtain a preconditioned RGD (PRGD). Since the objective function $F$ in \eqref{eq:F} is a least-squares fitting to the linear system \eqref{eq:y=AX}, and RGD linearizes $F$ on the tangent space $\mathbb{T}_{\bm{X}_t}$, the convergence speed depends on the condition number of the sensing operator $\mathcal{A}$ restricted on $\mathbb{T}_{\bm{X}_t}$. Let $\mathfrak{g}$ be the metric used on $\mathcal{M}_r$ (i.e., $\langle \cdot, \cdot \rangle_{\mathfrak{g}}$ is the inner product on tangent spaces of $\mathcal{M}_r$ and $\|\cdot\|_{\mathfrak{g}}$ is the induced norm). Then, the condition number of $\mathcal{A}$ restricted on the tangent space $\mathbb{T}_{\bm{X}_t}$ is $\mathrm{cond}_{\mathfrak{g},\mathbb{T}_{\bm{X}_t}}(\mathcal{A})=\frac{C_{l,\mathfrak{g}}}{C_{u,\mathfrak{g}}}$, where
$$
C_{l,\mathfrak{g}} \| \bm{Z} \|_{\mathfrak{g}}^2 \leq \| \mathcal{A} \bm{Z} \|_{2}^2 \leq C_{u,\mathfrak{g}} \| \bm{Z} \|_{\mathfrak{g}}^2,  \qquad  \forall~\bm{Z} \in \mathbb{T}_{\bm{X}_t}.
$$
The smaller $\mathrm{cond}_{\mathfrak{g},\mathbb{T}_{\bm{X}_t}}(\mathcal{A})$, the faster convergence. Our goal is to find a metric $\mathfrak{g}$ that minimizes $\mathrm{cond}_{\mathfrak{g},\mathbb{T}_{\bm{X}_t}}(\mathcal{A})$.

Using the canonical metric (denoted by $\mathfrak{g}_0$), for many low-rank matrix recovery problems, we can prove that
\begin{equation}\label{eq:RIPXt}
(1-\delta) \| \bm{Z} \|_{F}^2 \leq \| \mathcal{A} \bm{Z} \|_{2}^2 \leq (1+\delta) \| \bm{Z} \|_{F}^2,  \qquad  \forall~\bm{Z} \in \mathbb{T}_{\bm{X}_t}
\end{equation}
for some $\delta\in(0,1)$. In the matrix sensing and quantum state tomography problems, it can be shown that $\mathcal{A}$ satisfies the restricted isometry property (RIP), meaning that it is nearly isometric when restricted onto the set of all low-rank matrices. Since all matrices on any tangent spaces of $\mathcal{M}_r$ are of rank at most $2r$, \eqref{eq:RIPXt} is satisfied. In matrix completion and phase retrieval, when $\bm{X}_t$ is sufficiently close $\bm{X}$, it has been proved that \eqref{eq:RIPXt} holds with high probability when $\mathcal{A}$ satisfies some probabilistic models. The inequality \eqref{eq:RIPXt} is often a crucial condition in the proof of recovery guarantee of many convex and non-convex algorithms, including RGD, factorization-based gradient descent, and landscape analysis. Moreover, in many applications such as matrix completion, quantum state tomography, and matrix sensing, the constant $\delta$ in \eqref{eq:RIPXt} can be arbitrarily small if the number of measurements $m$ is sufficiently large. Usually, the dependency of $m$ and $\delta$ is $m\sim O(\delta^{-2})$. When \eqref{eq:RIPXt} is satisfied with a small $\delta$, the restricted condition number of $\mathcal{A}$ under the canonical metric $\mathfrak{g}_0$ is $\mathrm{cond}_{\mathfrak{g}_0,\mathbb{T}_{\bm{X}_t}}(\mathcal{A})=\frac{1+\delta}{1-\delta}$, which is small. Therefore, when $m$ is sufficiently large to make small $\delta$ in \eqref{eq:RIPXt}, the RGD \eqref{eq:RGD} under the canonical metric is one of the most efficient algorithms for low-rank matrix recovery problems.

However, in some other applications such as phase retrieval, the constant $\delta$ in \eqref{eq:RIPXt} cannot be very small even when $m$ is sufficiently large. Additionally, in practical scenarios, we may not have sufficiently large number of measurements $m$ to make $\delta$ in \eqref{eq:RIPXt} very small, resulting in a large restricted condition number $\mathrm{cond}_{\mathfrak{g}_0,\mathbb{T}_{\bm{X}_t}}(\mathcal{A})$. Moreover, \eqref{eq:RIPXt} is established under certain probabilistic models of $\mathcal{A}$ and holds with certain probabilities. In reality, we have only access to a particular realization of $\mathcal{A}$, and even when $m$ is not small, there is still a chance that the available realization $\mathcal{A}$ satisfies \eqref{eq:RIPXt} with a large $\delta$. In all those cases, the RGD \eqref{eq:RGD} may converge slowly. To address this issue, we propose to use a non-canonical metric $\mathfrak{g}$ that results in a small restricted condition number $\mathrm{cond}_{\mathfrak{g},\mathbb{T}_{\bm{X}_t}}(\mathcal{A})$ under $\mathfrak{g}$. This approach can potentially improve the performance of the RGD algorithm in scenarios where the constant $\delta$ in \eqref{eq:RIPXt} is large, and the RIP is not well satisfied.

\subsubsection{Data-Driven Metric Construction.}
Our metric $\mathfrak{g}$ is constructed from the measurement operator $\mathcal{A}$, the measurement vector $\bm{y}$, and the iteration matrices, making it a \emph{data-driven metric} that adapts to the measurement and algorithmic data. To simplify notation, we use $\mathbb{T}_{t}$ to denote the tangent space $\mathbb{T}_{\bm{X}_t}$ for all $t\in\mathbb{N}$. Since only metrics on $\mathbb{T}_{t}$ affect the RGD iteration \eqref{eq:RGDgeneral}, we only need to construct $\mathfrak{g}$ on $\mathbb{T}_{t}$. 

The construction of our metric $\mathfrak{g}$ on each tangent space $\mathbb{T}_{t}$ involves two steps. In the first step, we construct a metric on the ambient space $\mathbb{R}^{n_1\times n_2}$ using preconditioning techniques for the linear space optimization problem $\min_{\bm{Z}\in\mathbb{R}^{n_1\times n_2}}F(\bm{Z})$. In the second step, we restrict the metric of $\mathbb{R}^{n_1\times n_2}$ onto the tangent space $\mathbb{T}_t$ to obtain the metric $\mathfrak{g}$. The two steps are described in detail below. 

\begin{itemize}
\item \emph{Constructing a metric on $\mathbb{R}^{n_1\times n_2}$ via preconditioning.} To construct our metric $\mathfrak{g}$, we first construct a metric on the ambient space $\mathbb{R}^{n_1\times n_2}$ using preconditioning techniques for linear space matrix optimizations. Without the low-rank constraint, the optimization problem in \eqref{GM-model} becomes a typical smooth optimization in the linear space $\mathbb{R}^{n_1\times n_2}$:
$$
\min_{\bm{Z}\in\mathbb{R}^{n_1\times n_2}}F(\bm{Z}), \qquad F(\bm{Z})=\frac12\|\mathcal{A}\bm{Z}-\bm{y}\|_2^2.
$$ 
To solve this problem, preconditioned gradient methods have emerged as a powerful class of algorithms, including Newton and quasi-Newton methods \cite{LO, N}. However, it is too expensive to construct the preconditioner in Newton and quasi-Newton methods. To overcome this, efficient and effective preconditioned algorithms have also been proposed, driven by modern machine learning, especially the training of deep neural networks.

Let $\bm{G}_t$ be the gradient of $F$ in $\mathbb{R}^{n_1\times n_2}$, i.e.,
$$
\bm{G}_t = \mathcal{A}^*(\mathcal{A}(\bm{X}_t) - \bm{y}).
$$
AdaGrad  \cite{DHS} constructs its preconditioner by accumulating outer products of historical gradients. It has a heavy weight on historical gradients, discouraging the algorithm from updating along historical directions again. This makes AdaGrad a highly efficient method in practice. However, the computation of the outer product of two matrices is still expensive, making the AdaGrad preconditioner unsuitable for our matrix optimization. To overcome this, the Shampoo preconditioner \cite{GKS} has been proposed. Shampoo approximates the AdaGrad preconditioner using a tensor product. Specifically, it defines
\begin{equation}\label{eq:ShamPrecon}
\bm{L}_t = \epsilon \bm{I}_{n_1} +  \sum_{i=1}^{t}\bm{G}_{i} \bm{G}_{i}^T,\quad\mbox{and}\quad
\bm{R}_t = \epsilon \bm{I}_{n_2} + \sum_{i=1}^{t}\bm{G}_{i}\bm{G}_{i}^T.
\end{equation}
The preconditioner is then $\bm{R}_t^{\frac14}\otimes\bm{L}_{t}^{\frac14}$, where the rows and columns are weighted separately by $\bm{L}_{t}^{\frac14}$ and $\bm{R}_{t}^{\frac14}$ in the weighted inner product. Additionally, the rows (resp. columns) are preconditioned by the accumulation of outer products of rows (resp. columns) of historical gradients. 

Shampoo is still too expensive for low-rank matrix recovery, especially for large-scale problems. Indeed, in matrix completion, the matrix is so large that a moderate computer cannot store one full dense matrix; however, Shampoo needs the full dense matrices $\bm{L}_t$ and $\bm{R}_t$. Therefore, to further reduce the temporal and spatial complexity, we further simplify the Shampoo preconditioner. First, we only use the gradient at $\bm{X}_t$ instead of all historical gradients involved in Shampoo. Second, we use only the diagonal part of the preconditioner matrix in Shampoo to reduce computation and memory. Specifically, we replace $\bm{L}_t$ and $\bm{R}_t$ in Shampoo by their diagonal approximation
\begin{equation}\label{eq:LtRt}
\bm{L}_t = \epsilon \bm{I}_{n_1} +  \textmd{diag}(\bm{G}_{t} \bm{G}_{t}^T)\quad\mbox{and}\quad
\bm{R}_t = \epsilon \bm{I}_{n_2} + \textmd{diag}(\bm{G}_{t}^T\bm{G}_t).
\end{equation}
Then, the rows and columns are weighted separately by $\bm{L}_{t}^{\frac14}$ and $\bm{R}_{t}^{\frac14}$ in the weighted inner product, resulting in our metric in $\mathbb{R}^{n_1\times n_2}$ --- for any $\bm{Z}, \bm{Y} \in \mathbb{R}^{n_1 \times n_2}$, 
\begin{equation}\label{eq:weightedinnerprod}
\big\langle \bm{Z}, \bm{Y} \big\rangle_{\mathcal{W}_t} = \big\langle \mathcal{W}_t \bm{Z}, \bm{Y} \big\rangle = \big\langle \bm{L}_t^{\frac14} \bm{Z} \bm{R}_t^{\frac14}, \bm{Y} \big\rangle.
\end{equation}

Obviously, the weighting operator $\mathcal{W}_t$ in our metric is applied entrywise. It is noteworthy that each diagonal element of the diagonal matrix $\diag (\bm{G}_t\bm{G}_t^T)$ is equal to $\| \bm{G}_t(i, :)\|_2^2$, where $\bm{G}_t(i, : )$ represent the $i$-th row of the gradient $\bm{G}_t$. Similarly, the $j$-th diagonal element of $\diag (\bm{G}_t^T\bm{G}_t)$ is the square of the norm of the $j$-th column for the gradient $\bm{G}_t$. Consequently, $\mathcal{W}_t$ in our metric rebalances the $(i,j)$-th entry of a matrix based on the 2-norms of the $i$-th row and $j$-th column of the gradient $\bm{G}_t$.

\item \emph{Constructing the metric on $\mathbb{T}_t$ by restriction.} We simply restrict the previously constructed metric in $\mathbb{R}^{n_1 \times n_2}$ to the tangent space $\mathbb{T}_t$.  For any $\bm{Z}, \bm{Y} \in \mathbb{T}_t$, the inner product can be expressed as follows:
\begin{equation}\label{eq:weightedinnerprod}
\big\langle \bm{Z}, \bm{Y} \big\rangle_{\mathcal{W}_t} = \big\langle \mathcal{W}_t \bm{Z}, \bm{Y} \big\rangle = \big\langle \bm{L}_t^{\frac14} \bm{Z} \bm{R}_t^{\frac14}, \bm{Y} \big\rangle,
\end{equation}
where $\bm{L}_t$ and $\bm{R}_t$ are defined in \eqref{eq:LtRt}. The norm induced by the inner product  $\langle \cdot, \cdot \rangle_{\mathcal{W}_t}$ is denoted by $\| \cdot \|_{\mathcal{W}_t}$. 

\end{itemize}

\subsubsection{Riemannian Gradient Descent under the new metric.}
To perform RGD \eqref{eq:RGDgeneral} using the new metric, we need to compute the Riemannian gradient $\nabla_{\mathcal{M}_r}F(\bm{X}_t)$ under the new metric. For this purpose, let $\bm{X}_t(s)$, $s\in\mathbb{R}$, be a smooth curve on $\mathcal{M}_r$ with $\bm{X}_t(0)=\bm{X}_t$. Let $\widetilde{\mathcal{P}}_{\mathbb{T}_t}$ be the orthogonal projector onto $\mathbb{T}_t$ in $\mathbb{R}^{n_1\times n_2}$ under the metric \eqref{eq:weightedinnerprod}. Differentiating $F(\bm{X}_t(s))$ with respect to $s$ yields
$$
\frac{d}{ds}F(\bm{X}_t(s))\Big|_{s=0}=\langle\dot{\bm{X}_t}(0), \bm{G}_t\rangle= \langle\dot{\bm{X}_t}(0), \mathcal{W}_t^{-1}\bm{G}_t\rangle_{{\mathcal{W}_t}}=\langle\dot{\bm{X}_t}(0), \widetilde{\mathcal{P}}_{\mathbb{T}_t}\mathcal{W}_t^{-1}\bm{G}_t\rangle_{{\mathcal{W}_t}},
$$
where in the last term, $\dot{\bm{X}_t}(0)$ and $ \widetilde{\mathcal{P}}_{\mathbb{T}_t}\mathcal{W}_t^{-1}\bm{G}_t$ are both in $\mathbb{T}_t$, and the inner product is the one we constructed in \eqref{eq:weightedinnerprod}. Therefore, the Riemannian gradient under the new metric is given by
$$
\nabla_{\mathcal{M}_r}F(\bm{X}_t)= \widetilde{\mathcal{P}}_{\mathbb{T}_t}\mathcal{W}_t^{-1}\bm{G}_t= \widetilde{\mathcal{P}}_{\mathbb{T}_t} (\bm{L}_t^{-\frac14} \bm{G}_t \bm{R}_t^{-\frac14} ).
$$
We will still use the truncated SVD as the retraction operator. Combining all of the above, we obtain our proposed preconditioned RGD (PRGD) for low-rank matrix recovery. The complete PRGD algorithm is summarized in \cref{PRGD}.

\begin{center}
\begin{minipage}{0.95\linewidth}
	
\begin{algorithm}[H]
\caption{Preconditioned Riemannian Gradient Descent (PRGD).}
\label{PRGD}
\begin{algorithmic}
\STATE{ Initialize $\bm{X}_0 = \mathcal{H}_r(\mathcal{A}^* \bm{y})$. %Input the sequence $\{\alpha_t\}$ and $\{\epsilon_t\}$.
}
\vspace{0.2cm}
\STATE{ {\bf While} stopping criteria is not satisfied {\bf do}\\
\vspace{0.2cm}
{ \qquad\quad~ {\bf Compute gradient:}} $\bm{G}_t = \mathcal{A}^*(\mathcal{A}(\bm{X}_t) - \bm{y})$.\\
\vspace{0.2cm}
{ \qquad\quad~ {\bf Choose $\epsilon_t$ and $\alpha_t$.}}\\
\vspace{0.2cm}
{ \qquad\quad~ {\bf Update preconditioners:}}
\vspace{0.2cm}
$$
\bm{L}_{t} = \epsilon_t \bm{I}_{n_1} +  \textmd{diag}(\bm{G}_{t} \bm{G}_{t}^T).
$$
$$
\bm{R}_t = \epsilon_t \bm{I}_{n_2} + \textmd{diag}(\bm{G}_{t}^T\bm{G}_t).
$$
\vspace{0.2cm}
{\qquad\quad{\bf Update intermediate parameters:}}
$$
 \bm{W}_{t} = \bm{X}_t - \alpha_t  \widetilde{\mathcal{P}}_{\mathbb{T}_t} (\bm{L}_t^{-\frac14} \bm{G}_t \bm{R}_t^{-\frac14} ).
$$
{ \qquad\quad~{\bf  Update X:}}
$$
\bm{X}_{t+1} = \mathcal{H}_{r}(\bm{W}_{t}).
$$
}
\STATE{ Output $\bm{X}_{t+1}$ when the stopping criteria is met.}
\end{algorithmic}
\end{algorithm}
\end{minipage}
\end{center}

\subsubsection{Computation in PRGD.}
\label{sec:comPRGD}
Similar to RGD \eqref{eq:RGD}, the main computations of PRGD, as described in \cref{PRGD}, involve the evaluation of the projection operator $\widetilde{\mathcal{P}}_{\mathbb{T}_t}$ and the retraction operator $\mathcal{H}_{r}$, as well as the calculation of $\bm{G}_t$. By utilizing the structure of the Riemannian manifold $\mathcal{M}_r$, both operators $\widetilde{\mathcal{P}}_{\mathbb{T}_t}$ and $\mathcal{H}_{r}$ can be evaluated efficiently.

\begin{itemize}
\item \emph{Computation of the projection $\widetilde{\mathcal{P}}_{\mathbb{T}_t}$.}  Recall that $\widetilde{\mathcal{P}}_{\mathbb{T}_t}$ denotes the projection onto the tangent space $\mathbb{T}_t$ with respect to the inner product $\langle \cdot, \cdot \rangle_{\mathcal{W}_t}$ of $\mathbb{R}^{n_1\times n_2}$. Let $\bm{X}_t = \bm{U}_t \bm{\Sigma}_t \bm{V}_t^T$ be a compact SVD with $\bm{U}_t = [\bm{u}_t^1, \bm{u}_t^2, \dots, \bm{u}_t^r] \in \mathbb{R}^{n_1 \times r}$ and $\bm{V}_t =[ \bm{v}_t^1, \bm{v}_t^2, \dots, \bm{v}_t^r] \in \mathbb{R}^{n_2 \times r}$. We define the weighted inner products 
\begin{equation}\label{eq:weightedinnerprodRn1}
\langle \bm{x}, \bm{y} \rangle_{\bm{L}_t^{\frac14}} = \langle \bm{L}_t^{\frac14}\bm{x}, \bm{y} \rangle ~~~ \textmd{in}  ~~~ \mathbb{R}^{n_1}  ~~~~ \textmd{and} ~~~~ \langle \bm{x}, \bm{y} \rangle_{\bm{R}_t^{\frac14}} = \langle \bm{R}_t^{\frac14}\bm{x}, \bm{y} \rangle ~~~ \textmd{in} ~~~ \mathbb{R}^{n_2},
\end{equation}
and orthogonalize $\big\{\bm{u}_t^i\big\}_{i =1}^r$ and $\big\{\bm{v}_t^i\big\}_{i =1}^r$ under these inner products to obtain orthonormal bases 
 \begin{equation}\nonumber
 \begin{aligned}
\widetilde{\bm{U}}_t = \bm{U}_t \big( \bm{U}_t^T \bm{L}_t^{\frac14} \bm{U}_t \big)^{-\frac12} := \left[\tilde{\bm{u}}_t^1, \tilde{\bm{u}}_t^2, \dots, \tilde{\bm{u}}_t^r \right] \in \mathbb{R}^{n_1 \times r},\\
\widetilde{\bm{V}}_t = \bm{V}_t \big( \bm{V}_t^T \bm{R}_t^{\frac14} \bm{V}_t \big)^{-\frac12} := \left[\tilde{\bm{v}}_t^1, \tilde{\bm{v}}_t^2, \dots, \tilde{\bm{v}}_t^r \right] \in \mathbb{R}^{n_2 \times r}.
\end{aligned}
 \end{equation}
We further extend $\widetilde{\bm{U}}_t$ and $\widetilde{\bm{V}}_t$ to full orthonormal bases of $(\mathbb{R}^{n_1},\langle\cdot,\cdot\rangle_{\bm{L}_t^{\frac14}})$ and $(\mathbb{R}^{n_2},\langle\cdot,\cdot\rangle_{\bm{R}_t^{\frac14}})$ respectively. Then, an orthonormal basis of $\mathbb{T}_t$ with respect to $\langle \cdot, \cdot \rangle_{\mathcal{W}_t}$ is given by
$\big\{\tilde{\bm{u}}_t^i (\tilde{\bm{v}}_t^j)^T \big\}_{\min\{i, j\} \leq r}$.
Using this orthonormal basis, we can calculate the projection $\widetilde{\mathcal{P}}_{\mathbb{T}_t}$ onto $\mathbb{T}_t$ directly. Specifically, for any $\bm{Z} \in \mathbb{R}^{n_1 \times n_2}$, we have
\begin{equation}\label{proj}
\begin{split}
\widetilde{\mathcal{P}}_{\mathbb{T}_t} \bm{Z} 
&=\sum_{(i,j):\min\{i,j\}\leq r}\langle\bm{Z},\tilde{\bm{u}}_t^i (\tilde{\bm{v}}_t^j)^T\rangle_{\mathcal{W}_t}\cdot\tilde{\bm{u}}_t^i (\tilde{\bm{v}}_t^j)^T =  \sum_{(i,j):\min\{i,j\}\leq r}\langle\bm{Z},\bm{L}_t^{\frac14}\tilde{\bm{u}}_t^i (\tilde{\bm{v}}_t^j)^T\bm{R}_t^{\frac14}\rangle\cdot\tilde{\bm{u}}_t^i (\tilde{\bm{v}}_t^j)^T\cr
&=\sum_{(i,j):\min\{i,j\}\leq r}(\tilde{\bm{u}}_t^i)^T\bm{L}_t^{\frac14}\bm{Z}\bm{R}_t^{\frac14}\tilde{\bm{v}}_t^j\cdot\tilde{\bm{u}}_t^i (\tilde{\bm{v}}_t^j)^T\cr
&= \widetilde{\bm{U}}_t \widetilde{\bm{U}}_t^T \bm{L}_t^{\frac14} \bm{Z} + \bm{Z} \bm{R}_t^{\frac14} \widetilde{\bm{V}}_t \widetilde{\bm{V}}_t^T - \widetilde{\bm{U}}_t \widetilde{\bm{U}}_t \bm{L}_t^{\frac14} \bm{Z} \bm{R}_t^{\frac14} \widetilde{\bm{V}}_t\widetilde{\bm{V}}_t^T.
\end{split}
\end{equation}
\item \emph{Computation of the truncated SVD $\mathcal{H}_r$.}
\label{2.1.2}
In Algorithm \ref{PRGD}, it is clear that $\bm{W}_t\in\mathbb{T}_t$. Using the form of tangent space $\mathbb{T}_t$ in \eqref{TS}, we can see that the rank of $\bm{W}_t$ is at most $2r$. This allows us to efficiently compute $\mathcal{H}_r(\bm{W}_t)$ in Algorithm \ref{PRGD} without resorting to a large-scale SVD. To see this, we can directly compute $\bm{W}_t$ as follows: 
\begin{equation*}
\begin{aligned}
\bm{W}_t  &= \bm{X}_t + \widetilde{\mathcal{P}}_{\mathbb{T}_t} (\bm{Z}_t)\\
&= \bm{U}_t \bm{\Sigma}_t \bm{V}_t^T +  \widetilde{\bm{U}}_t \widetilde{\bm{U}}^T_t \bm{L}_t^{\frac14} \bm{Z}_t + \bm{Z}_t \bm{R}_t^{\frac14} \widetilde{\bm{V}}_t \widetilde{\bm{V}}^T_t - \widetilde{\bm{U}}_t \widetilde{\bm{U}}^T_t \bm{L}_t^{\frac14} \bm{Z}_t \bm{R}_t^{\frac14} \widetilde{\bm{V}}_t \widetilde{\bm{V}}^T_t\\
&= \bm{U}_t \bm{\Sigma}_t \bm{V}_t^T + \bm{U}_t \big(\underbrace{\bm{U}_t^T \bm{L}_t^{\frac14} \bm{U}_t}_{:=\bm{M}_1}\big)^{-1} \bm{U}_t^T \bm{L}_t^{\frac14} \bm{Z}_t + \bm{Z}_t \bm{R}_t^{\frac14} \bm{V}_t \big(\underbrace{\bm{V}_t^T \bm{R}_t^{\frac14} \bm{V}}_{:=\bm{M}_2} \big)^{-1} \bm{V}_t^T \\
&\quad - \bm{U}_t \big(\bm{U}_t^T \bm{L}_t^{\frac14} \bm{U}_t \big)^{-1} \bm{U}_t^T \bm{L}_t^{\frac14} \bm{Z}_t  \bm{R}_t^{\frac14} \bm{V}_t \big(\bm{V}_t^T \bm{R}_t^{\frac14} \bm{V}_t \big)^{-1} \bm{V}_t^T\\
&=\bm{U}_t \big(\underbrace{\bm{\Sigma}_t + \bm{M}_1^{-1} \bm{U}_t^T\bm{L}_t^{\frac14} \bm{Z}_t \bm{V}_t + \big( \bm{U}_t^T  - \bm{M}_1^{-1} \bm{U}_t^T\bm{L}_t^{\frac14} \big) \bm{Z}_t \bm{R}_t^{\frac14} \bm{V}_t \bm{M}_2^{-1}}_{:=\bm{K}_0}  \big) \bm{V}_t^T\\
&\quad +\bm{U}_t \underbrace{\bm{M}_1^{-1} \bm{U}_t^T \bm{L}_t^{\frac14} \bm{Z}_t \left( \bm{I} - \bm{V}_t\bm{V}_t^T \right)}_{:=\bm{Y}_1^T}  + \underbrace{ \left(\bm{I} - \bm{U}_t  \bm{U}_t^T \right) \bm{Z}_t \bm{R}_t^{\frac14} \bm{V}_t \bm{M}_2^{-1}}_{:=\bm{Y}_2} \bm{V}_t^T\\
\end{aligned}
\end{equation*}
We can then perform a QR factorization of the matrices $\bm{Y}_1$ and $\bm{Y}_2$ to obtain $\bm{Y}_1 = \bm{Q}_1\bm{K}_1$ and $\bm{Y}_2 = \bm{Q}_2\bm{K}_2$, respectively. It is obvious that $\bm{U}_t^T\bm{Q}_2 =\bm{0}$ and $\bm{V}_t^T\bm{Q}_1=\bm{0}$. Using the resulting factorizations, we can rewrite $\bm{W}_t$ as follows: 
\begin{equation*}
\begin{aligned}
\bm{W}_t &= \bm{U}_t\bm{K}_0 \bm{V}_t^T + \bm{U}_t \bm{K}_1^T\bm{Q}_1^T + \bm{Q}_2 \bm{K}_2 \bm{V}_t^T=\big[\bm{U}_t \quad \bm{Q}_2 \big] \underbrace{\begin{bmatrix}\bm{K}_0 & \bm{K}_1^T \\ \bm{K}_2 & \bm{0} \end{bmatrix}}_{\bm{M}_t}  \begin{bmatrix} \bm{V}_t^T \\ \bm{Q}_1^T \end{bmatrix},
\end{aligned}
\end{equation*}
where $\bm{M}_t$ is a $2r \times 2r$ matrix. Since $\big[\bm{U}_t \quad \bm{Q}_2 \big]$ and $\big[\bm{V}_t \quad \bm{Q}_1 \big]$ are both orthogonal matrices, the SVD of $\bm{W}_t$ can be obtained from the SVD of $\bm{M}_t$, which can be computed using $O(r^3)$ flops. Therefore, the truncated SVD $\mathcal{H}_r(\bm{W}_t)$ can be done efficiently without any large-scale SVD.
\end{itemize}
Compared to the computation of RGD \eqref{eq:RGD} with the canonical metric, the proposed PRGD involves the calculation of additional matrices $\bm{L}_t$, $\bm{R}_t$, $\bm{K}_1^{-1}$, and $\bm{K}_2^{-1}$, which are either diagonal or in small size. Thus, PRGD incurs only minimal extra computation. As we demonstrate later, PRGD can significantly reduce the number of iterations required compared to RGD. Ultimately, PRGD is a much more efficient algorithm than RGD.

%%%%%%%%%%%%%%%%%%%%%%%%%%%%%%%%%%%%%%%%%%%%%%%%%%%%%%%%%%%%%%%%%%%%%%%%%%%%%%%%

\section{Recovery Guarantee}
\label{sec:convergence}
In this section, we provide a recovery guarantee for the PRGD algorithm (i.e., \cref{PRGD}), assuming that the sensing operator $\mathcal{A}$ satisfies the restricted isometry property (RIP) \cite{RFP}. The analysis involves modifying the proof in \cite{WCCL} for RGD under the canonical metric to the non-canonical metric used in PRGD. When RIP is not satisfied, the recovery guarantee theory of PRGD can still be established by combining techniques from this paper and the corresponding proofs \cite{LMC,CW-phase} for RGD with the canonical metric.

RIP for low-rank matrix recovery was first introduced in \cite{RFP}. It can be traced back to \cite{CT}, where RIP for compressed sensing was developed. For low-rank matrix recovery, RIP is defined as follows.
\begin{definition}[Restricted Isometry Property (RIP)]
The linear operator $\mathcal{A} : \mathbb{R}^{n_1 \times n_2} \to \mathbb{R}^m$ satisfies the restricted isometry property of order $s$ if there exists a constant $\delta_s \in (0,1)$ such that
\begin{equation}\label{RIP}
(1 - \delta_{s}) \| \bm{Z} \|_{F}^2 \leq \| \mathcal{A}\bm{Z} \|_2^2 \leq (1+ \delta_{s}) \| \bm{Z} \|_{F}^2,
\qquad\forall~\bm{Z}~:~\mathrm{rank}(\bm{Z})\leq s.
\end{equation}
\end{definition}
When the sensing matrices $\bm{A}_i$, $1 \leq i \leq m$, in \eqref{eq:defA} contains i.i.d. random entries that follow a mean-$0$ and variance-$\frac{1}{m}$ sub-Gaussian distribution, the sensing operator $\mathcal{A}$ satisfies RIP with high probability, provided $m\geq C(n_1+n_2)s\log(n_1n_2)$, where $C$ depends on $\delta_s$. RIP has been used extensively in analyzing convex and non-convex approaches for low-rank matrix recovery. Under the assumption of RIP, the nuclear norm minimization gives an exact recovery of the underlying low-rank matrix \cite{RFP}. In our previous work \cite{WCCL}, we have shown that, when $\mathcal{A}$ satisfies RIP, the RGD \eqref{eq:RGD} can recover $\bm{X}$ exactly when initialized by one step of IHT starting from $\bm{0}$. There are also many other results showing that RIP implies exact low-rank matrix recovery by, e.g., singular value projection, factor gradient descent, alternating minimization, Gauss-Newton on quotient manifold, and global landscape analysis.

With the help of RIP, we now present our main result in the following theorem.
\begin{theorem}[Recovery Guarantee of PRGD]\label{convergence}
Assume that $\mathcal{A}$ satisfies RIP of order $3r$ with constant $\delta_{3r}$. Let $\bm{X}\in\mathbb{R}^{n_1\times n_2}$ have rank $r$ and condition number $\kappa$. Set the input data $\bm{y}=\mathcal{A}\bm{X}$ and the initial guess $\bm{X}_0 = \mathcal{H}_r (\mathcal{A}^*(\bm{y}))$ in Algorithm \ref{PRGD}, and choose $\alpha_t,\epsilon_t>0$ be some positive parameters that are computed from $\bm{G}_t$ for all $t\in\mathbb{N}$. Then, if  
$$
\delta_{3r} \leq \frac{1}{50 + 100 \sqrt{r} \kappa},
$$
the sequence $\{\bm{X}_t\}_{t\in\mathbb{N}}$ generated by Algorithm \ref{PRGD} satisfies
$$
\| \bm{X}_{t+1} - \bm{X} \|_{F} \leq \mu \| \bm{X}_t - \bm{X} \|_{F},
$$
where $\mu\leq 0.995$ is a positive constant.
\end{theorem}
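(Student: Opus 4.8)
The plan is to adapt the canonical-metric RGD analysis of \cite{WCCL} to the preconditioned setting, carrying $\W$ through every estimate. Writing $\bm{E}_t=\bm{X}_t-\bm{X}$ and using $\bm{y}=\mathcal{A}\bm{X}$, the gradient is $\bm{G}_t=\mathcal{A}^*\mathcal{A}\bm{E}_t$, and since $\bm{X}_t\in\mathbb{T}_t$ the intermediate iterate obeys $\bm{W}_t=\widetilde{\mathcal{P}}_{\mathbb{T}_t}(\bm{X}_t-\alpha_t\W^{-1}\bm{G}_t)$. Because $\mathcal{H}_r$ is the best rank-$r$ approximation and $\bm{X}$ has rank $r$, we get $\|\bm{X}_{t+1}-\bm{X}\|_F\le 2\|\bm{W}_t-\bm{X}\|_F$, so it suffices to contract $\|\bm{W}_t-\bm{X}\|_F$. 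Splitting $\bm{E}_t=\mathcal{P}_{\mathbb{T}_t}\bm{E}_t+\mathcal{P}_{\mathbb{T}_t^{\perp}}\bm{E}_t$ and noting $\mathcal{P}_{\mathbb{T}_t^{\perp}}\bm{E}_t=-\mathcal{P}_{\mathbb{T}_t^{\perp}}\bm{X}$ (as $\bm{X}_t\in\mathbb{T}_t$), I would decompose $\bm{W}_t-\bm{X}$ into a tangential contraction term, a RIP-perturbation term, and a curvature term $-\mathcal{P}_{\mathbb{T}_t^{\perp}}\bm{X}$.

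The device that makes the preconditioner tractable is the identity, valid for every $\bm{H}$, that on $\mathbb{T}_t$ one has $\widetilde{\mathcal{P}}_{\mathbb{T}_t}\W^{-1}\bm{H}=(\mathcal{P}_{\mathbb{T}_t}\W\mathcal{P}_{\mathbb{T}_t})^{-1}\mathcal{P}_{\mathbb{T}_t}\bm{H}$, which follows because $\langle\widetilde{\mathcal{P}}_{\mathbb{T}_t}\W^{-1}\bm{H},\bm{Y}\rangle_{\W}=\langle\W^{-1}\bm{H},\bm{Y}\rangle_{\W}=\langle\bm{H},\bm{Y}\rangle$ for all $\bm{Y}\in\mathbb{T}_t$. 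Writing $\bm{G}_t=\bm{E}_t+(\mathcal{A}^*\mathcal{A}-\mathcal{I})\bm{E}_t$ and applying the identity, the tangential part of $\bm{W}_t-\bm{X}$ becomes $\bigl[\mathcal{I}-\alpha_t(\mathcal{P}_{\mathbb{T}_t}\W\mathcal{P}_{\mathbb{T}_t})^{-1}\bigr]\mathcal{P}_{\mathbb{T}_t}\bm{E}_t$ plus the perturbation $-\alpha_t(\mathcal{P}_{\mathbb{T}_t}\W\mathcal{P}_{\mathbb{T}_t})^{-1}\mathcal{P}_{\mathbb{T}_t}(\mathcal{A}^*\mathcal{A}-\mathcal{I})\bm{E}_t$. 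The first factor is contractive once $\mathcal{P}_{\mathbb{T}_t}\W\mathcal{P}_{\mathbb{T}_t}$ is well conditioned and $\alpha_t$ is taken near the harmonic mean of its extreme eigenvalues; the perturbation is bounded by $\alpha_t\|(\mathcal{P}_{\mathbb{T}_t}\W\mathcal{P}_{\mathbb{T}_t})^{-1}\|\,\delta_{3r}\|\bm{E}_t\|_F$ via the standard RIP corollary $|\langle(\mathcal{A}^*\mathcal{A}-\mathcal{I})\bm{M},\bm{N}\rangle|\le\delta_{3r}\|\bm{M}\|_F\|\bm{N}\|_F$ for matrices of total rank at most $3r$.

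The core quantitative step is to bound the spectrum of $\W$ on $\mathbb{T}_t$. Since $\W$ acts entrywise and is Frobenius-self-adjoint with eigenvalues $(\bm{L}_t)_{ii}^{1/4}(\bm{R}_t)_{jj}^{1/4}$, eigenvalue interlacing gives $\cond{\mathcal{P}_{\mathbb{T}_t}\W\mathcal{P}_{\mathbb{T}_t}}\le\cond{\W}$, and the latter is controlled by $1+\|\bm{G}_t\|_{\vee}^2/\epsilon_t$ because $(\bm{L}_t)_{ii}=\epsilon_t+\|\bm{G}_t(i,:)\|_2^2\in[\epsilon_t,\epsilon_t+\|\bm{G}_t\|_{\vee}^2]$. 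I would tie these data-driven weights to the error through the row/column comparison $\bigl|\,\|\bm{G}_t(i,:)\|_2-\|\bm{E}_t(i,:)\|_2\,\bigr|\le\delta_{3r}\|\bm{E}_t\|_F$, obtained by testing $\bm{G}_t$ against rank-one matrices $\bm{e}_i\bm{w}^T$ and invoking RIP of order $3r$ (and symmetrically for columns). Choosing $\epsilon_t$ and $\alpha_t$ as prescribed functions of $\bm{G}_t$ — for instance $\epsilon_t$ comparable to $\|\bm{G}_t\|_{\vee}^2$ and $\alpha_t$ from the extreme diagonals of $\bm{L}_t,\bm{R}_t$, all computable from $\bm{G}_t$ alone — forces $\cond{\mathcal{P}_{\mathbb{T}_t}\W\mathcal{P}_{\mathbb{T}_t}}=O(1)$ and $\alpha_t\|(\mathcal{P}_{\mathbb{T}_t}\W\mathcal{P}_{\mathbb{T}_t})^{-1}\|=O(1)$, so the first term contracts by a universal constant strictly below $1$ and the perturbation is $O(\delta_{3r})\|\bm{E}_t\|_F$. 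For the curvature term I would use the manifold estimate $\|\mathcal{P}_{\mathbb{T}_t^{\perp}}\bm{X}\|_F\le\|\bm{E}_t\|_F^2/\sigma_{\min}(\bm{X}_t)$.

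Finally I would close the induction. The spectral initialization $\bm{X}_0=\mathcal{H}_r(\mathcal{A}^*\mathcal{A}\bm{X})$ satisfies $\|\bm{X}_0-\bm{X}\|_F\le C\delta_{3r}\|\bm{X}\|_F\le C\delta_{3r}\sqrt{r}\,\kappa\,\sigma_{\min}(\bm{X})$ (the standard IHT-from-zero bound, which already needs RIP of order $3r$ since $\mathcal{H}_r(\mathcal{A}^*\mathcal{A}\bm{X})-\bm{X}$ has rank at most $2r$). Hence the hypothesis $\delta_{3r}\le(50+100\sqrt{r}\kappa)^{-1}$ places $\bm{X}_0$ in a basin $\|\bm{E}_0\|_F\le\rho\,\sigma_{\min}(\bm{X})$ for a small absolute $\rho$; Weyl's inequality then keeps $\sigma_{\min}(\bm{X}_t)\ge(1-\rho)\sigma_{\min}(\bm{X})$, so the curvature term is at most $\rho(1-\rho)^{-1}\|\bm{E}_t\|_F$ and the basin is preserved under a contraction. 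Summing the three contributions and multiplying by the factor-$2$ retraction bound yields $\|\bm{E}_{t+1}\|_F\le\mu\|\bm{E}_t\|_F$ with $\mu\le0.995$ throughout. The main obstacle is the third paragraph: unlike canonical RGD, the contraction constant is dictated by the conditioning of the \emph{data-driven} operator $\mathcal{P}_{\mathbb{T}_t}\W\mathcal{P}_{\mathbb{T}_t}$, so the argument must simultaneously (i) prove via the row/column RIP comparison that the weights built from $\bm{G}_t$ faithfully track those of $\bm{E}_t$, and (ii) exhibit a choice of $\epsilon_t,\alpha_t$ depending only on $\bm{G}_t$ that provably caps this conditioning — while the factor $\sqrt{r}\kappa$ in the RIP threshold is forced by the spectral-initialization bound rather than by the per-iteration contraction.
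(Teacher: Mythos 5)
Your plan follows essentially the same route as the paper: the same three-term decomposition of $\bm{W}_t-\bm{X}$ into a tangential contraction, a curvature term, and a cross term; the same strategy of controlling everything through the equivalence $\nu_t\|\cdot\|_F^2\le\|\cdot\|_{\W}^2\le\mu_t\|\cdot\|_F^2$ with $\nu_t=\epsilon_t^{1/2}$, $\mu_t=(\epsilon_t+\|\bm{G}_t\|_\vee^2)^{1/2}$; the same harmonic-mean step size and the choice $\epsilon_t\sim\|\bm{G}_t\|_\vee^2$ to cap the conditioning; and the same induction from the spectral-initialization bound, whose $\sqrt{r}\kappa$ you correctly identify as the source of the RIP threshold. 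Your bookkeeping differs in two harmless ways (you retract with the Frobenius factor $2$ rather than the paper's weighted-norm factor $1+\sqrt{\rho_t}$, and you rewrite $\widetilde{\mathcal{P}}_{\mathbb{T}_t}\W^{-1}$ as $(\mathcal{P}_{\mathbb{T}_t}\W\mathcal{P}_{\mathbb{T}_t})^{-1}\mathcal{P}_{\mathbb{T}_t}$ and split $\mathcal{A}^*\mathcal{A}=\mathcal{I}+(\mathcal{A}^*\mathcal{A}-\mathcal{I})$, where the paper keeps $\mathcal{A}^*\mathcal{A}$ intact and sandwiches $\|\mathcal{A}\widetilde{\mathcal{P}}_{\mathbb{T}_t}\bm{Z}\|_2^2$ with RIP). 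Your row/column comparison $|\,\|\bm{G}_t(i,:)\|_2-\|\bm{E}_t(i,:)\|_2|\le\delta_{3r}\|\bm{E}_t\|_F$ is not needed at all: the paper never asks the weights to track the error, only that $\epsilon_t\le l_{t,i}\le\epsilon_t+\|\bm{G}_t\|_\vee^2$, which with $\epsilon_t=\|\bm{G}_t\|_\vee^2$ pins $\rho_t=\mu_t/\nu_t=\sqrt{2}$ exactly.

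Two soft spots are worth naming. First, the decisive step is arithmetic you defer: the paper's base contraction (at $\delta_{3r}=0$) is already $0.895$ after multiplying the bracket $0.343+1.83\delta_{2r}$ by the norm-conversion prefactor $\rho_t+\sqrt{\rho_t}\approx 2.60$, so there is only about $0.1$ of slack before $0.995$. An argument that stops at ``$\cond{\mathcal{P}_{\mathbb{T}_t}\W\mathcal{P}_{\mathbb{T}_t}}=O(1)$'' does not establish the stated $\mu\le 0.995$; you must commit to the specific $\epsilon_t,\alpha_t$ and run the numbers, including the conversion factors $\sqrt{\rho_t}$ that your Frobenius-norm contraction of the tangential operator will pick up. Second, your perturbation bound tests $(\mathcal{A}^*\mathcal{A}-\mathcal{I})\bm{E}_t$ (rank up to $2r$) against tangent elements (rank up to $2r$), which as written needs a RIP constant of order $4r$, not $3r$; the paper stays at $3r$ by applying its polarization lemma only to the $\W$-orthogonal pair $\bigl((\mathcal{I}-\widetilde{\mathcal{P}}_{\mathbb{T}_t})\bm{X},\,\widetilde{\mathcal{P}}_{\mathbb{T}_t}\bm{Z}\bigr)$ of ranks $r$ and $2r$, and by bounding the tangential term through $\mathrm{rank}(\widetilde{\mathcal{P}}_{\mathbb{T}_t}\bm{Z})\le 2r$ alone. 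Relatedly, the curvature term produced by the algorithm is $(\mathcal{I}-\widetilde{\mathcal{P}}_{\mathbb{T}_t})\bm{X}$ with the \emph{weighted} projector, so the canonical estimate $\|\mathcal{P}_{\mathbb{T}_t^{\perp}}\bm{X}\|_F\le\|\bm{E}_t\|_F^2/\sigma_{\min}$ must be replaced by its weighted analogue (the paper's Lemma~\ref{lem3.6}), which carries additional factors $\cond{\bm{L}_t^{\frac38}}\cond{\bm{R}_t^{-\frac14}}$ that also have to be absorbed by the choice of $\epsilon_t$; if you instead split canonically you must check that the resulting cross terms are still covered by your RIP lemma. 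None of these is fatal, but all of them are exactly where the stated constants live.
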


\cref{convergence} demonstrates that the RIP constant $\delta_{3r}\lesssim\frac{1}{\sqrt{r}\kappa}$ is a sufficient condition for exact recovery of PRGD. When the entries of the sensing matrices follow i.i.d. sub-Gaussian distribution with mean $0$ and variance $\frac1m$, this requirement on the RIP constant is met if $m\gtrsim \kappa^2 r^2 \max\{n_1,n_2\}$. Therefore, PRGD requires $O(\kappa^2 r^2 \max\{n_1,n_2\})$ samples for exact recovery, which is optimal in the matrix dimension and is of the same order as many non-convex algorithms such as RGD and IHT. 

The rest of this section provides the proof of \cref{convergence}. First, we present some supporting lemmas in \cref{sec:lemma}, followed by the main proof in \cref{sec:proofmain}.

\subsection{Supporting Lemmas}\label{sec:lemma}

Our analysis is based on the weighted inner product $\langle \cdot, \cdot \rangle_{\mathcal{W}_t}$ defined in \eqref{eq:weightedinnerprod} and its induced norm $\| \cdot \|_{\mathcal{W}_t}$. In our analysis, we require some basic inequalities related to this weighted inner product and norm, which play a key role in our analysis.  

We first establish an equivalence relation between the weighted norm $\| \cdot \|_{\mathcal{W}_t}$ and the standard norm $\| \cdot \|_{F}$ in the following lemma. 
\begin{lemma}
\label{lem3.1}
For any $t\in\mathbb{N}$ and $\bm{Z} \in \mathbb{R}^{n_1 \times n_2}$, we have
\begin{equation}\label{Euqi}
\nu_t \| \bm{Z} \|_{F}^2  \leq \| \bm{Z} \|_{\mathcal{W}_t}^2 \leq  \mu_t \| \bm{Z} \|_{F}^2,  
\end{equation}
where $\nu_t = \epsilon_t^{\frac12} $ and $\mu_t = \left( \epsilon_t + \| \bm{G}_t \|_{ \vee }^2 \right)^{\frac12} $ with $\| \cdot \|_{\vee}$ defined in \eqref{eq:vnorm}.
\end{lemma}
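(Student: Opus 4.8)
The plan is to exploit the fact that both $\bm{L}_t$ and $\bm{R}_t$ are diagonal, so their fourth roots $\preLt$ and $\preRt$ are diagonal as well, and the weighting operator $\W$ acts entrywise. First I would record the diagonal entries explicitly: by \eqref{eq:LtRt} together with the observation (made just after \eqref{eq:weightedinnerprod}) that the $i$-th diagonal entry of $\mathrm{diag}(\bm{G}_t\bm{G}_t^T)$ is $\|\bm{G}_t(i,:)\|_2^2$ and the $j$-th diagonal entry of $\mathrm{diag}(\bm{G}_t^T\bm{G}_t)$ is $\|\bm{G}_t(:,j)\|_2^2$, the $i$-th diagonal entry of $\preLt$ is $\ell_i=(\epsilon_t+\|\bm{G}_t(i,:)\|_2^2)^{1/4}$ and the $j$-th diagonal entry of $\preRt$ is $\rho_j=(\epsilon_t+\|\bm{G}_t(:,j)\|_2^2)^{1/4}$. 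Since $(\preLt\bm{Z}\preRt)_{ij}=\ell_i Z_{ij}\rho_j$, the definition of the weighted norm in \eqref{eq:weightedinnerprod} yields the key identity $\|\bm{Z}\|_{\W}^2=\langle\preLt\bm{Z}\preRt,\bm{Z}\rangle=\sum_{i,j}\ell_i\rho_j Z_{ij}^2$.

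With this expression in hand, both inequalities in \eqref{Euqi} reduce to uniform scalar bounds on the separable weights $\ell_i\rho_j$. For the lower bound I would use $\ell_i\geq\epsilon_t^{1/4}$ and $\rho_j\geq\epsilon_t^{1/4}$, valid because the row and column norms are nonnegative, so $\ell_i\rho_j\geq\epsilon_t^{1/2}=\nu_t$; factoring this constant out of the sum gives $\|\bm{Z}\|_{\W}^2\geq\nu_t\|\bm{Z}\|_F^2$. For the upper bound I would invoke the definition of $\|\cdot\|_{\vee}$ in \eqref{eq:vnorm}, which gives $\|\bm{G}_t(i,:)\|_2^2\leq\|\bm{G}_t\|_{\vee}^2$ and $\|\bm{G}_t(:,j)\|_2^2\leq\|\bm{G}_t\|_{\vee}^2$ for every $i,j$; hence $\ell_i,\rho_j\leq(\epsilon_t+\|\bm{G}_t\|_{\vee}^2)^{1/4}$ and therefore $\ell_i\rho_j\leq(\epsilon_t+\|\bm{G}_t\|_{\vee}^2)^{1/2}=\mu_t$, which gives $\|\bm{Z}\|_{\W}^2\leq\mu_t\|\bm{Z}\|_F^2$.

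There is no genuine obstacle here; the only thing to get right is the bookkeeping that converts the matrix-weighted norm into the entrywise sum $\sum_{i,j}\ell_i\rho_j Z_{ij}^2$, and the recognition that the extreme values of the product weight $\ell_i\rho_j$ are controlled by $\epsilon_t$ from below and by the $\vee$-norm of the gradient from above. Once the diagonal structure of $\bm{L}_t$ and $\bm{R}_t$ is made explicit, the whole argument is a short, purely algebraic computation.
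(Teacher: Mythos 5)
Your proposal is correct and follows essentially the same route as the paper: both write $\| \bm{Z} \|_{\mathcal{W}_t}^2$ as the entrywise sum $\sum_{i,j} l_{t,i}^{1/4} r_{t,j}^{1/4} Z_{ij}^2$ using the diagonal structure of $\bm{L}_t$ and $\bm{R}_t$, then bound the separable weights below by $\epsilon_t^{1/2}$ and above by $(\epsilon_t + \| \bm{G}_t \|_{\vee}^2)^{1/2}$. There is nothing to add.
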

\begin{proof}
Let $l_{t,i}$ and $r_{t,i}$ be the $i$-th diagonal of $\bm{L}_t$ and $\bm{R}_t$, respectively. By the definitions of $\bm{L}_t = \epsilon_t \bm{I} + \diag(\bm{G}_t\bm{G}_t^{T})$ and $\bm{R}_t = \epsilon_t \bm{I} + \diag(\bm{G}_t^{T}\bm{G}_t)$, we have
$$
l_{t,i}=\epsilon_t+\|\bm{G}_{t}(i,:)\|_2^2,\quad r_{t,i}=\epsilon_t+\|\bm{G}_{t}(:,i)\|_2^2,\qquad\forall~i,
$$
where $\bm{G}_{t}(i,:)$ and $\bm{G}_{t}(:,i)$ are the $i$-th row and $i$-th column of $\bm{G}_t$, respectively. Furthermore, from the definition of $\| \cdot \|_{\mathcal{W}_t}$, we know
\begin{equation}\label{eq:defWtnorm}
\| \bm{Z} \|_{\mathcal{W}_t}^2 = \big\langle \bm{L}_t^{\frac14} \bm{Z} \bm{R}_t^{\frac14},  \bm{Z} \big\rangle
= \sum_{i, j} l_{t,i}^{\frac14} r_{t,j}^{\frac14}  Z_{ij}^2  \quad \textmd{for~any} \quad \bm{Z}\in\mathbb{R}^{n_1\times n_2}.
\end{equation}
To obtain $\nu_t$ and $\mu_t$ in \eqref{Euqi}, we only need to find lower and upper bounds of  $l_{t,i}$ and $r_{t,i}$ for all $i$.
\begin{itemize}
\item For the lower bound $\nu_t$, we use the trivial lower bounds $l_{t,i}\geq\epsilon_t$ and $r_{t,i}\geq \epsilon_t$ for all $i$. Thus, from \eqref{eq:defWtnorm}, we have
$$
\| \bm{Z}\|_{\mathcal{W}_t}^2\geq \epsilon_t^{\frac12}\|\bm{Z}\|_F^2.
$$

\item For the upper bound $\mu_t$, we use
$$
\max\{l_{t,i}, r_{t,i}\}\leq \epsilon_t + \| \bm{G}_t \|_{\vee}^2,   \quad  \forall ~i.
$$
Plugging this into \eqref{eq:defWtnorm}, we obtain 
$$
\| \bm{Z}\|_{\mathcal{W}_t}^2\leq\left( \epsilon_t + \| \bm{G}_t \|_{\vee}^2 \right)^{\frac12}\|\bm{Z}\|_F^2.
$$
\end{itemize}
\end{proof}

In order to complete our convergence proof, we require estimates for the weighted inner product $\langle\cdot,\cdot\rangle_{\bm{L}_t^{\frac14}}$ and norm $\|\cdot\|_{\bm{L}_t^{\frac14}}$ (and similarly for $\langle\cdot,\cdot\rangle_{\bm{R}_t^{\frac14}}$ and $\|\cdot\|_{\bm{R}_t^{\frac14}}$) in $\mathbb{R}^{n_1}$, as defined in \eqref{eq:weightedinnerprodRn1}. We introduce the matrix operator norm $\| \bm{A}\|_{\bm{L}_t^{\frac14}} = \sup_{\| \bm{x} \| \neq 0} \frac{\| \bm{Ax}\|_{\bm{L}_t^{\frac14}}}{\| \bm{x}\|_{\bm{L}_t^{\frac14}}}$ for any matrix in $\mathbb{R}^{n_1\times n_1}$ (and similarly for $\|\cdot\|_{\bm{R}_t^{\frac14}}$). We state the following two lemmas for these two norms.
\begin{lemma}
\label{lem3.3}
Let $\bm{A},\bm{B} \in \mathbb{R}^{n_1 \times n_1}$. Then, we have
\begin{equation*}
\| \bm{AB}\|_{\bm{L}_t^{\frac14}} \leq \cond{\bm{L}_t^{\frac18}} \cdot \| \bm{A}\|_{\bm{L}_t^{\frac14}}\| \bm{B}\|_2.
\end{equation*} 
\end{lemma}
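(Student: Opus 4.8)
The plan is to recognize the weighted operator norm $\|\cdot\|_{\bm{L}_t^{1/4}}$ as an ordinary spectral norm after a similarity transformation, and then conclude by submultiplicativity of the spectral norm together with a standard conjugation bound. The key observation is that $\bm{L}_t$ is diagonal with strictly positive diagonal entries (each entry is $\epsilon_t + \|\bm{G}_t(i,:)\|_2^2 > 0$), so $\bm{P} := \bm{L}_t^{\frac18}$ is a well-defined symmetric positive definite matrix with $\bm{P}^{-1}$ existing.

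First I would rewrite the weighted vector norm. For any $\bm{x}\in\mathbb{R}^{n_1}$, the definition in \eqref{eq:weightedinnerprodRn1} gives $\|\bm{x}\|_{\bm{L}_t^{\frac14}}^2 = \langle \bm{L}_t^{\frac14}\bm{x},\bm{x}\rangle = \|\bm{L}_t^{\frac18}\bm{x}\|_2^2 = \|\bm{P}\bm{x}\|_2^2$, using that $\bm{P}$ is symmetric. Substituting $\bm{y}=\bm{P}\bm{x}$ (equivalently $\bm{x}=\bm{P}^{-1}\bm{y}$) into the definition of the weighted operator norm then yields
\begin{equation*}
\| \bm{A}\|_{\bm{L}_t^{\frac14}} = \sup_{\bm{x}\neq 0}\frac{\|\bm{P}\bm{A}\bm{x}\|_2}{\|\bm{P}\bm{x}\|_2} = \sup_{\bm{y}\neq 0}\frac{\|\bm{P}\bm{A}\bm{P}^{-1}\bm{y}\|_2}{\|\bm{y}\|_2} = \|\bm{P}\bm{A}\bm{P}^{-1}\|_2 ,
\end{equation*}
so the weighted operator norm of any matrix equals the spectral norm of its conjugation by $\bm{P}$.

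Next I would apply this identity to $\bm{AB}$ and factor the conjugation as $\bm{P}(\bm{AB})\bm{P}^{-1} = (\bm{P}\bm{A}\bm{P}^{-1})(\bm{P}\bm{B}\bm{P}^{-1})$. Submultiplicativity of $\|\cdot\|_2$ gives $\|\bm{AB}\|_{\bm{L}_t^{\frac14}} \leq \|\bm{P}\bm{A}\bm{P}^{-1}\|_2\,\|\bm{P}\bm{B}\bm{P}^{-1}\|_2 = \|\bm{A}\|_{\bm{L}_t^{\frac14}}\,\|\bm{P}\bm{B}\bm{P}^{-1}\|_2$. It then remains to control the last factor by the unweighted spectral norm of $\bm{B}$; the bound $\|\bm{P}\bm{B}\bm{P}^{-1}\|_2 \leq \|\bm{P}\|_2\,\|\bm{B}\|_2\,\|\bm{P}^{-1}\|_2 = \frac{\sigma_{\max}(\bm{P})}{\sigma_{\min}(\bm{P})}\|\bm{B}\|_2 = \cond{\bm{L}_t^{\frac18}}\,\|\bm{B}\|_2$ finishes the argument. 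There is no genuine obstacle here; the only point requiring care is the change-of-variables step identifying the weighted operator norm with the spectral norm of the $\bm{P}$-conjugate, after which everything reduces to elementary properties of the spectral norm. I would present the three displayed manipulations above in sequence and then combine them into the stated inequality.
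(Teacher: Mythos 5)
Your proposal is correct and follows essentially the same route as the paper: both identify $\| \bm{A}\|_{\bm{L}_t^{\frac14}}$ with the spectral norm of the conjugate $\bm{L}_t^{\frac18}\bm{A}\bm{L}_t^{-\frac18}$ via the same change of variables, factor the conjugate of $\bm{AB}$ as a product of conjugates, and bound the $\bm{B}$-factor by $\|\bm{L}_t^{\frac18}\|_2\|\bm{B}\|_2\|\bm{L}_t^{-\frac18}\|_2 = \cond{\bm{L}_t^{\frac18}}\|\bm{B}\|_2$. No substantive differences.
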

\begin{proof}
Using the definition of $\| \cdot \|_{\bm{L}_t^{\frac14}}$, we get
\begin{equation*}
\begin{aligned}
\| \bm{A} \|_{\bm{L}_t^{\frac14}} &= \sup_{\| \bm{x}  \| \neq 0} \frac{\|\bm{Ax}\|_{\bm{L}_t^{\frac14}}}{\|\bm{x}\|_{\bm{L}_t^{\frac14}}}
= \sup_{\| \bm{x} \| \neq 0} \frac{\|\bm{L}_t^{\frac18} \bm{Ax}\|_2}{\|\bm{L}_t^{\frac18} \bm{x}\|_2}
=\sup_{\| \bm{y} \| \neq 0} \frac{\|\bm{L}_t^{\frac18} \bm{A} \bm{L}_t^{-\frac18} \bm{y}\|_2}{\| \bm{y}\|_2}
=\| \bm{L}_t^{\frac18} \bm{A} \bm{L}_t^{-\frac18} \|_2.
\end{aligned}
\end{equation*}
Thus, a direct calculation gives 
\begin{equation*}
\begin{aligned}
\| \bm{AB} \|_{\bm{L}_t^{\frac14}} &= \| \bm{L}_t^{\frac18} \bm{AB} \bm{L}_t^{-\frac18} \|_2
%&= \| L_t^{\frac18} AL_t^{-\frac18} L_t^{\frac18} B L_t^{-\frac18} \|_2^2\\
\leq \| \bm{L}_t^{\frac18} \bm{A} \bm{L}_t^{-\frac18} \|_2 \| \bm{L}_t^{\frac18} \|_2 \| \bm{B}\|_2 \| \bm{L}_t^{-\frac18} \|_2
= \cond{\bm{L}_t^{\frac18}} \cdot\| \bm{A} \|_{\bm{L}_t^{\frac14}} \| \bm{B} \|_2.
\end{aligned}
\end{equation*}
This completes the proof.
\end{proof}
\begin{lemma}
\label{lem3.4}
Let $\bm{A} \in \mathbb{R}^{n_1 \times n_1}$, $\bm{B} \in \mathbb{R}^{n_1 \times n_2}$, and $\bm{C} \in \mathbb{R}^{n_2 \times n_2}$. Then, we have
\begin{equation}\label{Le3.11}
\| \bm{AB} \|_{\mathcal{W}_t} \leq \| \bm{A} \|_{\bm{L}_t^{\frac14}} \| \bm{B}\|_{\mathcal{W}_t} \quad \textmd{and} \quad \| \bm{BC} \|_{\mathcal{W}_t} \leq \cond{\bm{R}_t^{-\frac14}}\cdot \| \bm{B} \|_{\mathcal{W}_t}  \| \bm{C} \|_{\bm{R}_t^{\frac14}}.
\end{equation} 
\end{lemma}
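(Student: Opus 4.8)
The plan is to reduce both inequalities to the mixed submultiplicativity of the Frobenius and spectral norms, namely $\|\bm{M}\bm{N}\|_F\le\|\bm{M}\|_2\|\bm{N}\|_F$ and $\|\bm{M}\bm{N}\|_F\le\|\bm{M}\|_F\|\bm{N}\|_2$. The key observation that makes this possible is that, since $\bm{L}_t$ and $\bm{R}_t$ are positive diagonal matrices, the weighted norm in \eqref{eq:defWtnorm} factors as an ordinary Frobenius norm after a symmetric two-sided scaling:
\begin{equation*}
\|\bm{Z}\|_{\mathcal{W}_t}=\big\|\bm{L}_t^{\frac18}\bm{Z}\bm{R}_t^{\frac18}\big\|_F,\qquad\forall\,\bm{Z}\in\mathbb{R}^{n_1\times n_2}.
\end{equation*}
In the same spirit as the proof of \cref{lem3.3}, I would also record the conjugation formulas $\|\bm{A}\|_{\bm{L}_t^{\frac14}}=\|\bm{L}_t^{\frac18}\bm{A}\bm{L}_t^{-\frac18}\|_2$ and $\|\bm{C}\|_{\bm{R}_t^{\frac14}}=\|\bm{R}_t^{\frac18}\bm{C}\bm{R}_t^{-\frac18}\|_2$, which identify the operator norms on the right-hand sides of \eqref{Le3.11} with spectral norms of conjugated matrices.

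For the first inequality I would write $\|\bm{A}\bm{B}\|_{\mathcal{W}_t}=\|\bm{L}_t^{\frac18}\bm{A}\bm{B}\bm{R}_t^{\frac18}\|_F$ and insert the identity $\bm{L}_t^{-\frac18}\bm{L}_t^{\frac18}$ between $\bm{A}$ and $\bm{B}$, so the product splits as $(\bm{L}_t^{\frac18}\bm{A}\bm{L}_t^{-\frac18})(\bm{L}_t^{\frac18}\bm{B}\bm{R}_t^{\frac18})$. Applying $\|\bm{M}\bm{N}\|_F\le\|\bm{M}\|_2\|\bm{N}\|_F$ with $\bm{M}=\bm{L}_t^{\frac18}\bm{A}\bm{L}_t^{-\frac18}$ bounds this by $\|\bm{L}_t^{\frac18}\bm{A}\bm{L}_t^{-\frac18}\|_2\,\|\bm{L}_t^{\frac18}\bm{B}\bm{R}_t^{\frac18}\|_F=\|\bm{A}\|_{\bm{L}_t^{\frac14}}\|\bm{B}\|_{\mathcal{W}_t}$, which is exactly the first claim. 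No condition-number factor appears here because the scaling $\bm{L}_t^{\frac18}$ produced by the left factor lands on the correct side to match the definition of $\|\bm{A}\|_{\bm{L}_t^{\frac14}}$.

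For the second inequality the same strategy gives $\|\bm{B}\bm{C}\|_{\mathcal{W}_t}=\|(\bm{L}_t^{\frac18}\bm{B}\bm{R}_t^{\frac18})(\bm{R}_t^{-\frac18}\bm{C}\bm{R}_t^{\frac18})\|_F\le\|\bm{B}\|_{\mathcal{W}_t}\,\|\bm{R}_t^{-\frac18}\bm{C}\bm{R}_t^{\frac18}\|_2$ after applying $\|\bm{M}\bm{N}\|_F\le\|\bm{M}\|_F\|\bm{N}\|_2$. The main obstacle is that the conjugation $\bm{R}_t^{-\frac18}\bm{C}\bm{R}_t^{\frac18}$ appearing here has the \emph{opposite} sign to the conjugation $\bm{R}_t^{\frac18}\bm{C}\bm{R}_t^{-\frac18}$ that represents $\|\bm{C}\|_{\bm{R}_t^{\frac14}}$, so the two cannot be identified directly. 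To bridge them I would factor $\bm{R}_t^{-\frac18}\bm{C}\bm{R}_t^{\frac18}=\bm{R}_t^{-\frac14}(\bm{R}_t^{\frac18}\bm{C}\bm{R}_t^{-\frac18})\bm{R}_t^{\frac14}$ and bound its spectral norm by $\|\bm{R}_t^{-\frac14}\|_2\|\bm{R}_t^{\frac14}\|_2\,\|\bm{C}\|_{\bm{R}_t^{\frac14}}$. Since $\bm{R}_t^{\frac14}$ is diagonal and positive definite, $\|\bm{R}_t^{\frac14}\|_2\|\bm{R}_t^{-\frac14}\|_2=\cond{\bm{R}_t^{\frac14}}=\cond{\bm{R}_t^{-\frac14}}$, which produces exactly the stated factor and completes the proof. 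This asymmetry between the two bounds is expected: it reflects that the right-multiplication scaling $\bm{R}_t^{\frac18}$ falls on the side opposite to the one dictated by the definition of $\|\bm{C}\|_{\bm{R}_t^{\frac14}}$, whereas the left-multiplication scaling in the first bound does not.
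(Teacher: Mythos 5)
Your proof is correct and follows essentially the same route as the paper: both split $\bm{L}_t^{\frac18}(\cdot)\bm{R}_t^{\frac18}$ by inserting identities, apply the mixed Frobenius--spectral submultiplicativity, and handle the sign mismatch in the second bound via the conjugation $\bm{R}_t^{-\frac18}\bm{C}\bm{R}_t^{\frac18}=\bm{R}_t^{-\frac14}\bigl(\bm{R}_t^{\frac18}\bm{C}\bm{R}_t^{-\frac18}\bigr)\bm{R}_t^{\frac14}$, which yields the factor $\cond{\bm{R}_t^{-\frac14}}$. No issues.
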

\begin{proof}
Using the definition of $\|\cdot\|_{\mathcal{W}_t}$ and a property of the Frobenius norm, we get
\begin{equation*}
\begin{aligned}
\|\bm{AB} \|_{\mathcal{W}_t} &= \| \bm{L}_t^{\frac18} \bm{AB} \bm{R}_t^{\frac18}\|_{F}
%&= \| L_t^{\frac18} A L_t^{-\frac18} L_t^{\frac18} B R_t^{\frac18} \|_{F}\\
\leq \| \bm{L}_t^{\frac18} \bm{A} \bm{L}_t^{-\frac18} \|_2 \| \bm{L}_t^{\frac18} \bm{B} \bm{R}_t^{\frac18}\|_{F}
= \| \bm{L}_t^{\frac18} \bm{A} \bm{L}_t^{-\frac18} \|_2 \| \bm{B} \|_{\mathcal{W}_t}
= \| \bm{A} \|_{\bm{L}_t^{\frac14}} \| \bm{B} \|_{\mathcal{W}_t},
\end{aligned}
\end{equation*}
which is the first inequality in \eqref{Le3.11}. For the second inequality, we note that
\begin{equation*}
\begin{aligned}
\| \bm{BC}  \|_{\mathcal{W}_t} &= \| \bm{L}_t^{\frac18} \bm{BC} \bm{R}_t^{\frac18} \|_{F}
%&= \| L_t^{\frac18} BR_t^{\frac18} R_t^{-\frac18} C R_t^{\frac18} \|_{F}\\
\leq \| \bm{B} \|_{\mathcal{W}_t} \| \bm{R}_t^{-\frac18} \bm{C} \bm{R}_t^{\frac18} \|_{2}.
\end{aligned}
\end{equation*}
Denoting $\bm{C}_0 = \bm{R}_t^{\frac18} \bm{C} \bm{R}_t^{-\frac18}$, we have $\|\bm{C}_0\|_2=\| \bm{C}  \|_{\bm{R}_t^{\frac14}}$ and
$$
\| \bm{R}_t^{-\frac18} \bm{C} \bm{R}_t^{\frac18} \|_{2} = \| \bm{R}_t^{-\frac14} \bm{C}_0 \bm{R}_t^{\frac14} \|_{2}\leq \| \bm{R}_t^{-\frac14}\|_2\|\bm{C}_0\|_2\|\bm{R}_t^{\frac14} \|_{2}= \cond{\bm{R}_t^{-\frac14}}\cdot \| \bm{C}_0 \|_{2}.
$$
Therefore, we obtain
\begin{equation*}
\begin{aligned}
\| \bm{BC}  \|_{\mathcal{W}_t} \leq \cond{\bm{R}_t^{-\frac14}}\cdot \| \bm{B} \|_{\mathcal{W}_t} \| \bm{R}_t^{\frac18} \bm{C} \bm{R}_t^{-\frac18} \|_{2}
= \cond{\bm{R}_t^{-\frac14}} \cdot\| \bm{B} \|_{\mathcal{W}_t} \| \bm{C}  \|_{\bm{R}_t^{\frac14}}.
\end{aligned}
\end{equation*}
\end{proof}

Based on the above inequalities, we can provide the following estimates for the column subspaces of two matrices: 
\begin{lemma}
\label{lem3.5}
Let $\bm{X}_t = \bm{U}_t \bm{\Sigma}_t \bm{V}_t^T$ and $\bm{X} = \bm{U} \bm{\Sigma} \bm{V}^T$ be compact SVDs of $\bm{X}_t$ and $\bm{X}$, respectively. Then, we have
\begin{equation}\label{eq5}
\| \widetilde{\bm{U}}_t \widetilde{\bm{U}}_t^T \bm{L}_t^{\frac14}  - \widetilde{\bm{U}} \widetilde{\bm{U}}^T \bm{L}_t^{\frac14} \|_{\bm{L}_t^{\frac14}} \leq \frac{\cond{\bm{L}_t^{\frac38}}}{\sigma_{\min}(\bm{X})} \| \bm{X}_t - \bm{X}\|_2,
\end{equation} 
where $\widetilde{\bm{U}}_t= \bm{U}_t \big( \bm{U}_t^T \bm{L}_t^{\frac14} \bm{U}_t \big)^{-\frac12}$ and $\widetilde{\bm{U}}= \bm{U} \big( \bm{U}^T \bm{L}_t^{\frac14} \bm{U} \big)^{-\frac12}$ are respectively orthonormalizations of $\bm{U}_t$ and $\bm{U}$ under the inner product $\langle\cdot,\cdot\rangle_{\bm{L}_t}$.
\end{lemma}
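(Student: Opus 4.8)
The plan is to recognize the two matrices in the statement as \emph{weighted orthogonal projectors} and then conjugate everything down to a standard spectral-norm estimate. First I would observe that $\widetilde{\bm{U}}_t\widetilde{\bm{U}}_t^T\bm{L}_t^{\frac14}$ is the orthogonal projector onto $\mathrm{col}(\bm{X}_t)=\mathrm{span}(\bm{U}_t)$ with respect to the inner product $\langle\cdot,\cdot\rangle_{\bm{L}_t^{\frac14}}$: since $\widetilde{\bm{U}}_t^T\bm{L}_t^{\frac14}\widetilde{\bm{U}}_t=\bm{I}$ by the definition of $\widetilde{\bm{U}}_t$, the operator $P_t:=\widetilde{\bm{U}}_t\widetilde{\bm{U}}_t^T\bm{L}_t^{\frac14}$ is idempotent and $\langle\cdot,\cdot\rangle_{\bm{L}_t^{\frac14}}$-self-adjoint, and likewise $P:=\widetilde{\bm{U}}\widetilde{\bm{U}}^T\bm{L}_t^{\frac14}$ projects onto $\mathrm{col}(\bm{X})=\mathrm{span}(\bm{U})$. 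The left-hand side of \eqref{eq5} is thus exactly $\|P_t-P\|_{\bm{L}_t^{\frac14}}$.

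Next I would use the conjugation identity already established inside the proof of \cref{lem3.3}, namely $\|\bm{A}\|_{\bm{L}_t^{\frac14}}=\|\bm{L}_t^{\frac18}\bm{A}\bm{L}_t^{-\frac18}\|_2$. Applying it to $\bm{A}=P_t-P$ and setting $\bm{Q}_t:=\bm{L}_t^{\frac18}\widetilde{\bm{U}}_t$ and $\bm{Q}:=\bm{L}_t^{\frac18}\widetilde{\bm{U}}$, a short computation gives $\bm{L}_t^{\frac18}P_t\bm{L}_t^{-\frac18}=\bm{Q}_t\bm{Q}_t^T$ and $\bm{L}_t^{\frac18}P\bm{L}_t^{-\frac18}=\bm{Q}\bm{Q}^T$. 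Because $\bm{Q}_t^T\bm{Q}_t=\widetilde{\bm{U}}_t^T\bm{L}_t^{\frac14}\widetilde{\bm{U}}_t=\bm{I}$ (and similarly for $\bm{Q}$), these are genuine \emph{standard} orthogonal projectors, onto $\mathrm{col}(\bm{L}_t^{\frac18}\bm{X}_t)$ and $\mathrm{col}(\bm{L}_t^{\frac18}\bm{X})$ respectively. Hence \eqref{eq5} is reduced to the Euclidean estimate $\|\bm{Q}_t\bm{Q}_t^T-\bm{Q}\bm{Q}^T\|_2$.

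For this I would invoke the classical projection-perturbation bound: for two rank-$r$ matrices $\bm{M}_t,\bm{M}$, the orthogonal projectors onto their column spaces satisfy $\|\mathcal{P}_{\bm{M}_t}-\mathcal{P}_{\bm{M}}\|_2\le\|\bm{M}_t-\bm{M}\|_2/\sigma_r(\bm{M})$. This follows from the identity $\|\mathcal{P}_{\bm{M}_t}-\mathcal{P}_{\bm{M}}\|_2=\|(\bm{I}-\mathcal{P}_{\bm{M}_t})\mathcal{P}_{\bm{M}}\|_2$ for equal-rank projectors together with $(\bm{I}-\mathcal{P}_{\bm{M}_t})\bm{M}_t=\bm{0}$, so that $(\bm{I}-\mathcal{P}_{\bm{M}_t})\bm{M}=(\bm{I}-\mathcal{P}_{\bm{M}_t})(\bm{M}-\bm{M}_t)$ and every vector in $\mathrm{col}(\bm{M})$ can be written (via its row-space coefficients) as $\bm{M}\bm{c}$ with $\|\bm{c}\|\le\|\bm{M}\bm{c}\|/\sigma_r(\bm{M})$. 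Taking $\bm{M}_t=\bm{L}_t^{\frac18}\bm{X}_t$ and $\bm{M}=\bm{L}_t^{\frac18}\bm{X}$ gives $\|\bm{M}_t-\bm{M}\|_2\le\|\bm{L}_t^{\frac18}\|_2\|\bm{X}_t-\bm{X}\|_2$, while restricting to the row space $\mathrm{span}(\bm{V})$ of $\bm{X}$ yields $\sigma_r(\bm{L}_t^{\frac18}\bm{X})\ge\sigma_{\min}(\bm{L}_t^{\frac18})\,\sigma_{\min}(\bm{X})$.

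Combining the three displays produces the factor $\|\bm{L}_t^{\frac18}\|_2/\sigma_{\min}(\bm{L}_t^{\frac18})=\cond{\bm{L}_t^{\frac18}}$ multiplying $\|\bm{X}_t-\bm{X}\|_2/\sigma_{\min}(\bm{X})$, which is dominated by $\cond{\bm{L}_t^{\frac38}}$ since $\bm{L}_t$ is diagonal and positive definite; the stated bound then follows (in fact with room to spare). The main obstacle I anticipate is not any single inequality but the careful structural verification that the two operators really are orthogonal projectors in their respective weighted inner products and that conjugation by $\bm{L}_t^{\frac18}$ turns them into \emph{standard} projectors with the correct column spaces $\mathrm{col}(\bm{L}_t^{\frac18}\bm{X}_t)$ and $\mathrm{col}(\bm{L}_t^{\frac18}\bm{X})$; once this reduction is in place, the projection-perturbation step and the singular-value bookkeeping are routine.
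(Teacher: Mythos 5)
Your proof is correct, and it reaches the stated bound by a genuinely different (and arguably cleaner) route than the paper. The paper likewise begins by identifying $\widetilde{\bm{U}}_t\widetilde{\bm{U}}_t^T\bm{L}_t^{\frac14}$ and $\widetilde{\bm{U}}\widetilde{\bm{U}}^T\bm{L}_t^{\frac14}$ as $\langle\cdot,\cdot\rangle_{\bm{L}_t^{\frac14}}$-orthogonal projectors and invoking the equal-rank identity of \cite[Theorem 2.6.1]{GL}, but it then stays entirely in the weighted norm: it factors $\widetilde{\bm{U}}\widetilde{\bm{U}}^T=\bm{X}\bm{V}\bm{\Sigma}^{-1}\big(\bm{U}^T\bm{L}_t^{\frac14}\bm{U}\big)^{-1}\bm{U}^T$, uses $\big(\bm{I}-\widetilde{\bm{U}}_t\widetilde{\bm{U}}_t^T\bm{L}_t^{\frac14}\big)\bm{X}_t=\bm{0}$ to replace $\bm{X}$ by $\bm{X}-\bm{X}_t$, and then applies \cref{lem3.3} followed by a chain of submultiplicative spectral-norm estimates, including $\|(\bm{U}^T\bm{L}_t^{\frac14}\bm{U})^{-1}\|_2\le 1/\lambda_{\min}(\bm{L}_t^{\frac14})$; each stage contributes a power of the condition number, which is exactly how the factor $\cond{\bm{L}_t^{\frac38}}=\cond{\bm{L}_t^{\frac18}}\cdot\cond{\bm{L}_t^{\frac14}}$ accumulates. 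You instead conjugate by $\bm{L}_t^{\frac18}$ (using the identity $\|\bm{A}\|_{\bm{L}_t^{\frac14}}=\|\bm{L}_t^{\frac18}\bm{A}\bm{L}_t^{-\frac18}\|_2$ already established inside the proof of \cref{lem3.3}) so that both operators become \emph{standard} orthogonal projectors onto $\mathrm{col}(\bm{L}_t^{\frac18}\bm{X}_t)$ and $\mathrm{col}(\bm{L}_t^{\frac18}\bm{X})$, and then apply the classical projection-perturbation bound to the pre-weighted matrices; the same cancellation $(\bm{I}-\mathcal{P}_{\bm{M}_t})\bm{M}_t=\bm{0}$ is hidden inside that classical bound, but the weighting now enters only once, through $\|\bm{L}_t^{\frac18}\|_2$ and $\sigma_{\min}(\bm{L}_t^{\frac18})$. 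The payoff is a strictly sharper constant, $\cond{\bm{L}_t^{\frac18}}$ in place of $\cond{\bm{L}_t^{\frac38}}$ (the two coincide only when $\bm{L}_t$ is a multiple of the identity), which would propagate to slightly better constants in \cref{lem3.6} and the main theorem at essentially no extra cost. Both arguments rely on the same one delicate point, namely the equal-rank identity reducing $\|P_t-P\|$ to $\|(\bm{I}-P_t)P\|$, so your proof is not weaker there; the remaining bookkeeping ($\sigma_r(\bm{L}_t^{\frac18}\bm{X})\ge\lambda_{\min}(\bm{L}_t^{\frac18})\sigma_{\min}(\bm{X})$, etc.) is routine and correct as you state it.
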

\begin{proof}
We follow the same arguments as  \cite[Lemma 4.2]{WCCL}. Since $\widetilde{\bm{U}}_t \widetilde{\bm{U}}_t^T \bm{L}_t^{\frac14} $ and $\widetilde{\bm{U}} \widetilde{\bm{U}}^T  \bm{L}_t^{\frac14}$  are orthogonal projections under the inner product $\langle \cdot, \cdot \rangle_{\bm{L}_t^{\frac14}}$ onto the column subspace of $\bm{X}_t$ and $\bm{X}$ respectively, \cite[Theorem 2.6.1]{GL} implies
$$
\big\| \widetilde{\bm{U}}_t \widetilde{\bm{U}}_t^T \bm{L}_t^{\frac14}  - \widetilde{\bm{U}} \widetilde{\bm{U}}^T  \bm{L}_t^{\frac14} \big\|_{\bm{L}_t^{\frac14}} = \big\| \widetilde{\bm{U}} \widetilde{\bm{U}}^T \bm{L}_t^{\frac14} \big( \bm{I} -  \widetilde{\bm{U}}_t \widetilde{\bm{U}}_t^T \bm{L}_t^{\frac14} \big) \big\|_{\bm{L}_t^{\frac14}} =  \big\|  \big( \bm{I} -  \widetilde{\bm{U}}_t \widetilde{\bm{U}}_t^T \bm{L}_t^{\frac14} \big) \widetilde{\bm{U}} \widetilde{\bm{U}}^T\bm{L}_t^{\frac14} \big\|_{\bm{L}_t^{\frac14}}.
$$
Using the definitions of $\widetilde{\bm{U}}$ and $\widetilde{\bm{U}}_t$, we have 
$$
\widetilde{\bm{U}} \widetilde{\bm{U}}^T=\bm{U}\big( \bm{U}^T \bm{L}_t^{\frac14} \bm{U} \big)^{-1} \bm{U}^T=\bm{X}\bm{V}\bm{\Sigma}^{-1}\big( \bm{U}^T \bm{L}_t^{\frac14} \bm{U} \big)^{-1} \bm{U}^T
$$
and
$$
\big(\bm{I} - \widetilde{\bm{U}}_t \widetilde{\bm{U}}_t^T \bm{L}_t^{\frac14} \big) \bm{X}_t =\bm{0}.
$$
Altogether, we obtain
\begin{equation*}
\begin{aligned} 
\big\| \widetilde{\bm{U}}_t \widetilde{\bm{U}}_t^T \bm{L}_t^{\frac14}  - \widetilde{\bm{U}} \widetilde{\bm{U}}^T  \bm{L}_t^{\frac14} \big\|_{\bm{L}_t^{\frac14}} 
&=\big\| \big(\bm{I} - \widetilde{\bm{U}}_t \widetilde{\bm{U}}_t^T \bm{L}_t^{\frac14} \big)\bm{X} \bm{V} \bm{\Sigma}^{-1} \big( \bm{U}^T \bm{L}_t^{\frac14} \bm{U} \big)^{-1} \bm{U}^T \bm{L}_t^{\frac14} \big\|_{\bm{L}_t^{\frac14}} \\
&= \big\| \big(\bm{I} - \widetilde{\bm{U}}_t \widetilde{\bm{U}}_t^T \bm{L}_t^{\frac14} \big) (\bm{X}- \bm{X}_t) \bm{V} \bm{\Sigma}^{-1} \big( \bm{U}^T \bm{L}_t^{\frac14} \bm{U} \big)^{-1} \bm{U}^T \bm{L}_t^{\frac14} \big\|_{\bm{L}_t^{\frac14}}\\
&\leq \cond{\bm{L}_t^{\frac18}}\cdot  \| \bm{I} - \widetilde{\bm{U}}_t \widetilde{\bm{U}}_t^T \bm{L}_t^{\frac14}  \|_{\bm{L}_t^{\frac14}}\| (\bm{X} - \bm{X}_t )\bm{V} \bm{\Sigma}^{-1} ( \bm{U}^T \bm{L}_t^{\frac14} \bm{U} )^{-1} \bm{U}^T \bm{L}_t^{\frac14} \|_{2}\\
&\leq \cond{\bm{L}_t^{\frac18}}\cdot \| \bm{X} - \bm{X}_t \|_2 \| \bm{V} \|_2 \| \bm{\Sigma}^{-1} \|_2 \| \big(\bm{U}^T \bm{L}_t^{\frac14} \bm{U} \big)^{-1} \|_2 \| \bm{U}^T \|_2 \| \bm{L}_t^{\frac14} \|_2\\
&\leq \cond{\bm{L}_t^{\frac18}}\cdot \frac{\| \bm{X} - \bm{X}_t \|_2}{\sigma_{\min}(\bm{X})} \frac{\lambda_{\max} (\bm{L}_t^{\frac14})}{\lambda_{\min}(\bm{L}_t^{\frac14})}= \frac{\cond{\bm{L}_t^{\frac38}}}{\sigma_{\min}(\bm{X})} \| \bm{X} - \bm{X}_t \|_2,
\end{aligned} 
\end{equation*}
where in the three inequalities we have respectively used Lemma \ref{lem3.3}, the fact that  $\bm{I} - \widetilde{\bm{U}}_t \widetilde{\bm{U}}_t^T \bm{L}_t^{\frac14}$ is an orthogonal projector under the weighted inner product, and the inequality $\| \big(\bm{U}^T \bm{L}_t^{\frac14} \bm{U} \big)^{-1} \|_2\leq\frac{1}{\lambda_{\min}(\bm{L}_t^{\frac14})}$ following from
$$
\begin{aligned}
\| \big(\bm{U}^T \bm{L}_t^{\frac14} \bm{U} \big)^{-1} \|_2 &=\frac{1}{\min_{\| \bm{x} \|_2 =1} \langle\bm{U}^T \bm{L}_t^{\frac14} \bm{U} \bm{x}, \bm{x}\rangle}
\leq \frac{1}{\lambda_{\min} (\bm{L}_t^{\frac14})\cdot \min_{\| \bm{x}\| =1} \langle \bm{U}\bm{x}, \bm{U}\bm{x} \rangle }= \frac{1}{\lambda_{\min} (\bm{L}_t^{\frac14})}.
\end{aligned}
$$
\end{proof}

The following lemma presents an important property of the projection $\widetilde{\mathcal{P}}_{\mathbb{T}_t}$, which incorporates second-order information of the smooth low-rank matrix manifold, as highlighted in \cite{WCCL}. This lemma is crucial for the proof of our main result.    

\begin{lemma}
\label{lem3.6}
We have
\begin{equation*}
\begin{aligned}
\|\big(\mathcal{I} - \widetilde{\mathcal{P}}_{\mathbb{T}_t}  \big) \bm{X} \|_{\mathcal{W}_t} 
\leq \frac{\mu_t^{\frac{5}{4}}}{\nu_t^{\frac{7}{4}}\cdot \sigma_{\min}(\bm{X})} \| \bm{X} - \bm{X}_t \|_{\mathcal{W}_t}^2,
\end{aligned}
\end{equation*}
where the constants $\nu_t$ and $\mu_t$ are from \eqref{Euqi}.
\end{lemma}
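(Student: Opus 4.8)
The plan is to reduce the estimate to the two facts already in hand: the subspace perturbation bound of \cref{lem3.5} and the weighted submultiplicativity of \cref{lem3.4}. The starting point is the closed form of the weighted projector in \eqref{proj}, which expresses $\widetilde{\mathcal{P}}_{\mathbb{T}_t}$ through a left-acting idempotent $P_U=\widetilde{\bm{U}}_t\widetilde{\bm{U}}_t^T\bm{L}_t^{\frac14}$ and a right-acting idempotent $P_V=\bm{R}_t^{\frac14}\widetilde{\bm{V}}_t\widetilde{\bm{V}}_t^T$. A direct expansion of $\mathcal{I}-P_U-P_V+P_UP_V$ gives the factorization
$$
(\mathcal{I} - \widetilde{\mathcal{P}}_{\mathbb{T}_t})\bm{X} = (\mathcal{I} - P_U)\,\bm{X}\,(\mathcal{I} - P_V).
$$
I would record at the outset the two vanishing identities $(\mathcal{I} - P_U)\bm{X}_t = \bm{0}$ and $\bm{X}_t(\mathcal{I} - P_V) = \bm{0}$, which hold because $P_U$ (resp.\ $P_V$) fixes the column (resp.\ row) space of $\bm{X}_t$; these are the algebraic shadow of $\bm{X}_t\in\mathbb{T}_t$.

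The heart of the argument is to manufacture a genuinely \emph{quadratic} dependence on $\bm{X}-\bm{X}_t$, rather than the trivial linear bound coming from the contractivity of the projector. I extract one factor of the error from each side. On the right, the second vanishing identity gives at once $\bm{X}(\mathcal{I}-P_V) = (\bm{X}-\bm{X}_t)(\mathcal{I}-P_V)$. On the left, letting $P_U^{\star}=\widetilde{\bm{U}}\widetilde{\bm{U}}^T\bm{L}_t^{\frac14}$ be the analogous projector onto the column space of the ground truth $\bm{X}$ (so $P_U^{\star}\bm{X}=\bm{X}$), the idempotent relation $(\mathcal{I}-P_U)P_U=\bm{0}$ yields $(\mathcal{I}-P_U)\bm{X} = (\mathcal{I}-P_U)P_U^{\star}\bm{X} = (\mathcal{I}-P_U)(P_U^{\star}-P_U)\bm{X}$. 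Combining the two, I obtain the key factorization
$$
(\mathcal{I} - \widetilde{\mathcal{P}}_{\mathbb{T}_t})\bm{X} = (\mathcal{I}-P_U)(P_U^{\star}-P_U)(\bm{X}-\bm{X}_t)(\mathcal{I}-P_V),
$$
in which the projector difference $P_U^{\star}-P_U$ carries one power of $\|\bm{X}-\bm{X}_t\|$ (through the subspace perturbation) and the explicit factor $\bm{X}-\bm{X}_t$ carries the other.

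From here the estimate is bookkeeping in the weighted norms. I would peel off the two left factors with the first inequality of \cref{lem3.4}: since $\mathcal{I}-P_U$ is an orthogonal projector in $\langle\cdot,\cdot\rangle_{\bm{L}_t^{1/4}}$ we have $\|\mathcal{I}-P_U\|_{\bm{L}_t^{1/4}}=1$, and \cref{lem3.5} supplies $\|P_U^{\star}-P_U\|_{\bm{L}_t^{1/4}}\le \frac{\cond{\bm{L}_t^{3/8}}}{\sigma_{\min}(\bm{X})}\|\bm{X}-\bm{X}_t\|_2$. The right factor $\mathcal{I}-P_V$ is handled by the second inequality of \cref{lem3.4}; it is likewise a weighted orthogonal projector of unit operator norm, so this step contributes only the condition-number factor $\cond{\bm{R}_t^{-1/4}}$, which is precisely where the non-canonical weighting enters. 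This leaves
$$
\|(\mathcal{I} - \widetilde{\mathcal{P}}_{\mathbb{T}_t})\bm{X}\|_{\mathcal{W}_t}\le \frac{\cond{\bm{L}_t^{3/8}}\,\cond{\bm{R}_t^{-1/4}}}{\sigma_{\min}(\bm{X})}\,\|\bm{X}-\bm{X}_t\|_2\,\|\bm{X}-\bm{X}_t\|_{\mathcal{W}_t}.
$$

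Finally I convert everything into the $\mathcal{W}_t$ norm and into $\mu_t,\nu_t$. \cref{lem3.1} gives $\|\bm{X}-\bm{X}_t\|_2\le\|\bm{X}-\bm{X}_t\|_F\le\nu_t^{-1/2}\|\bm{X}-\bm{X}_t\|_{\mathcal{W}_t}$, and the diagonal entries of $\bm{L}_t,\bm{R}_t$ lying in $[\nu_t^2,\mu_t^2]$ give $\cond{\bm{L}_t^{3/8}}\le(\mu_t/\nu_t)^{3/4}$ and $\cond{\bm{R}_t^{-1/4}}\le(\mu_t/\nu_t)^{1/2}$. Collecting exponents, $(\mu_t/\nu_t)^{3/4}(\mu_t/\nu_t)^{1/2}\nu_t^{-1/2}=\mu_t^{5/4}\nu_t^{-7/4}$, which reproduces the claimed constant. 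The main obstacle is the second paragraph: discovering the factorization that splits the single residual into one projector-difference factor and one explicit error factor. Once that identity is in place, the rest is the routine but exponent-sensitive propagation of the asymmetric weighting through \cref{lem3.3,lem3.4}, where the care lies in matching the fractional powers of $\bm{L}_t$ and $\bm{R}_t$ so that the final exponents land on exactly $5/4$ and $7/4$.
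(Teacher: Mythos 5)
Your proof is correct and follows essentially the same route as the paper's: the same factorization of $(\mathcal{I}-\widetilde{\mathcal{P}}_{\mathbb{T}_t})\bm{X}$ into a projector difference times $(\bm{X}-\bm{X}_t)$ times $(\bm{I}-\bm{R}_t^{\frac14}\widetilde{\bm{V}}_t\widetilde{\bm{V}}_t^T)$, followed by \cref{lem3.4}, \cref{lem3.5}, and \cref{lem3.1} with the identical condition-number and exponent bookkeeping. The only cosmetic difference is that you re-derive the key identity from the idempotent decomposition $(\mathcal{I}-P_U)\bm{X}(\mathcal{I}-P_V)$ (picking up a harmless extra factor $\mathcal{I}-P_U$ of unit weighted operator norm), whereas the paper imports the identity \eqref{eq6} directly from the argument of \cite[Lemma 4.1]{WCCL}.
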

\begin{proof}
Let $\widetilde{\mathcal{P}}_{\mathbb{T}}$ be the orthogonal projector onto
$\mathcal{\mathbb{T}}$, the tangent space of $\mathcal{M}_r$ at $\bm{X}$, under the inner product $\langle \cdot, \cdot \rangle_{\W}$ defined in \eqref{eq:weightedinnerprod}. Then, for any $\bm{Z} \in \mathbb{R}^{n_1 \times n_2}$, we have
\begin{equation}\label{proj-0X}
\widetilde{\mathcal{P}}_{\mathbb{T}} (\bm{Z}) = \widetilde{\bm{U}} \widetilde{\bm{U}}^T \bm{L}_t^{\frac14} \bm{Z} + \bm{Z} \bm{R}_t^{\frac14}\widetilde{\bm{V}} \widetilde{\bm{V}}^T - \widetilde{\bm{U}} \widetilde{\bm{U}}^T \bm{L}_t^{\frac14} \bm{Z} \bm{R}_t^{\frac14} \widetilde{\bm{V}} \widetilde{\bm{V}}^T,
\end{equation}
where $\bm{X}=\bm{U}\bm{\Sigma}\bm{V}^T$ is a compact SVD of $\bm{X}$, $\widetilde{\bm{U}}= \bm{U} \big( \bm{U}^T \bm{L}_t^{\frac14} \bm{U} \big)^{-\frac12}$, and $\widetilde{\bm{V}} = \bm{V} \big( \bm{V}^T \bm{R}_t^{\frac14} \bm{V} \big)^{-\frac12}$. Obviously, $\widetilde{\mathcal{P}}_{\mathbb{T}}(\bm{X}) = \bm{X}$. 

Using the similar argument as in \cite[Lemma 4.1]{WCCL}, we obtain
\begin{equation}\label{eq6}
\begin{aligned}
( \mathcal{I} - \widetilde{\mathcal{P}}_{\mathbb{T}_t} ) \bm{X} =( \widetilde{\mathcal{P}}_{\mathbb{T}} - \widetilde{\mathcal{P}}_{\mathbb{T}_t} ) \bm{X}  =\big(\widetilde{\bm{U}} \widetilde{\bm{U}}^{T} \bm{L}_t^{\frac14} - \widetilde{\bm{U}}_t \widetilde{\bm{U}}_t^{T} \bm{L}_t^{\frac14} \big) ( \bm{X}- \bm{X}_t) \big( \bm{I} - \bm{R}_t^{\frac14} \widetilde{\bm{V}}_t \widetilde{\bm{V}}_t^{T} \big).
\end{aligned}
\end{equation}
This,  together with Lemmas \ref{lem3.1}, \ref{lem3.4}, and \ref{lem3.5}, yields that 
\begin{equation*}
\begin{aligned}
\|\big( \mathcal{I} - \widetilde{\mathcal{P}}_{\mathbb{T}_t} \big) \bm{X} \|_{\mathcal{W}_t} & \leq \cond{\bm{R}_t^{-\frac14}}\cdot \| \big( \widetilde{\bm{U}} \widetilde{\bm{U}}^{T} \bm{L}_t^{\frac14} - \widetilde{\bm{U}}_t \widetilde{\bm{U}}_t^{T} \bm{L}_t^{\frac14} \big) \|_{\bm{L}_t^{\frac14}} \| \bm{X}- \bm{X}_t \|_{\mathcal{W}_t} \| \bm{I} - \bm{R}_t^{\frac14} \widetilde{\bm{V}}_t \widetilde{\bm{V}}_t^{T} \|_{\bm{R}_t^{\frac14}}\\
&\leq \frac{\cond{\bm{R}_t^{-\frac14}}\cdot \cond{\bm{L}_t^{\frac38}}}{\sigma_{\min}(\bm{X})} \| \bm{X}_t - \bm{X} \|_2 \| \bm{X}_t - \bm{X} \|_{\mathcal{W}_t}\\
&\leq \frac{\cond{\bm{R}_t^{-\frac14}} \cdot\cond{\bm{L}_t^{\frac38}}}{\sigma_{\min}(\bm{X})} \| \bm{X}_t - \bm{X} \|_F \| \bm{X}_t - \bm{X} \|_{\mathcal{W}_t}\\
&\leq  \frac{ \cond{\bm{R}_t^{-\frac14}}\cdot\cond{\bm{L}_t^{\frac38}}}{ \nu_t^{\frac12}\cdot \sigma_{\min}(\bm{X})} \| \bm{X}_t - \bm{X} \|_{\mathcal{W}_t}^2
\leq\frac{\mu_t^{\frac{5}{4}}}{\nu_t^{\frac{7}{4}}\cdot\sigma_{\min}(\bm{X})} \| \bm{X} - \bm{X}_t \|_{\mathcal{W}_t}^2.
\end{aligned}
\end{equation*}
This completes the proof.
\end{proof}

The following two lemmas are related to RIP.
\begin{lemma}\label{Lem3.7}
Let $\bm{Z}_1$ and $\bm{Z}_2$ be two matrices of rank $r_1$ and  $r_2$, respectively, with $r_1+ r_2 \leq \min\{n_1, n_2\}$ and satisfying $\left\langle \bm{Z}_1,  \bm{Z}_2 \right\rangle_{\mathcal{W}_t} = 0$. Assume that $\mathcal{A}$ satisfies RIP \eqref{RIP} with constant $\delta_{r_1+r_2}$. Then, we have
\begin{equation*}
\big| \big\langle \mathcal{A} \bm{Z}_1, \mathcal{A} \bm{Z}_2 \big\rangle \big| \leq  \frac{1}{2}  \big((\nu_t^{-1} - \mu_t^{-1}) + (\nu_t^{-1} + \mu_t^{-1}) \delta_{r_1+ r_2} \big) \| \bm{Z}_1 \|_{\mathcal{W}_t} \| \bm{Z}_2 \|_{\mathcal{W}_t},
\end{equation*}
where $\nu_t$ and $\mu_t$ are from Lemma \ref{Euqi}. 
\end{lemma}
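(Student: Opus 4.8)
The plan is to reduce the claim to a \emph{weighted} restricted isometry property and then run the standard parallelogram (polarization) argument, but carried out entirely in the inner product $\langle\cdot,\cdot\rangle_{\mathcal{W}_t}$ rather than the Frobenius one. The crucial point is that $\bm{Z}_1$ and $\bm{Z}_2$ are orthogonal \emph{with respect to} $\langle\cdot,\cdot\rangle_{\mathcal{W}_t}$, so the cross terms must be cancelled in that metric. Peeling off the Frobenius cross term $\langle\bm{Z}_1,\bm{Z}_2\rangle$ and bounding it separately is lossy: it would only deliver a coefficient $\delta_{r_1+r_2}\nu_t^{-1}$ on $\delta_{r_1+r_2}$, rather than the sharper $\tfrac12(\nu_t^{-1}+\mu_t^{-1})\delta_{r_1+r_2}$ demanded by the statement.

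First I would combine the RIP \eqref{RIP} with the norm-equivalence of \cref{lem3.1}. For every $\bm{Z}$ with $\mathrm{rank}(\bm{Z})\le r_1+r_2$, RIP gives $(1-\delta_{r_1+r_2})\|\bm{Z}\|_F^2\le\|\mathcal{A}\bm{Z}\|_2^2\le(1+\delta_{r_1+r_2})\|\bm{Z}\|_F^2$, while \cref{lem3.1} gives $\mu_t^{-1}\|\bm{Z}\|_{\mathcal{W}_t}^2\le\|\bm{Z}\|_F^2\le\nu_t^{-1}\|\bm{Z}\|_{\mathcal{W}_t}^2$. Chaining these two pairs of inequalities yields the weighted RIP
\[
C_l\,\|\bm{Z}\|_{\mathcal{W}_t}^2\ \le\ \|\mathcal{A}\bm{Z}\|_2^2\ \le\ C_u\,\|\bm{Z}\|_{\mathcal{W}_t}^2,\qquad \mathrm{rank}(\bm{Z})\le r_1+r_2,
\]
with $C_u=(1+\delta_{r_1+r_2})\,\nu_t^{-1}$ and $C_l=(1-\delta_{r_1+r_2})\,\mu_t^{-1}$.

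Next, by homogeneity of the claimed inequality I would normalize so that $\|\bm{Z}_1\|_{\mathcal{W}_t}=\|\bm{Z}_2\|_{\mathcal{W}_t}=1$. Since $\langle\bm{Z}_1,\bm{Z}_2\rangle_{\mathcal{W}_t}=0$, the parallelogram law in the weighted metric gives $\|\bm{Z}_1\pm\bm{Z}_2\|_{\mathcal{W}_t}^2=\|\bm{Z}_1\|_{\mathcal{W}_t}^2+\|\bm{Z}_2\|_{\mathcal{W}_t}^2=2$, and both $\bm{Z}_1\pm\bm{Z}_2$ have rank at most $r_1+r_2$. Applying the polarization identity $\langle\mathcal{A}\bm{Z}_1,\mathcal{A}\bm{Z}_2\rangle=\tfrac14\big(\|\mathcal{A}(\bm{Z}_1+\bm{Z}_2)\|_2^2-\|\mathcal{A}(\bm{Z}_1-\bm{Z}_2)\|_2^2\big)$, and using the weighted RIP (the upper bound on the first term, the lower bound on the second), gives $\langle\mathcal{A}\bm{Z}_1,\mathcal{A}\bm{Z}_2\rangle\le\tfrac14(2C_u-2C_l)=\tfrac{C_u-C_l}{2}$; exchanging the roles of the two terms yields the matching lower bound, hence $|\langle\mathcal{A}\bm{Z}_1,\mathcal{A}\bm{Z}_2\rangle|\le\tfrac{C_u-C_l}{2}$. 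Undoing the normalization reinserts the factor $\|\bm{Z}_1\|_{\mathcal{W}_t}\|\bm{Z}_2\|_{\mathcal{W}_t}$.

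Finally I would simplify the constant: $\tfrac{C_u-C_l}{2}=\tfrac12\big((1+\delta_{r_1+r_2})\nu_t^{-1}-(1-\delta_{r_1+r_2})\mu_t^{-1}\big)=\tfrac12\big((\nu_t^{-1}-\mu_t^{-1})+(\nu_t^{-1}+\mu_t^{-1})\delta_{r_1+r_2}\big)$, which is precisely the asserted bound. The only genuinely delicate step is the observation behind the first paragraph, namely that one should pass to the weighted RIP \emph{before} polarizing, so that the weighted orthogonality $\langle\bm{Z}_1,\bm{Z}_2\rangle_{\mathcal{W}_t}=0$ annihilates the cross terms cleanly; everything afterward is the routine polarization computation.
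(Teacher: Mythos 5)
Your proposal is correct and follows essentially the same route as the paper: normalize $\|\bm{Z}_1\|_{\mathcal{W}_t}=\|\bm{Z}_2\|_{\mathcal{W}_t}=1$, chain the RIP with the norm equivalence of \cref{lem3.1} to bound $\|\mathcal{A}(\bm{Z}_1\pm\bm{Z}_2)\|_2^2$ between $2\mu_t^{-1}(1-\delta_{r_1+r_2})$ and $2\nu_t^{-1}(1+\delta_{r_1+r_2})$ using the weighted orthogonality, and then polarize. Packaging the first step as a standalone ``weighted RIP'' is only a cosmetic difference from the paper's argument.
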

\begin{proof}
Without loss of generality, we assume $\| \bm{Z}_1 \|_{\mathcal{W}_t}= \| \bm{Z}_2 \|_{\mathcal{W}_t}=1$. According to the RIP and Lemma \ref{Euqi}, we have
\begin{equation*}
\begin{aligned}
\| \mathcal{A} (\bm{Z}_1 \pm \bm{Z}_2) \|_2^2 &\geq \big( 1 - \delta_{r_1+ r_2} \big) \| \bm{Z}_1 \pm \bm{Z}_2 \|_F^2
\geq \mu_t^{-1} \big( 1 - \delta_{r_1+ r_2} \big) \| \bm{Z}_1 \pm \bm{Z}_2 \|_{\mathcal{W}_t}^2.
\end{aligned}
\end{equation*}
Similarly,
\begin{equation*}
\begin{aligned}
\| \mathcal{A} (\bm{Z}_1 \pm \bm{Z}_2) \|_2^2 &\leq \big( 1 + \delta_{r_1+ r_2} \big) \| \bm{Z}_1 \pm \bm{Z}_2 \|_F^2
\leq \nu_t^{-1} \big( 1 + \delta_{r_1+ r_2} \big) \| \bm{Z}_1 \pm \bm{Z}_2 \|_{\mathcal{W}_t}^2.
\end{aligned}
\end{equation*}
Since  $\big\langle \bm{Z}_1, \bm{Z}_2\big\rangle_{\mathcal{W}_t} = 0$ implies $\| \bm{Z}_1 \pm \bm{Z}_2\|_{\mathcal{W}_t}^2 = 2$, the above two inequalities imply
\begin{equation*}
2 \mu_t^{-1}\big( 1 - \delta_{r_1+ r_2} \big) \leq \| \mathcal{A} (\bm{Z}_1 \pm \bm{Z}_2) \|_2^2 \leq 2 \nu_t^{-1} \big( 1 + \delta_{r_1+ r_2} \big).
\end{equation*}
Therefore, 
\begin{equation*}
\begin{aligned}
| \langle \mathcal{A} \bm{Z}_1, \mathcal{A} \bm{Z}_2 \rangle | &= \frac{1}{4} \big| \| \mathcal{A} (\bm{Z}_1 + \bm{Z}_2) \|_2^2 - \| \mathcal{A} (\bm{Z}_1 - \bm{Z}_2) \|_2^2 \big|
\leq \frac{1}{4} \big| 2\nu_t^{-1} ( 1 + \delta_{r_1+ r_2}) - 2\mu_t^{-1}( 1 - \delta_{r_1+ r_2}) \big|\\
&= \frac{1}{2} \left( (\nu_t^{-1} - \mu_t^{-1}) + ( \nu_t^{-1}+ \mu_t^{-1} ) \delta_{r_1+ r_2} \right).
\end{aligned}
\end{equation*}
\end{proof}

\begin{lemma}
\label{lem3.8}
Assume that $\mathcal{A}$ satisfies RIP \eqref{RIP} with constant $\delta_{3r}$. Then 
$$
\| \widetilde{\mathcal{P}}_{\mathbb{T}_t} \mathcal{W}_t^{-1} \mathcal{A}^*\mathcal{A} ( \mathcal{I} - \widetilde{\mathcal{P}}_{\mathbb{T}_t} ) \bm{X} \|_{\mathcal{W}_t} \leq \frac{1}{2} \big( (\nu_t^{-1} - \mu_t^{-1}) + ( \nu_t^{-1} + \mu_t^{-1} ) \delta_{3r} \big) ~~ \| ( \mathcal{I} - \widetilde{\mathcal{P}}_{\mathbb{T}_t} )\bm{X} \|_{\mathcal{W}_t}.
$$
\end{lemma}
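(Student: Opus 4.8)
The plan is to bound the $\mathcal{W}_t$-norm of a matrix that already lies in $\mathbb{T}_t$ by dualizing, thereby converting the quantity into an inner product to which Lemma~\ref{Lem3.7} can be applied. Write $\bm{E} = (\mathcal{I} - \widetilde{\mathcal{P}}_{\mathbb{T}_t})\bm{X}$ for the matrix appearing on the right-hand side. Since $\widetilde{\mathcal{P}}_{\mathbb{T}_t}\mathcal{W}_t^{-1}\mathcal{A}^*\mathcal{A}\bm{E}$ belongs to $\mathbb{T}_t$, I would start from the variational identity
$$
\big\| \widetilde{\mathcal{P}}_{\mathbb{T}_t}\mathcal{W}_t^{-1}\mathcal{A}^*\mathcal{A}\bm{E} \big\|_{\mathcal{W}_t} = \sup_{\bm{Z}\in\mathbb{T}_t,\ \|\bm{Z}\|_{\mathcal{W}_t}=1} \big\langle \widetilde{\mathcal{P}}_{\mathbb{T}_t}\mathcal{W}_t^{-1}\mathcal{A}^*\mathcal{A}\bm{E},\ \bm{Z} \big\rangle_{\mathcal{W}_t},
$$
which holds by Cauchy--Schwarz in the $\langle\cdot,\cdot\rangle_{\mathcal{W}_t}$ inner product restricted to the subspace $\mathbb{T}_t$.

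The next step is to simplify the inner product for a fixed unit test matrix $\bm{Z}\in\mathbb{T}_t$. Because $\widetilde{\mathcal{P}}_{\mathbb{T}_t}$ is the $\langle\cdot,\cdot\rangle_{\mathcal{W}_t}$-orthogonal projector onto $\mathbb{T}_t$, it is self-adjoint in that inner product and fixes $\bm{Z}$, so $\langle \widetilde{\mathcal{P}}_{\mathbb{T}_t}\mathcal{W}_t^{-1}\mathcal{A}^*\mathcal{A}\bm{E}, \bm{Z}\rangle_{\mathcal{W}_t} = \langle \mathcal{W}_t^{-1}\mathcal{A}^*\mathcal{A}\bm{E}, \bm{Z}\rangle_{\mathcal{W}_t}$. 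Unfolding the weighted inner product via $\langle \bm{A}, \bm{B}\rangle_{\mathcal{W}_t} = \langle \mathcal{W}_t\bm{A}, \bm{B}\rangle$ cancels the $\mathcal{W}_t^{-1}$, and the adjointness of $\mathcal{A}$ under the standard metric then yields $\langle \mathcal{A}^*\mathcal{A}\bm{E}, \bm{Z}\rangle = \langle \mathcal{A}\bm{E}, \mathcal{A}\bm{Z}\rangle$. At this point the target has been reduced to controlling $|\langle \mathcal{A}\bm{E}, \mathcal{A}\bm{Z}\rangle|$ uniformly over unit $\bm{Z}\in\mathbb{T}_t$.

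To invoke Lemma~\ref{Lem3.7} I must verify its two hypotheses. First, $\bm{E}$ and $\bm{Z}$ are $\mathcal{W}_t$-orthogonal: since $\bm{E}=(\mathcal{I}-\widetilde{\mathcal{P}}_{\mathbb{T}_t})\bm{X}$ lies in the $\mathcal{W}_t$-orthogonal complement of $\mathbb{T}_t$ while $\bm{Z}\in\mathbb{T}_t$, idempotence of $\widetilde{\mathcal{P}}_{\mathbb{T}_t}$ gives $\langle \bm{E}, \bm{Z}\rangle_{\mathcal{W}_t}=0$. Second, I need the ranks to add up to at most $3r$. Here $\bm{Z}\in\mathbb{T}_t$ has rank at most $2r$, while $\bm{E}$ has rank at most $r$: setting $P_U=\widetilde{\bm{U}}_t\widetilde{\bm{U}}_t^T\bm{L}_t^{\frac14}$ and $P_V=\bm{R}_t^{\frac14}\widetilde{\bm{V}}_t\widetilde{\bm{V}}_t^T$ in the projector formula \eqref{proj} gives the factorization $(\mathcal{I}-\widetilde{\mathcal{P}}_{\mathbb{T}_t})\bm{X}=(\bm{I}-P_U)\bm{X}(\bm{I}-P_V)$ (the same two-sided product structure already exploited in \eqref{eq6}), so $\mathrm{rank}(\bm{E})\le\mathrm{rank}(\bm{X})=r$. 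Thus the hypotheses hold with $r_1+r_2\le 3r$, and RIP of order $3r$ is available.

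Applying Lemma~\ref{Lem3.7} then gives $|\langle \mathcal{A}\bm{E}, \mathcal{A}\bm{Z}\rangle| \le \frac12\big((\nu_t^{-1}-\mu_t^{-1})+(\nu_t^{-1}+\mu_t^{-1})\delta_{3r}\big)\|\bm{E}\|_{\mathcal{W}_t}\|\bm{Z}\|_{\mathcal{W}_t}$; taking the supremum over unit $\bm{Z}$ and using $\|\bm{Z}\|_{\mathcal{W}_t}=1$ produces exactly the claimed inequality. The only genuinely non-routine point is the rank bookkeeping: one must recognize that $\mathcal{I}-\widetilde{\mathcal{P}}_{\mathbb{T}_t}$ factors as a two-sided product so that $\bm{E}$ has rank $r$ rather than the naive bound $3r$; otherwise the required RIP order would inflate to $5r$ and the final constant would no longer match the statement. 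Everything else is a careful but mechanical unwinding of the weighted inner product together with self-adjointness of the projection.
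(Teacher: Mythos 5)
Your proposal is correct and follows essentially the same route as the paper: dualize the $\mathcal{W}_t$-norm into a supremum of inner products, use self-adjointness of $\widetilde{\mathcal{P}}_{\mathbb{T}_t}$ and the definition of $\mathcal{W}_t$ to reduce to $\langle \mathcal{A}(\mathcal{I}-\widetilde{\mathcal{P}}_{\mathbb{T}_t})\bm{X}, \mathcal{A}\bm{Z}\rangle$, and invoke Lemma~\ref{Lem3.7} with ranks $r$ and $2r$ and $\mathcal{W}_t$-orthogonality. The only cosmetic difference is that you restrict the supremum to unit $\bm{Z}\in\mathbb{T}_t$, while the paper takes the supremum over all unit $\bm{Z}$ and uses $\widetilde{\mathcal{P}}_{\mathbb{T}_t}\bm{Z}$ as the second argument, together with $\|\widetilde{\mathcal{P}}_{\mathbb{T}_t}\bm{Z}\|_{\mathcal{W}_t}\leq 1$; the two are equivalent.
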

\begin{proof} 
A direct calculation gives
\begin{equation*}
\begin{split}
\|\widetilde{\mathcal{P}}_{\mathbb{T}_t} \mathcal{W}_t^{-1} \mathcal{A}^* \mathcal{A} ( \mathcal{I} - \widetilde{\mathcal{P}}_{\mathbb{T}_t} ) \bm{X} \|_{\mathcal{W}_t}
&= \sup_{\| \bm{Z} \|_{\mathcal{W}_t}=1}  \big| \langle\widetilde{\mathcal{P}}_{\mathbb{T}_t} \mathcal{W}_t^{-1} \mathcal{A}^* \mathcal{A}( \mathcal{I} - \widetilde{\mathcal{P}}_{\mathbb{T}_t} )\bm{X}, \bm{Z}\rangle_{\mathcal{W}_t} \big| \\
&=\sup_{\| \bm{Z} \|_{\mathcal{W}_t} = 1} | \langle   \mathcal{A} ( \mathcal{I} - \widetilde{\mathcal{P}}_{\mathbb{T}_t} )\bm{X}, \mathcal{A}\widetilde{\mathcal{P}}_{\mathbb{T}_t} \bm{Z} \rangle |.
\end{split}
\end{equation*}
We now apply Lemma \ref{Lem3.7} with $\bm{Z}_1=( \mathcal{I} - \widetilde{\mathcal{P}}_{\mathbb{T}_t} )\bm{X}$ and $\bm{Z}_2=\widetilde{\mathcal{P}}_{\mathbb{T}_t} \bm{Z}$. It can be easily checked that $\mathrm{rank}(\bm{Z}_1)=r$, $\mathrm{rank}(\bm{Z}_2)=2r$, and $\langle\bm{Z}_1,\bm{Z}_2\rangle_{\mathcal{W}_t}=0$. Thus, Lemma \ref{Lem3.7} gives
\begin{equation*}
\begin{aligned}
\|\widetilde{\mathcal{P}}_{\mathbb{T}_t} \mathcal{W}_t^{-1} \mathcal{A}^* \mathcal{A} \big( \mathcal{I} - \widetilde{\mathcal{P}}_{\mathbb{T}_t} \big) \bm{X} \|_{\mathcal{W}_t}
&\leq \sup_{\| \bm{Z} \|_{\mathcal{W}_t} = 1}  \frac{1}{2} \big( (\nu_t^{-1} - \mu_t^{-1}) + (\nu_t^{-1} + \mu_t^{-1}) \delta_{3r} \big) \| ( \mathcal{I} - \widetilde{\mathcal{P}}_{\mathbb{T}_t} )\bm{X} \|_{\mathcal{W}_t} \| \widetilde{\mathcal{P}}_{\mathbb{T}_t} \bm{Z}  \|_{\mathcal{W}_t}\\
&\leq \frac{1}{2} \big( (\nu_t^{-1} - \mu_t^{-1}) + (\nu_t^{-1} + \mu_t^{-1}) \delta_{3r} \big) \| ( \mathcal{I} - \widetilde{\mathcal{P}}_{\mathbb{T}_t} ) \bm{X} \|_{\mathcal{W}_t}. 
\end{aligned}
\end{equation*}
\end{proof}

\subsection{Proof of \cref{convergence}}
\label{sec:proofmain}
Now we give the proof of our main result \cref{convergence}.
\begin{proof}[Proof of \cref{convergence}]
Since $\bm{X}_{t+1}$ is the best rank-$r$ approximation of $\bm{W}_{t}$ under the Frobinus norm, using the equivalence between the Frobinus norm and the $\| \cdot \|_{\mathcal{W}_t}$ norm, we obtain
\begin{equation}\nonumber
\begin{aligned}
\| \bm{X}_{t+1} - \bm{X} \|_{\mathcal{W}_t} &\leq \| \bm{X}_{t+1} - \bm{W}_t \|_{\mathcal{W}_t} + \| \bm{W}_t - \bm{X} \|_{\mathcal{W}_t}\leq \sqrt{\mu_t} \| \bm{X}_{t+1} - \bm{W}_t \|_{F} + \| \bm{W}_t - \bm{X} \|_{\mathcal{W}_t}\\
&\leq (1+ \sqrt{\mu_t/\nu_t}) \| \bm{W}_t - \bm{X} \|_{\mathcal{W}_t} := (1+ \sqrt{\rho_t}) \| \bm{W}_t - \bm{X} \|_{\mathcal{W}_t}.
\end{aligned}
\end{equation}
Here and in what follows, we use $\rho_t = \frac{\mu_t}{\nu_t}$. Therefore, it suffices to estimate $\| \bm{W}_t - \bm{X} \|_{\mathcal{W}_t}$. Using \cref{PRGD} and the definition of $\mathcal{W}_t$, we can express $\bm{W}_t$ as
$$
\bm{W}_t = \bm{X}_t - \alpha_t \widetilde{\mathcal{P}}_{\mathbb{T}_t} \big( \bm{L}_t^{-\frac14} \bm{G}_t \bm{R}_t^{-\frac14} \big) = \bm{X}_t - \alpha_t \widetilde{\mathcal{P}}_{\mathbb{T}_t} \mathcal{W}_t^{-1} \bm{G}_t.
$$
Using the definition of $\bm{G}_t$,  we get
\begin{equation}\nonumber
\begin{aligned}
\|\bm{W}_t - \bm{X} \|_{\mathcal{W}_t} &= \| \bm{X}_t - \alpha_t \widetilde{\mathcal{P}}_{\mathbb{T}_t} \mathcal{W}_t^{-1} \bm{G}_t - \bm{X} \|_{\mathcal{W}_t}
= \| \bm{X}_t - \bm{X} - \alpha_t \widetilde{\mathcal{P}}_{\mathbb{T}_t} \mathcal{W}_t^{-1} \mathcal{A}^* \mathcal{A} (\bm{X}_t - \bm{X}) \|_{\mathcal{W}_t}\\
&\leq \| (\mathcal{I} - \alpha_t \widetilde{\mathcal{P}}_{\mathbb{T}_t} \mathcal{W}_t^{-1} \mathcal{A}^* \mathcal{A} \widetilde{\mathcal{P}}_{\mathbb{T}_t}) (\bm{X}_t - \bm{X}) \|_{\mathcal{W}_t} + \alpha_t \| \widetilde{\mathcal{P}}_{\mathbb{T}_t} \mathcal{W}_t^{-1} \mathcal{A}^* \mathcal{A} ( \mathcal{I} - \widetilde{\mathcal{P}}_{\mathbb{T}_t} ) (\bm{X}_t - \bm{X})\|_{\mathcal{W}_t}\\
&\leq \underbrace{\| (\widetilde{\mathcal{P}}_{\mathbb{T}_t} - \alpha_t \widetilde{\mathcal{P}}_{\mathbb{T}_t} \mathcal{W}_t^{-1} \mathcal{A}^* \mathcal{A} \widetilde{\mathcal{P}}_{\mathbb{T}_t}) (\bm{X}_t - \bm{X}) \|_{\mathcal{W}_t}}_{I_1} + \underbrace{\| (\mathcal{I} - \widetilde{\mathcal{P}}_{\mathbb{T}_t}) ( \bm{X}_t - \bm{X}) \|_{\mathcal{W}_t}}_{I_2} \\
&\quad  + \alpha_t \underbrace{\| \widetilde{\mathcal{P}}_{\mathbb{T}_t} \mathcal{W}_t^{-1} \mathcal{A}^* \mathcal{A} ( \mathcal{I} - \widetilde{\mathcal{P}}_{\mathbb{T}_t} ) (\bm{X}_t - \bm{X})\|_{\mathcal{W}_t}}_{I_3}.
\end{aligned}
\end{equation}
Choose
\begin{equation}\label{alpha-rho}
\alpha_t  = \frac{2}{\mu_t^{-1}+ \nu_t^{-1}}. 
\end{equation}
We can estimate each of these three terms separately as follows:
\begin{itemize}
\item \emph{Estimation of $I_1$.}  Since the operator $ \widetilde{\mathcal{P}}_{\mathbb{T}_t} - \alpha_t \widetilde{\mathcal{P}}_{\mathbb{T}_t} \mathcal{W}_t^{-1} \mathcal{A}^* \mathcal{A} \widetilde{\mathcal{P}}_{\mathbb{T}_t}$ is self-adjoint under $\langle \cdot, \cdot \rangle_{\mathcal{W}_t}$, we have
\begin{equation}\label{eq7}
\begin{aligned}
\|  \widetilde{\mathcal{P}}_{\mathbb{T}_t} -  \alpha_t \widetilde{\mathcal{P}}_{\mathbb{T}_t} \mathcal{W}_t^{-1} \mathcal{A}^* \mathcal{A} \widetilde{\mathcal{P}}_{\mathbb{T}_t}  \|_{\mathcal{W}_t} & = \sup_{\| \bm{Z} \|_{\mathcal{W}_t} = 1} | \langle ( \widetilde{\mathcal{P}}_{\mathbb{T}_t} -  \alpha_t \widetilde{\mathcal{P}}_{\mathbb{T}_t} \mathcal{W}_t^{-1} \mathcal{A}^* \mathcal{A} \widetilde{\mathcal{P}}_{\mathbb{T}_t} ) \bm{Z}, \bm{Z}  \rangle_{\mathcal{W}_t} | \\
&= \sup_{\| \bm{Z} \|_{\mathcal{W}_t} = 1} \big|  \| \widetilde{\mathcal{P}}_{\mathbb{T}_t} \bm{Z}\|_{\mathcal{W}_t}^2 - \alpha_t \langle  \mathcal{A}^* \mathcal{A} \widetilde{\mathcal{P}}_{\mathbb{T}_t} \bm{Z}, \mathcal{W}_t^{-1} \widetilde{\mathcal{P}}_{\mathbb{T}_t}  \bm{Z}  \rangle_{\mathcal{W}_t} \big| \\
&= \sup_{\| \bm{Z} \|_{\mathcal{W}_t} = 1} \big|  \| \widetilde{\mathcal{P}}_{\mathbb{T}_t} \bm{Z}\|_{\mathcal{W}_t}^2 -  \alpha_t \| \mathcal{A} \widetilde{\mathcal{P}}_{\mathbb{T}_t} \bm{Z}\|_{2}^2\big|.
\end{aligned}
\end{equation}
Since $\mathrm{rank}( \widetilde{\mathcal{P}}_{\mathbb{T}_t} \bm{Z})\leq 2r$, RIP \eqref{RIP} and Lemma \ref{lem3.1} imply
\begin{equation}\label{eq8}
\begin{split}
\mu_t^{-1} (1 - \delta_{2r}) \| \widetilde{\mathcal{P}}_{\mathbb{T}_t} \bm{Z} \|_{\mathcal{W}_t}^2\leq  (1 - \delta_{2r})& \| \widetilde{\mathcal{P}}_{\mathbb{T}_t} \bm{Z} \|_{F}^2 \leq \| \mathcal{A} \widetilde{\mathcal{P}}_{\mathbb{T}_t} \bm{Z} \|_2^2\cr
&\leq (1+ \delta_{2r}) \| \widetilde{\mathcal{P}}_{\mathbb{T}_t} \bm{Z} \|_{F}^2 \leq \nu_t^{-1} (1+ \delta_{2r}) \| \widetilde{\mathcal{P}}_{\mathbb{T}_t} \bm{Z} \|_{\mathcal{W}_t}^2.
\end{split}
\end{equation}
Combining \eqref{eq7} with \eqref{eq8}, we get
\begin{equation}\label{eq9}
\begin{aligned}
\|  \widetilde{\mathcal{P}}_{\mathbb{T}_t} -  \alpha_t \widetilde{\mathcal{P}}_{\mathbb{T}_t} \mathcal{W}_t^{-1} \mathcal{A}^* \mathcal{A} \widetilde{\mathcal{P}}_{\mathbb{T}_t} \|_{\mathcal{W}_t} & 
\leq \max \left\{ \left| 1- \alpha_t \nu_t^{-1}(1+ \delta_{2r}) \right|, \left| 1-  \alpha_t \mu_t^{-1} (1- \delta_{2r}) \right| \right\}  \sup_{\| \bm{Z} \|_{\mathcal{W}_t} =1}  \|  \widetilde{\mathcal{P}}_{\mathbb{T}_t} \bm{Z} \|_{\mathcal{W}_t}^2\\
&\leq \max \left\{ \left|1- \alpha_t \nu_t^{-1}(1+ \delta_{2r}) \right|, \left|1-  \alpha_t \mu_t^{-1} (1- \delta_{2r})\right| \right\}\\
&= 1-  \frac{2}{1 +  \rho_t} (1- \delta_{2r}),
\end{aligned}
\end{equation}
where in the last equality we have used \eqref{alpha-rho}.
\item \emph{Estimation of $I_2$.}
Since $( \mathcal{I} - \widetilde{\mathcal{P}}_{\mathbb{T}_t})\bm{X}_t = \bm{0}$, Lemma \ref{lem3.6} implies
\begin{equation}\label{eq10}
\begin{aligned}
\|\big( \mathcal{I} - \widetilde{\mathcal{P}}_{\mathbb{T}_t} \big) (\bm{X}_t - \bm{X}) \|_{\mathcal{W}_t} \leq \frac{\rho_t^{\frac{5}{4}}}{ \nu_t^{\frac12}\sigma_{\min}(\bm{X})} \| \bm{X}_t - \bm{X} \|_{\mathcal{W}_t}^2.
\end{aligned}
\end{equation}

\item \emph{Estimation of $I_3$.}
Finally, we derive a bound for $\alpha_t \| \widetilde{\mathcal{P}}_{\mathbb{T}_t} \mathcal{W}_t^{-1} \mathcal{A}^* \mathcal{A} \big( \mathcal{I} - \widetilde{\mathcal{P}}_{\mathbb{T}_t} \big) (\bm{X}_t - \bm{X})\|_{\mathcal{W}_t}$. Using Lemma \ref{lem3.8} and equation \eqref{alpha-rho}, we obtain
\begin{equation}\label{eq11}
\begin{aligned}
\alpha_t \| \widetilde{\mathcal{P}}_{\mathbb{T}_t} \mathcal{W}_t^{-1} \mathcal{A}^* \mathcal{A} \big( \mathcal{I} - \widetilde{\mathcal{P}}_{\mathbb{T}_t} \big) \bm{X} \|_{\mathcal{W}_t} &\leq \left( \frac{\rho_t - 1}{\rho_t +1} + \delta_{3r} \right) \| \bm{X}_t - \bm{X} \|_{\mathcal{W}_t}.
\end{aligned}
\end{equation}
\end{itemize}

By combining \eqref{eq9}, \eqref{eq10}, and \eqref{eq11}, we obtain 
\begin{equation}\nonumber
\begin{aligned}
&\| \bm{X}_{t+1} - \bm{X}\|_{\mathcal{W}_t} \leq ( 1+ \sqrt{\rho_t}) \|  \bm{W}_t - \bm{X} \|_{\mathcal{W}_t}\\
&\leq ( 1+ \sqrt{\rho_t} ) \left( 1-  \frac{2}{1 +  \rho_t} (1- \delta_{2r}) +  \frac{\rho_t - 1}{\rho_t +1} + \delta_{3r}  + \frac{\rho_t^{\frac{5}{4}}}{ \nu_t^{\frac12}\sigma_{\min}(\bm{X})}  \| \bm{X}_t - \bm{X}\|_{\mathcal{W}_t} \right) \| \bm{X}_t - \bm{X} \|_{\mathcal{W}_t}\\
&\leq ( 1+ \sqrt{\rho_t} ) \left( 1-  \frac{2}{1 +  \rho_t} (1- \delta_{2r}) +  \frac{\rho_t - 1}{\rho_t +1} + \delta_{3r}  + \frac{\rho_t^{\frac{7}{4}}}{ \sigma_{\min}(\bm{X})}  \| \bm{X}_t - \bm{X}\|_{F} \right) \| \bm{X}_t - \bm{X} \|_{\mathcal{W}_t}.
\end{aligned}
\end{equation}
Using Lemma \ref{lem3.1}, we further have
\begin{equation}\label{eq100}
\begin{aligned}
&\| \bm{X}_{t+1} - \bm{X}\|_{F}\\
&\leq  ( \rho_t + \sqrt{\rho_t} ) \left( 1-  \frac{2}{1 +  \rho_t} (1- \delta_{2r}) +  \frac{\rho_t - 1}{\rho_t +1} + \delta_{3r}  + \frac{\rho_t^{\frac{7}{4}}}{ \sigma_{\min}(\bm{X})}  \| \bm{X}_t - \bm{X}\|_{F} \right) \| \bm{X}_t - \bm{X} \|_{F}.
\end{aligned}
\end{equation}
Now we set $\epsilon_t  = \| \bm{G}_t \|_{\vee}^2$ and use Lemma \ref{lem3.1} to obtain $\rho_t = \sqrt{2}$. Substituting this into \eqref{eq100} and using the fact $\delta_{2r} \leq \delta_{3r}$, we can simplify \eqref{eq100} as
\begin{equation}\label{eq101}
\begin{aligned}
&\| \bm{X}_{t+1} - \bm{X}\|_{F}\leq \left( 0.895  + 5\delta_{3r} + \frac{2^{\frac{7}{8}}}{ \sigma_{\min}(\bm{X})}  \| \bm{X}_t - \bm{X}\|_{F} \right) \| \bm{X}_t - \bm{X} \|_{F}.
\end{aligned}
\end{equation}
In Algorithm \ref{PRGD}, we set the initial guess as $\bm{X}_0 = \mathcal{H}_{r}(\mathcal{A}^*(\bm{y}))$. Using the analysis in \cite{WCCL}, we have
\begin{equation}\label{FF00}
\| \bm{X}_0 - \bm{X} \|_{F} \leq 2 \delta_{2r} \| \bm{X} \|_{F} \leq 2 \delta_{3r} \sqrt{r} \sigma_{\max}(\bm{X}).
\end{equation}
 Denote
$$
\mu = 0.895 + 5\delta_{3r} + 10 \delta_{3r}\kappa \sqrt{r}.
$$
If $\mu < 1$, then we use induction to show that
\begin{equation}\label{eq200}
\| \bm{X}_{k+1} - \bm{X} \|_{F} \leq \mu \| \bm{X}_k - \bm{X} \|_{F},
\end{equation}
for all $k$.  Combining \eqref{eq101} with \eqref{FF00}, we obtain \eqref{eq200} holds for $k=0$. Suppose \eqref{eq200} holds for $k=0, 1, \dots t-1$. Then we have 
$$
\| \bm{X}_{t} - \bm{X} \|_{F} \leq \mu^t \| \bm{X}_0 - \bm{X} \|_{F} \leq  2 \delta_{3r} \sqrt{r} \sigma_{\max}(\bm{X}).
$$
By substituting this into \eqref{eq101}, we get \eqref{eq200} for $k=t$. Therefore, the induction on $k$ gives that \eqref{eq200} holds for any $k$.
 Furthermore, if
$$
\delta_{3r} \leq \frac{1}{50 + 100 \sqrt{r} \kappa},
$$
then $\mu \leq 0.995$. This completes the proof.
\end{proof}

\section{ Numerical experiments}
\label{sec:numerical} 

In this section, we apply the PRGD algorithm, as described in \cref{PRGD}, to solve low-rank matrix recovery problems such as low-rank matrix completion, low-rank matrix sensing, and phase retrieval. We compare the performance of PRGD with other state-of-the-art algorithms, including normalized iterative hard thresholding (NIHT) \cite{TW}, Riemannian gradient descent (RGD) with the canonical metric \cite{CWW, WCCL,V, CW-phase, LMC, TMC}, and the RGD with Shampoo preconditioner \cite{GKS} defined in \eqref{eq:ShamPrecon}. Our experimental results show that PRGD is highly efficient compared with other methods. We conducted all experiments using MATLAB R2020b on a desktop computer equipped with a 2.4GHz i5-10600T CPU and 16GB memory. The singular value decomposition (SVD) involved in the experiments is conducted using PROPACK in MATLAB R2020b. 

\subsection{Low-Rank Matrix Completion}
We begin our investigation of PRGD's performance by assessing its efficiency in solving the low-rank matrix completion problem. In this problem, the measurement matrices in \eqref{eq:defA} take the form of $\bm{A}_i=\bm{e}_{r_i}\bm{e}_{c_i}^T$ for all $(r_i,c_i)\in\Omega$, where $\Omega$ is a subset of $[n_1]\times [n_2]$ with $|\Omega|=m$. The goal of matrix completion is to recover the low-rank matrix $\bm{X}$ from its partially observed entries on $\Omega$.

\subsubsection{Simulated data}
\label{4.1}
We test the PRGD algorithm on simulated data. To generate the rank-$r$ matrix $\bm{X}$ of size $n_1\times n_2$, we sample the entries of $\bm{X}_L \in \mathbb{R}^{n_1 \times r}$ and $\bm{X}_R \in \mathbb{R}^{n_2 \times r}$  from a uniform distribution in $[0, 1]$. We then set $\bm{X} = \bm{X}_L \bm{X}_R^{T}$. The set of observed entries $\Omega$ is sampled uniformly at random among all subsets of cardinality $m$. We define the sampling ratio $p$ as $p = \frac{m}{n_1n_2}$ and the oversampling ratio as
\begin{equation}\label{sp}
q = \frac{(n_1 + n_2 - r)r}{m}.
\end{equation}
Given fixed parameters $(n_1, n_2, m, r)$, we obtain a fixed $p$ and $q$. For each experiment, we conduct $5$ random tests with the same $(n_1, n_2, m, r)$ and report the average result for comparison. 

\paragraph{Sensitivity to the Size of Matrix.} We first evaluate the sensitivity of PRGD to changes in matrix size. For this purpose, we conduct tests using varying matrix sizes $(n_1,n_2)$ and different combinations of $r$ and $q$.  We compare the CPU time (in seconds) and the number of iterations required by different algorithms to obtain an $\bm{X}_t$ satisfying $\| \bm{X} - \bm{X}_t \|_F \leq 10^{-4} \cdot \| \bm{X} \|_F$.

We present the results in Figure \ref{res1}.  Figure \ref{res1} indicates that PRGD generally requires significantly fewer iterations than all other algorithms. Moreover, as the matrix size increases, the number of iterations required by the other three algorithms increases noticeably, while the iterations of PRGD exhibit only a slight increase. Furthermore, as analyzed in Section \ref{sec:comPRGD}, each iteration of PRGD takes almost the same computation time as RGD and Shampoo, but much less than NIHT. As a result, the total computation time of PRGD is much less than that of the other three algorithms. Indeed, we can also see from Figure \ref{res1} that when the matrix size increases, the computation time required for the other three algorithms increases significantly. However, the computational time required for PRGD exhibits only a slow increase with increasing matrix size. Notably, when the matrix size is $20000$, the computation time of PRGD is significantly lower than the other three algorithms. Specifically, the computational time required for PRGD is only about $1/20$, $1/10$, and $1/5$ times that of NIHT, RGD, and shampoo, respectively. All these results suggest that choosing a suitable metric is an effective technique to accelerate RGD, and our data-driven metric in PRGD is much better than the metric used in Shampoo.

\begin{figure}[htbp]
	\centering
	\subfloat[{\tt CPU-time(s) $r=10$ and $q=5$}]{ \includegraphics[width=0.45\linewidth]{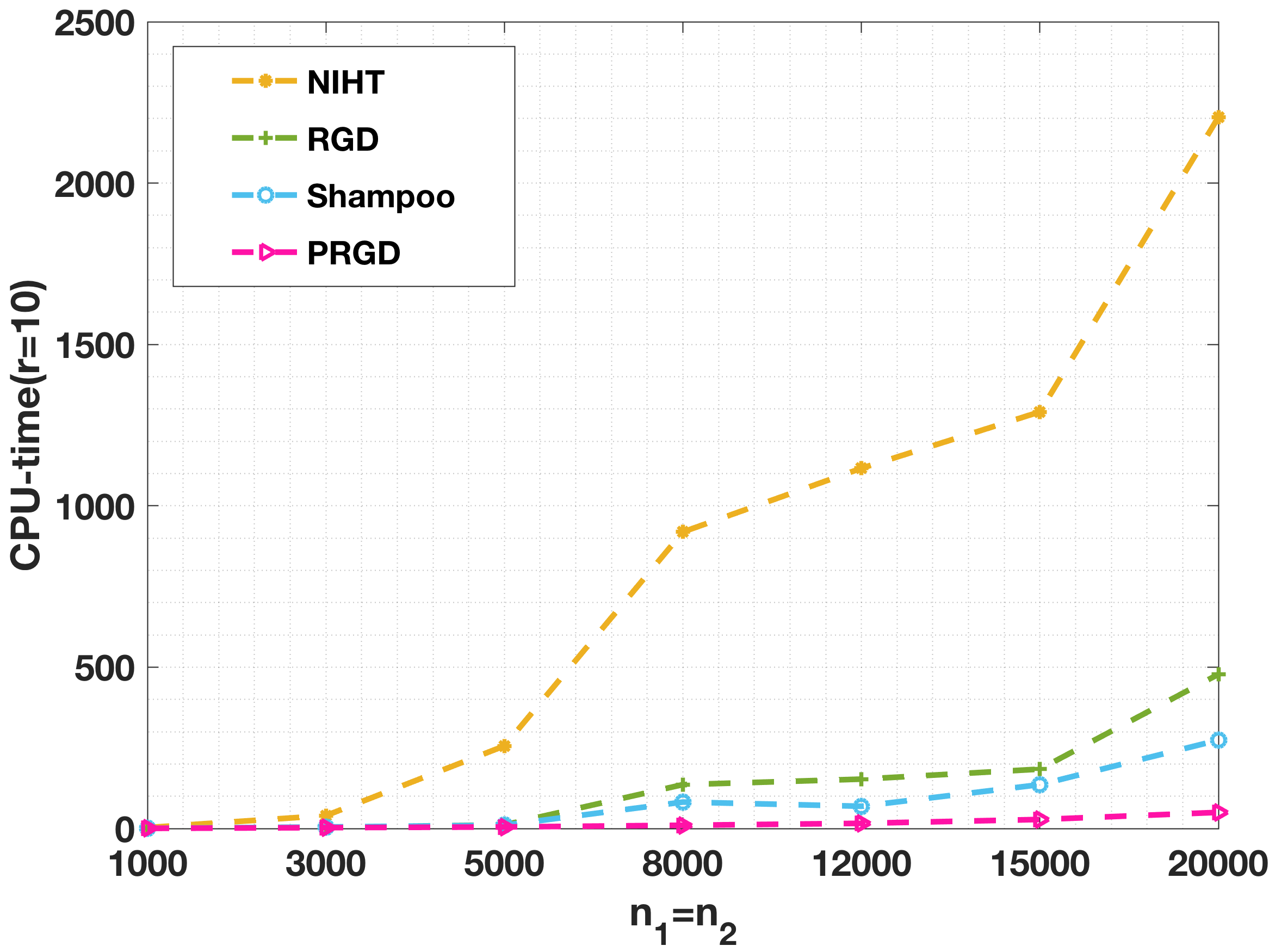} } 		
	\subfloat[{\tt $\sharp$iteration $r=10$ and $q=5$}]{ \includegraphics[width=0.45\linewidth]{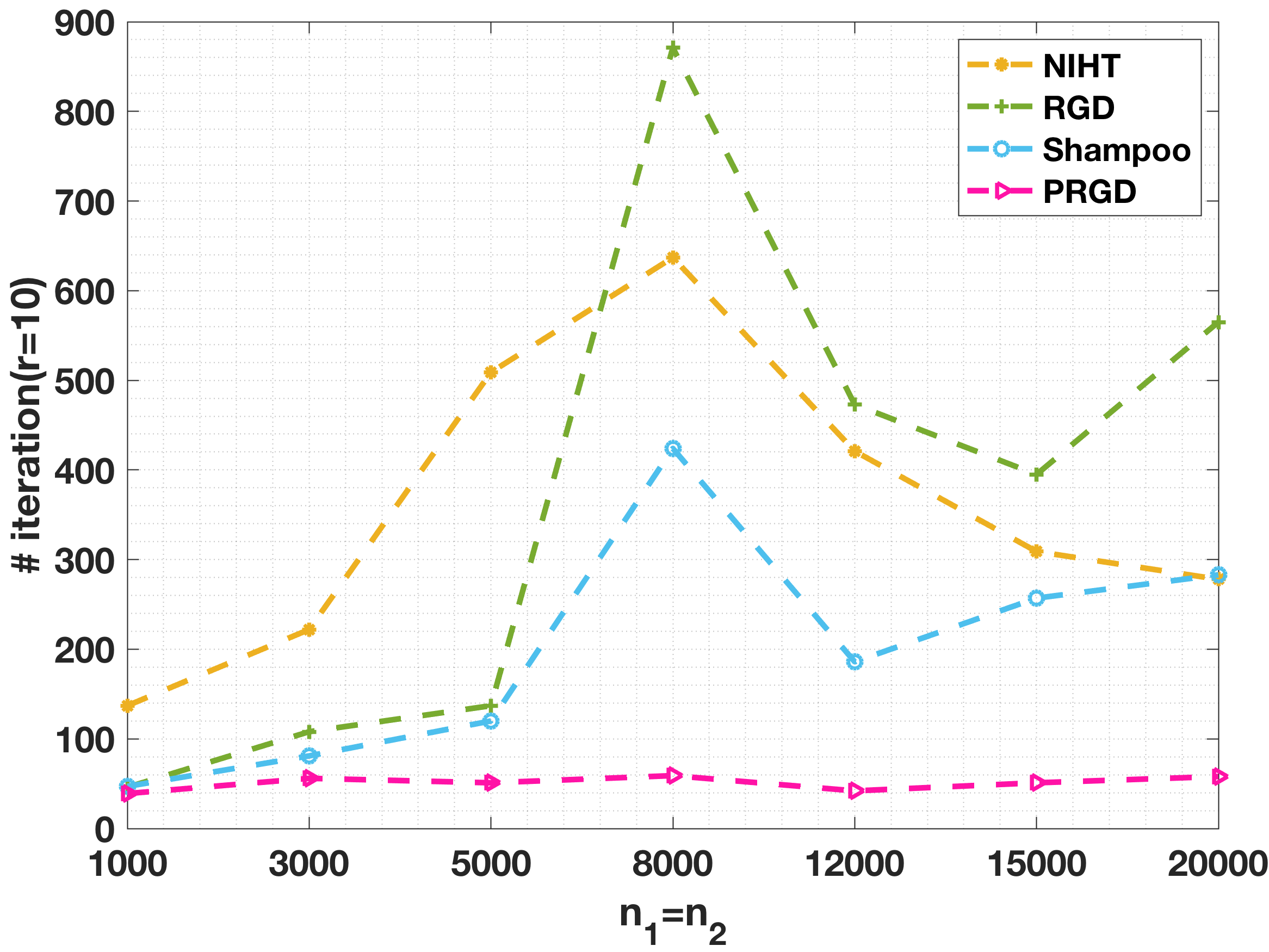} } 		\\[-2mm]	
%%%%%%%
	\subfloat[{\tt CPU-time(s) $r=50$ and $q=6$}]{ \includegraphics[width=0.45\linewidth]{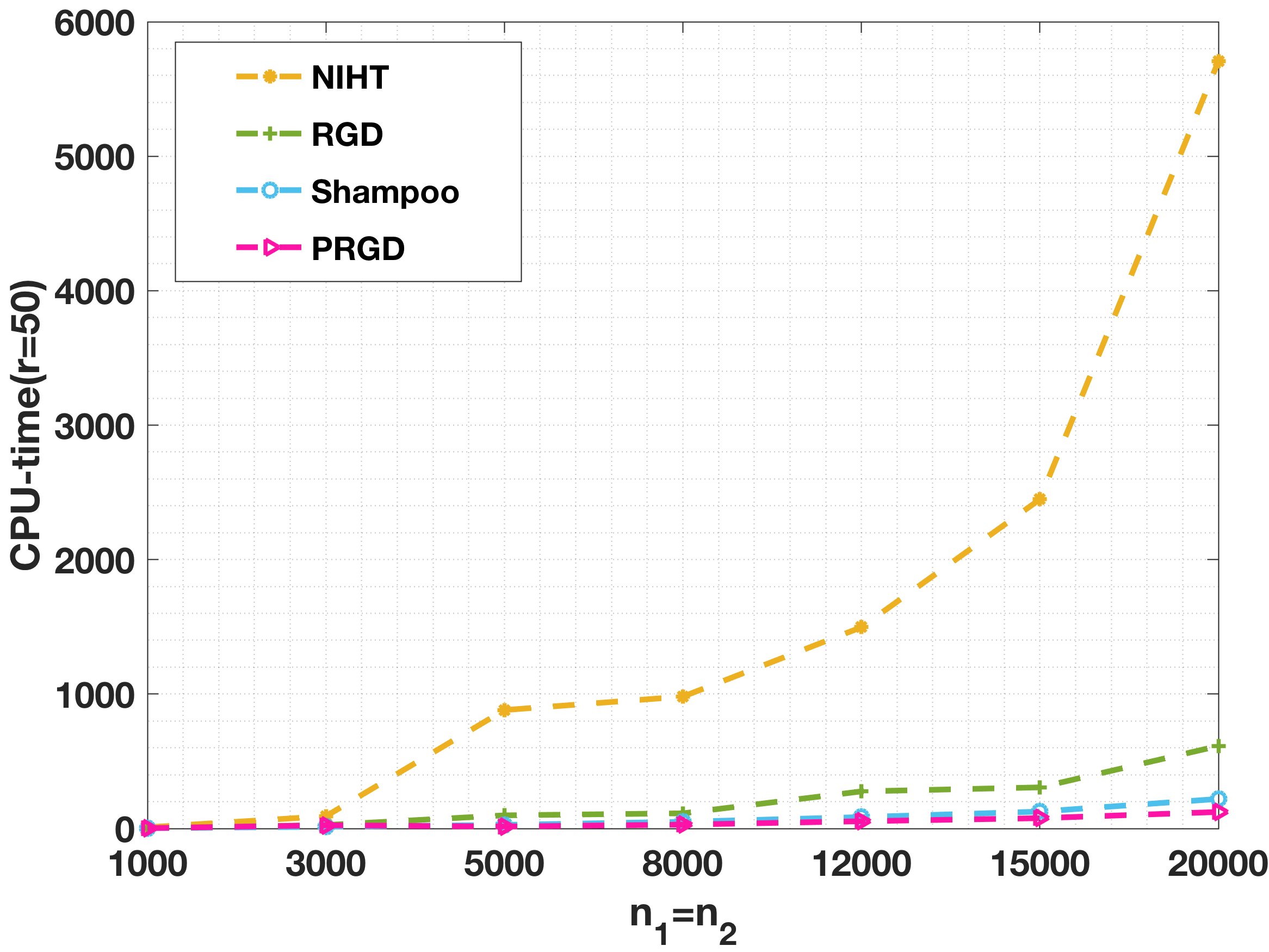} } 
	\subfloat[{\tt $\sharp$iteration $r=50$ and $q=6$}]{ \includegraphics[width=0.45\linewidth]{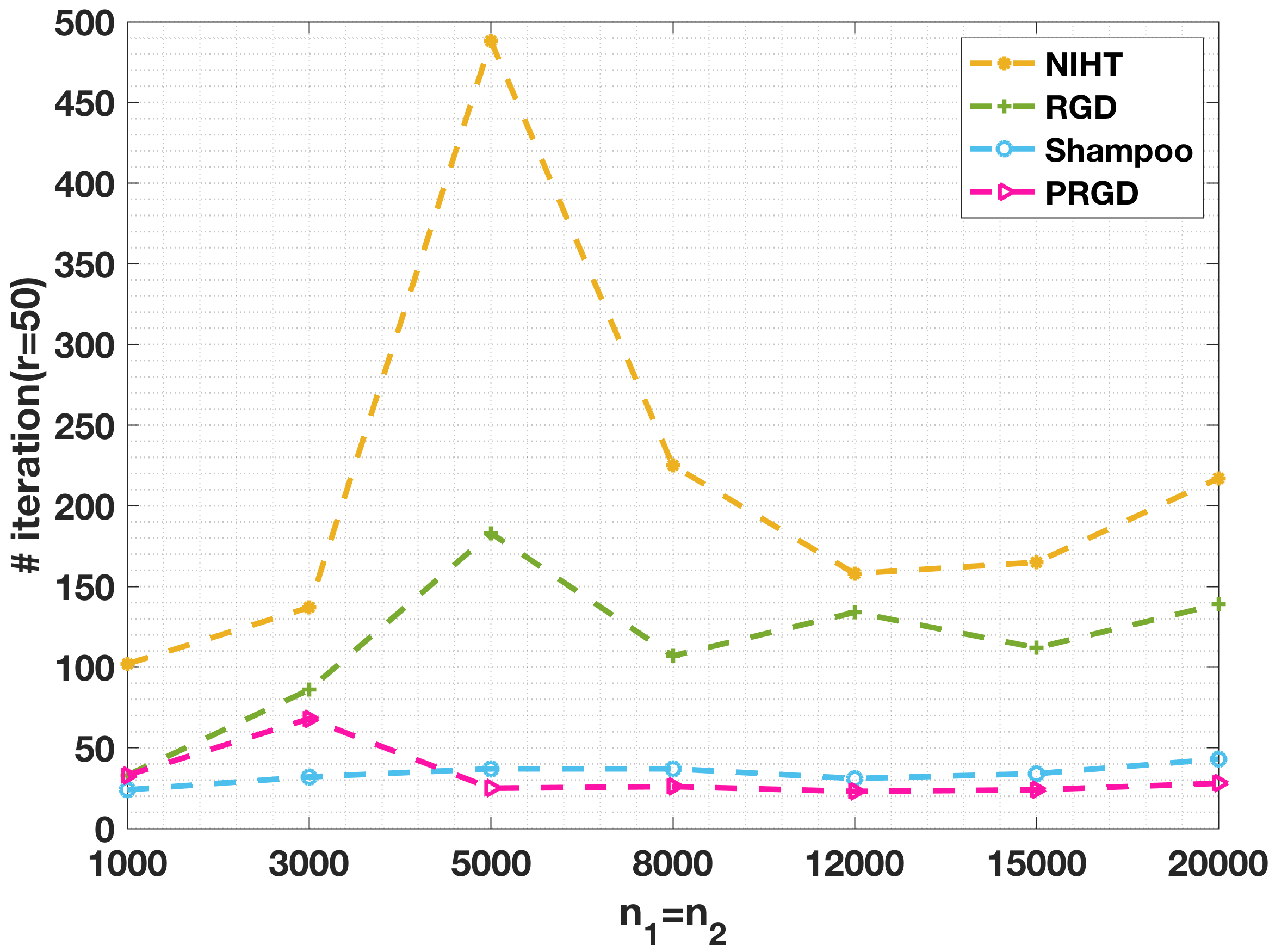} }         \\[-2mm]	
%%%%%%%
	\subfloat[{\tt CPU-time(s) $r=100$ and $q=7$}]{ \includegraphics[width=0.45\linewidth]{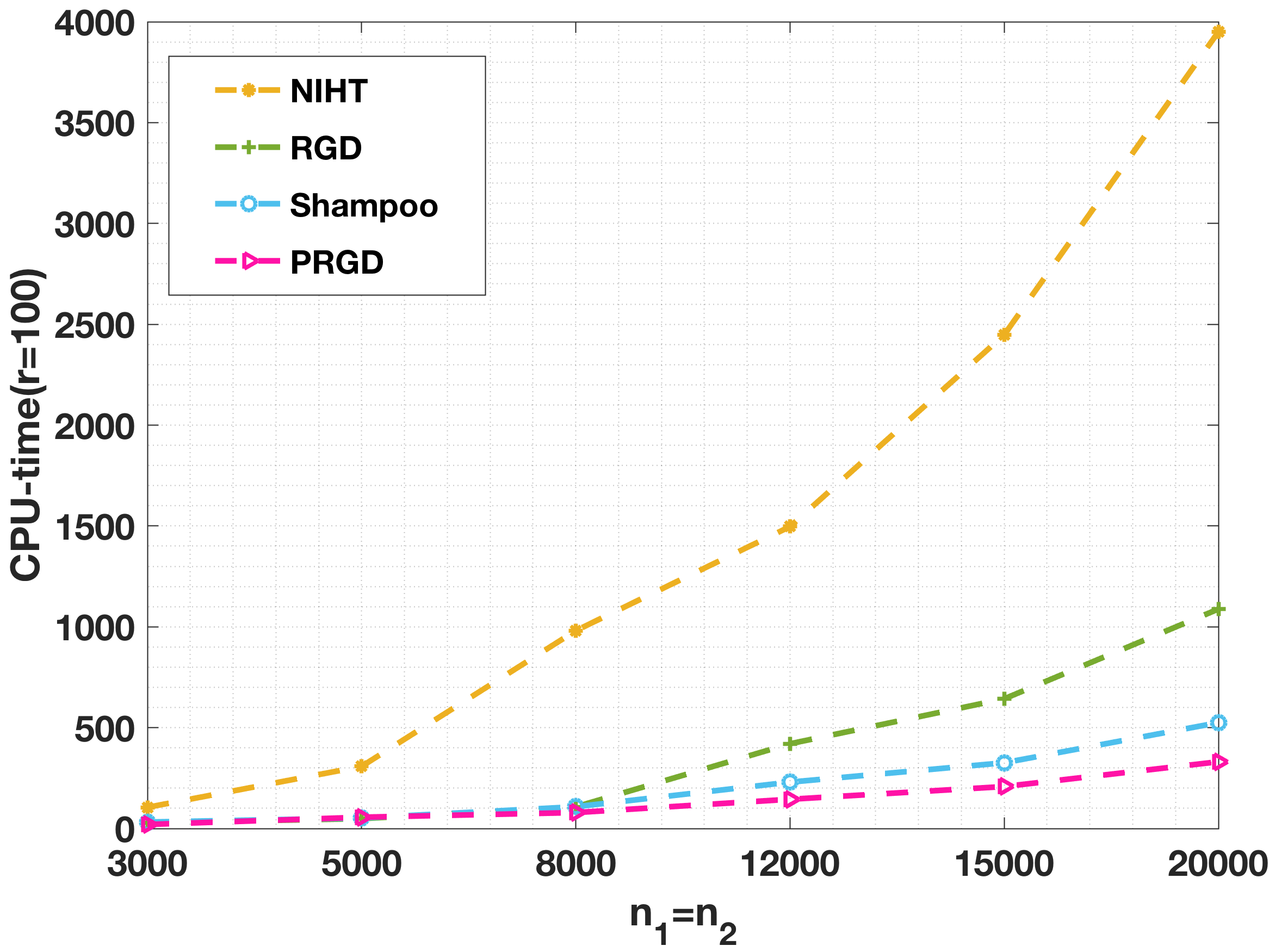} } 
	\subfloat[{\tt $\sharp$iteration $r=100$ and $q=7$}]{ \includegraphics[width=0.45\linewidth]{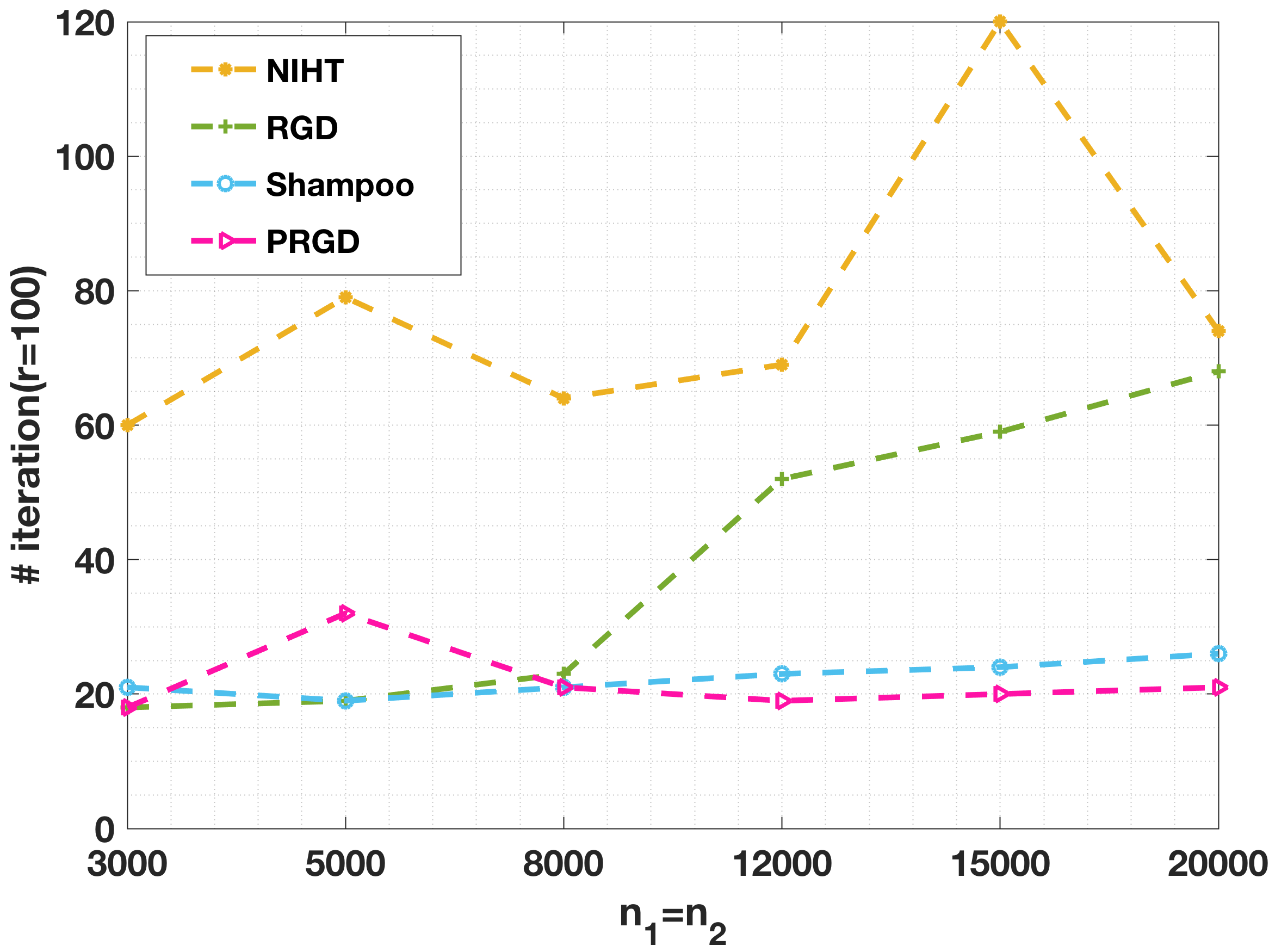} }         \\[-2mm]	
	\caption{ Results of CPU-time (in seconds) and the number of iterations for matrix completion on simulated data.}
    \label{res1}
\end{figure}

\paragraph{Sensitivity to Oversampling Ratio.} We investigate the sensitivity of PRGD to changes in the oversampling ratio $q$. As defined in \eqref{sp}, the oversampling ratio $q$ is the rate between the number of sampled entries and the ``true dimensionality" of an $n_1 \times n_2$ matrix of rank $r$. A smaller oversampling ratio implies a more challenging matrix completion problem since there are fewer observed entries available to estimate the unknown entries. We perform tests on matrices of size $10000 \times 10000$ with rank $10$ and vary the oversampling ratio in the set $\{1/3, 1/4, 1/5, 1/6, 1/7, 1/8, 1/9, 1/10, 1/11, 1/12, 1/13, 1/14, 1/15\}$. The results of the number of iterations and computational time to find an $\bm{X}_t$ satisfying $\| \bm{X} - \bm{X}_t \|_F \leq 10^{-4} \cdot \| \bm{X} \|_F$ are presented in Figure \ref{res2}. 

The results in Figure \ref{res2} demonstrate that for all cases, the PRGD algorithm outperforms all other algorithms in terms of both computational time and the number of iterations required. Moreover, as the oversampling ratio $q$ increases, i.e., the problem becomes more challenging, the advantage of PRGD in terms of the number of iterations and computational time becomes more pronounced. For instance, when the oversampling ratio $q$ is set to $1/3$, indicating a highly challenging matrix completion problem, the PRGD algorithm requires only about $1/10$ of the number of iterations and $1/8$ of the computational time required by RGD. These observations further confirm that the PRGD algorithm we constructed is highly efficient and outperforms RGD, Shampoo, and NIHT.

\begin{figure}[htbp]
	\centering
	\subfloat[{\tt CPU-time(s) }]{ \includegraphics[width=0.45\linewidth]{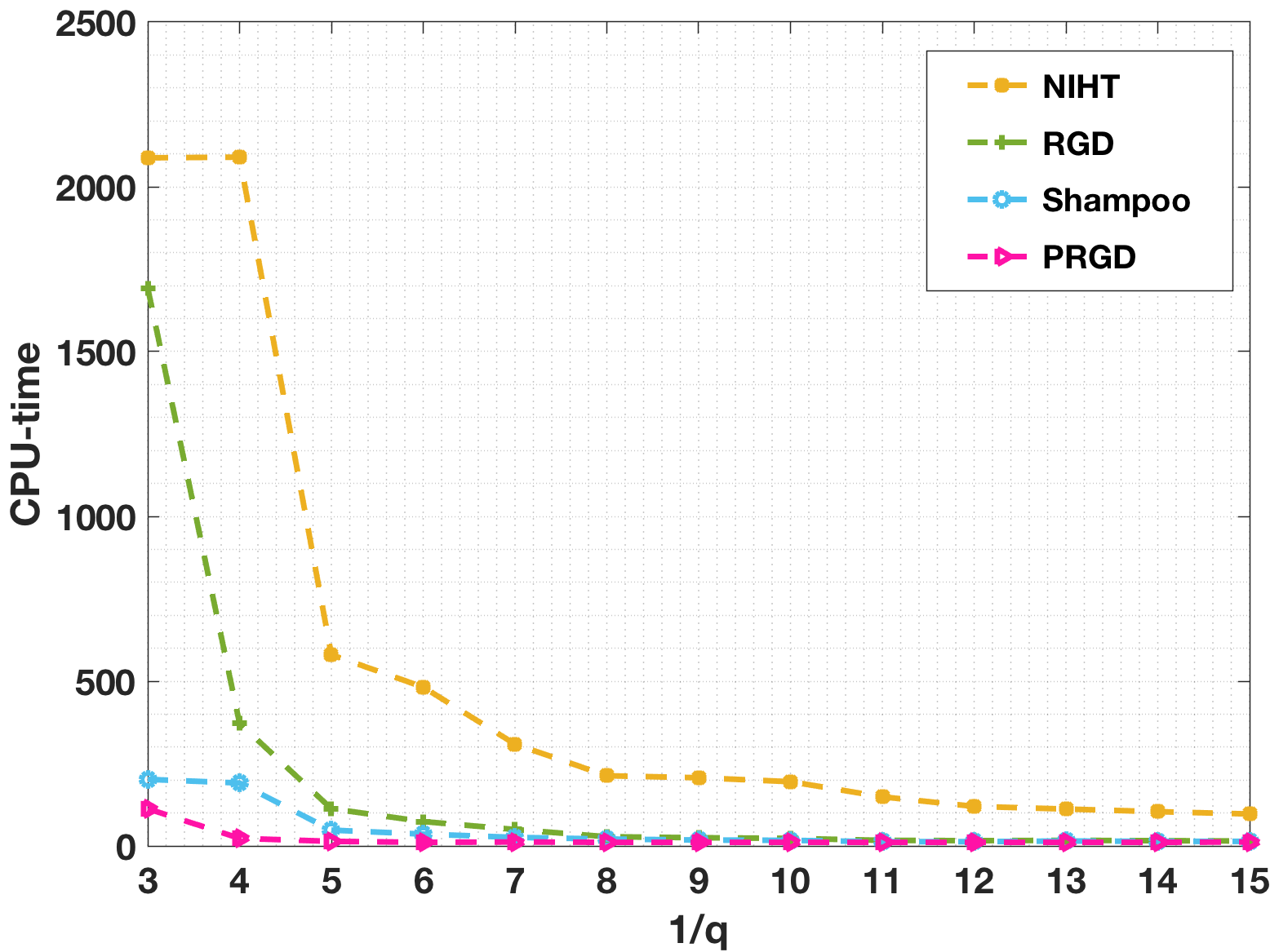} } 		
	\subfloat[{\tt The number of iterations }]{ \includegraphics[width=0.45\linewidth]{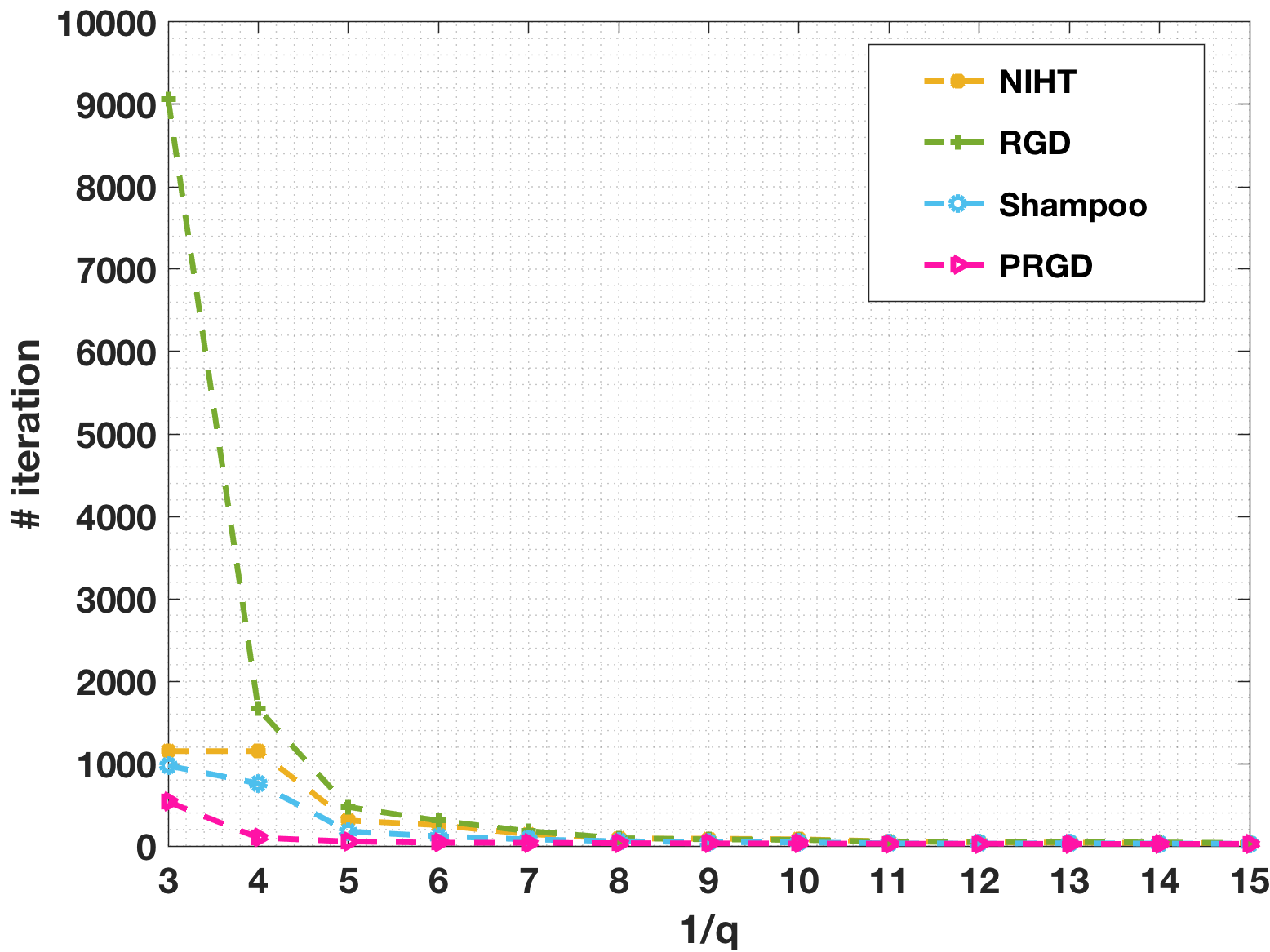} } 		\\[-2mm]	
%%%%%%
	\subfloat[{\tt Zoom in of (a)}]{ \includegraphics[width=0.45\linewidth]{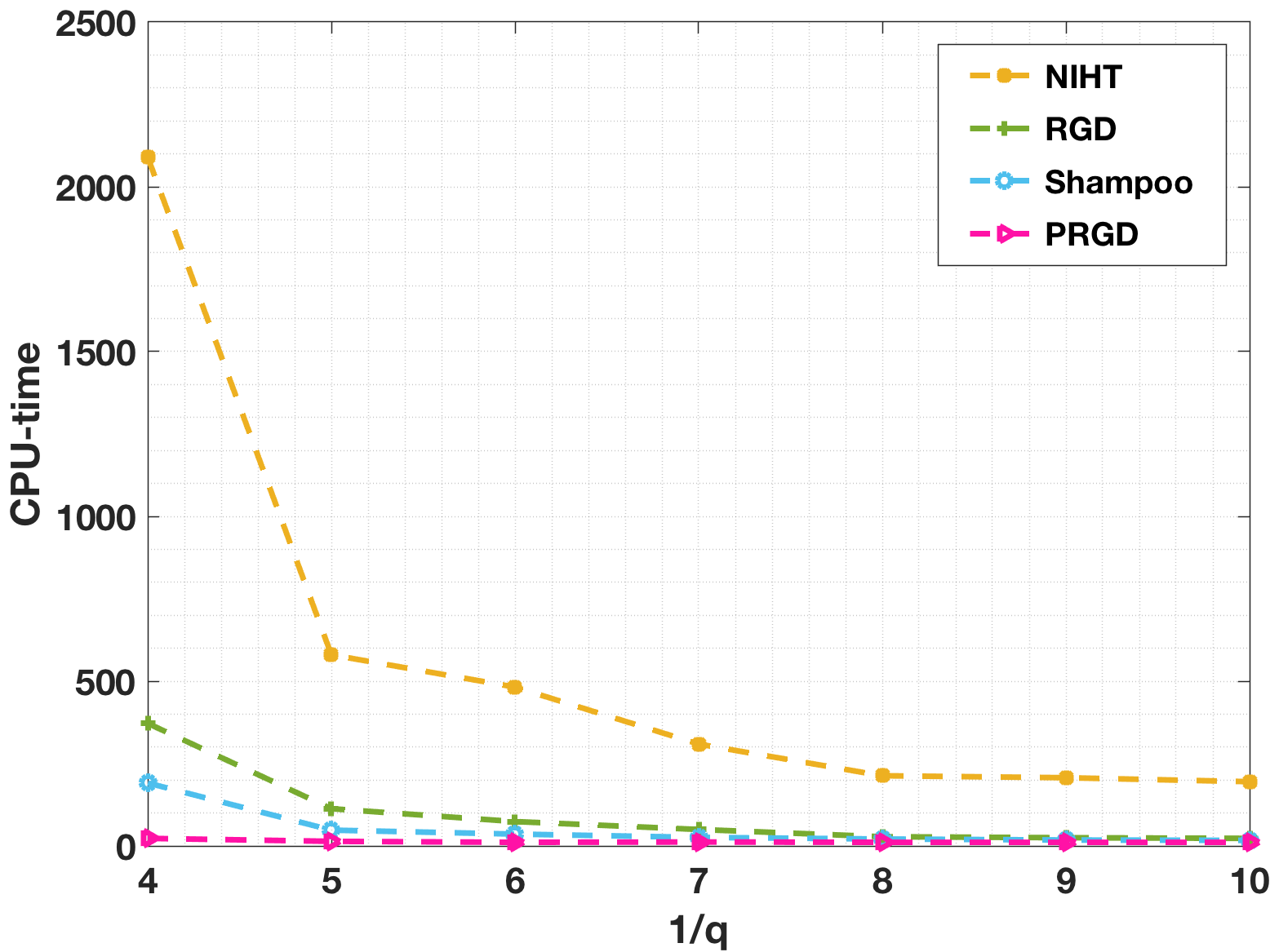} } 
	\subfloat[{\tt Zoom in of (b) }]{ \includegraphics[width=0.45\linewidth]{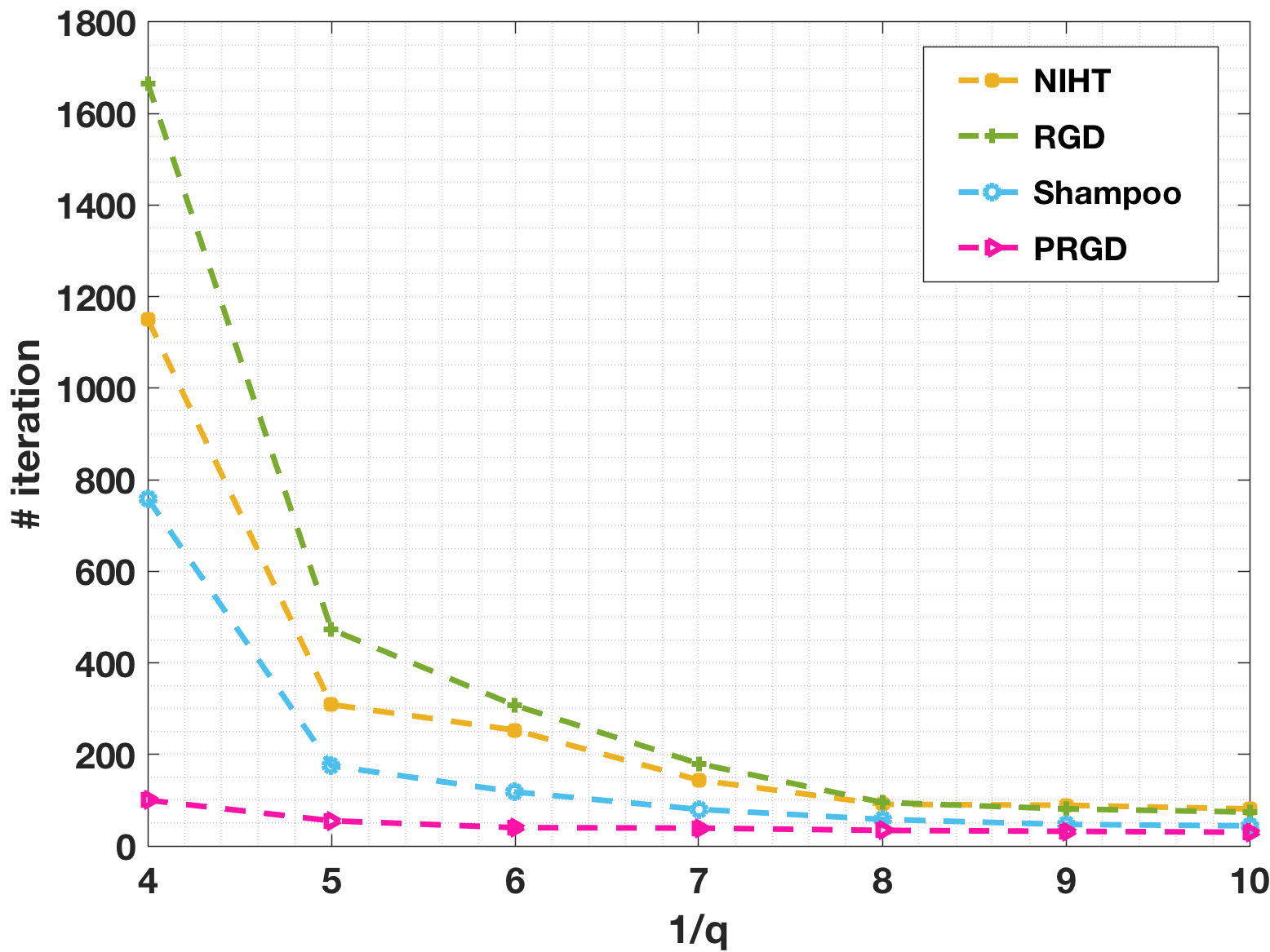} }         \\[-2mm]	
	\caption{ Results of CPU-time (in seconds) and the number of iterations for matrix completion on simulated data. The unknown matrix has size $10000 \times 10000$ and rank $r=10$.}
    \label{res2}
\end{figure}

\begin{figure}[htbp]
	\centering
	\includegraphics[width=0.45\linewidth]{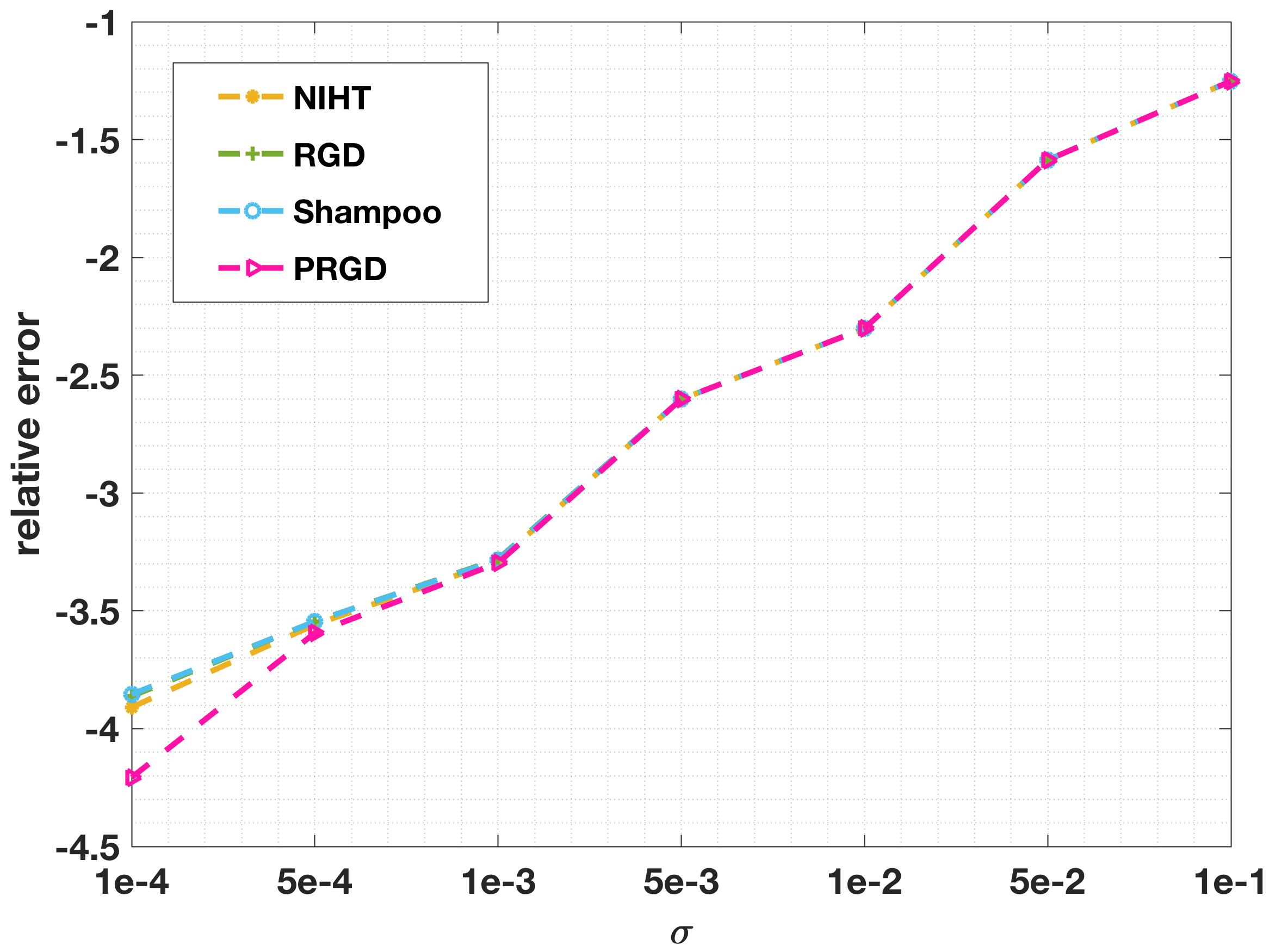} 	
   \caption{ Results of relative error of the recovered matrix for matrix completion on simulated data with noise. The unknown matrix has size $10000 \times 10000$ and rank $r=10$.}
    \label{robust}
\end{figure}

\paragraph{ Robustness to Additive Noise.}
To evaluate the robustness of the algorithms to additive noise,  we add different noise to the sampled entries using vector
\begin{equation}
e = \sigma \cdot \| \mathcal{P}_{\Omega}(\bm{X}) \|_{F} \cdot \frac{\bm{w}}{\| \bm{w} \|_2},
\end{equation}
where the entries of $\bm{w}$ are $i.i.d$ Gaussian random variables and $\sigma$ is referred to as noise level. Actually, $\sigma$ is also the relative error of observed data. We conduct tests with $8$ different values of $\sigma$ from $1\times 10^{-4}$ to $1 \times 10^{-1}$. We test the $10000 \times 10000$ matrix of rank $r = 10$ and fix the oversampling ratio $q=5$. We stop all algorithms when $\| \bm{X}_{t+1} - \bm{X}_{t}\|_{F} \leq 10^{-5}\cdot \max\{1, \| \bm{X}_t\|_{F} \}$. We present the results in Figures \ref{robust} and \ref{res3}. Figure \ref{robust} shows that all algorithms are robust to additive noise in the sense that the error of the recovered matrix is only proportional to the noise level. Moreover, PRGD is faster than the other three algorithms, as evident from Figure \ref{res3}. When the noise level becomes higher, the efficiency advantage is more noticeable. For example, when $\sigma= 0.1$, the PRGD algorithm requires only about $1/10$ of the number of iterations and $1/10$ of the computational time required by RGD.

\begin{figure}[htbp]
	\centering
	\subfloat[{\tt CPU-time(s)}]{ \includegraphics[width=0.43\linewidth]{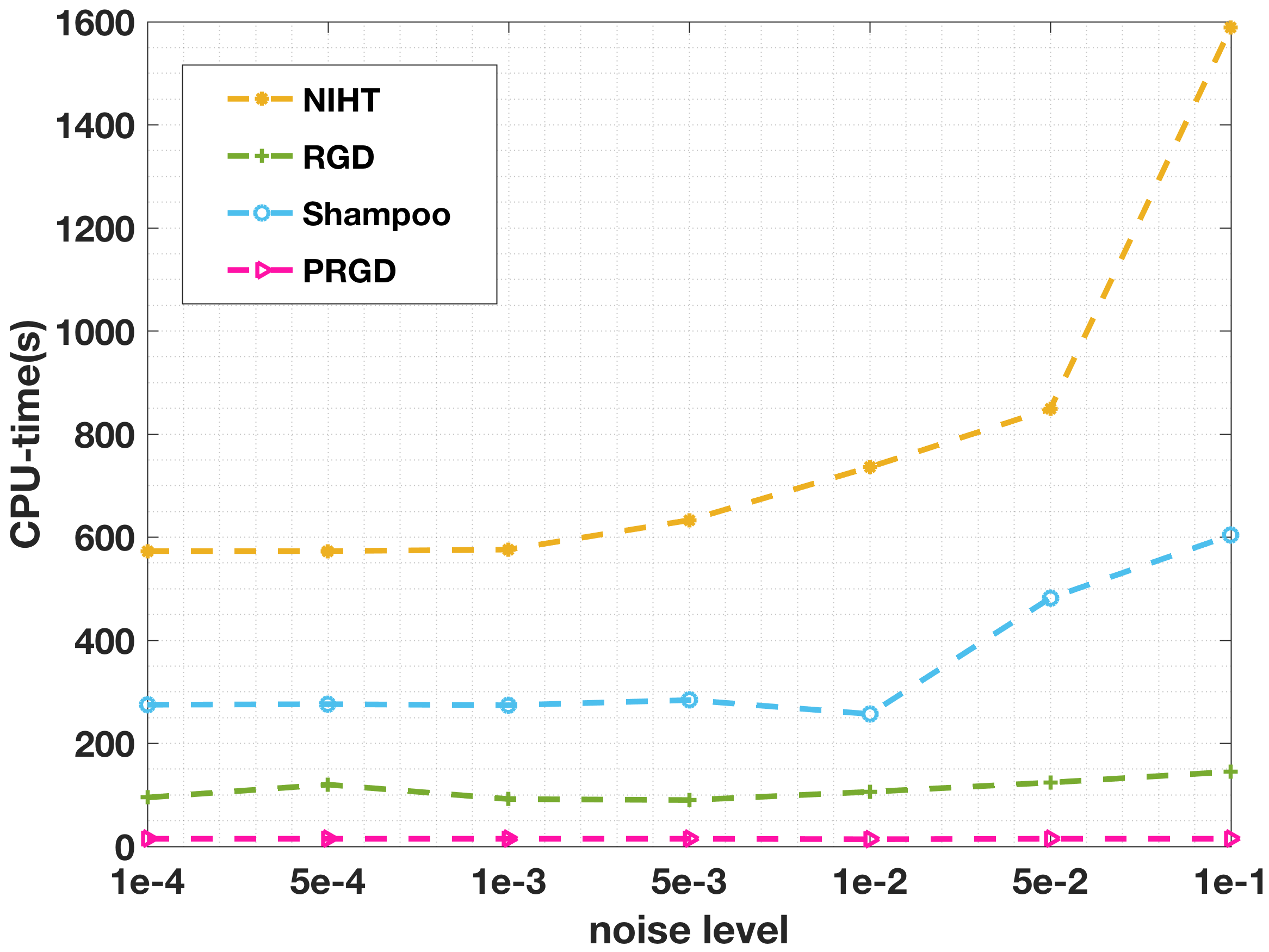} } 		
	\subfloat[{\tt The number of iterations }]{ \includegraphics[width=0.43\linewidth]{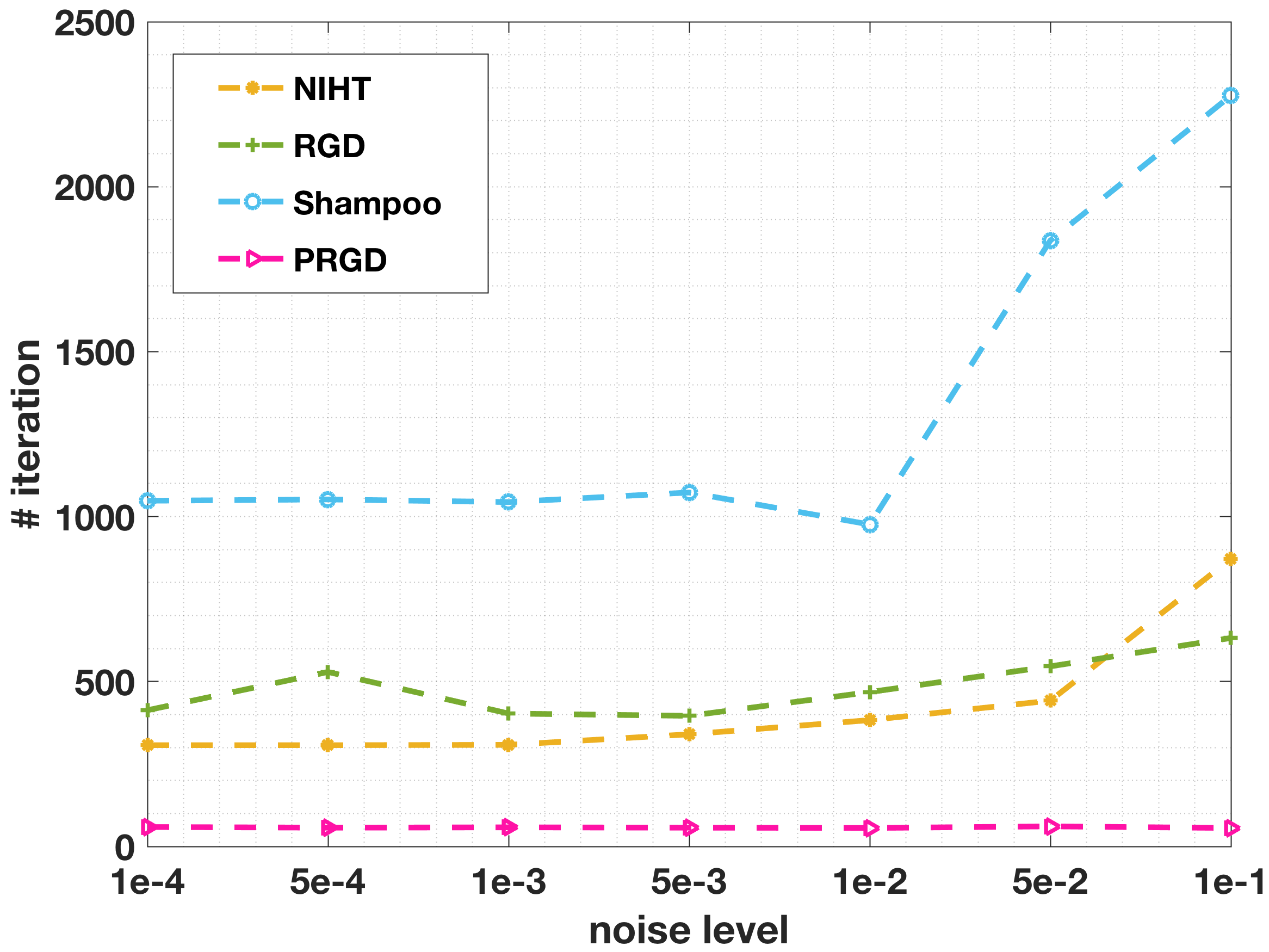} } 		\\[-2mm]	
	\caption{ Results of CPU-time (in seconds) and the number of iterations for matrix completion on simulated data with noise. The unknown matrix has size $10000 \times 10000$ and rank $r=10$.}
    \label{res3}
\end{figure}

\begin{figure}[htbp]
	\centering
	\subfloat[{\tt r=5 }]{ \includegraphics[width=0.43\linewidth]{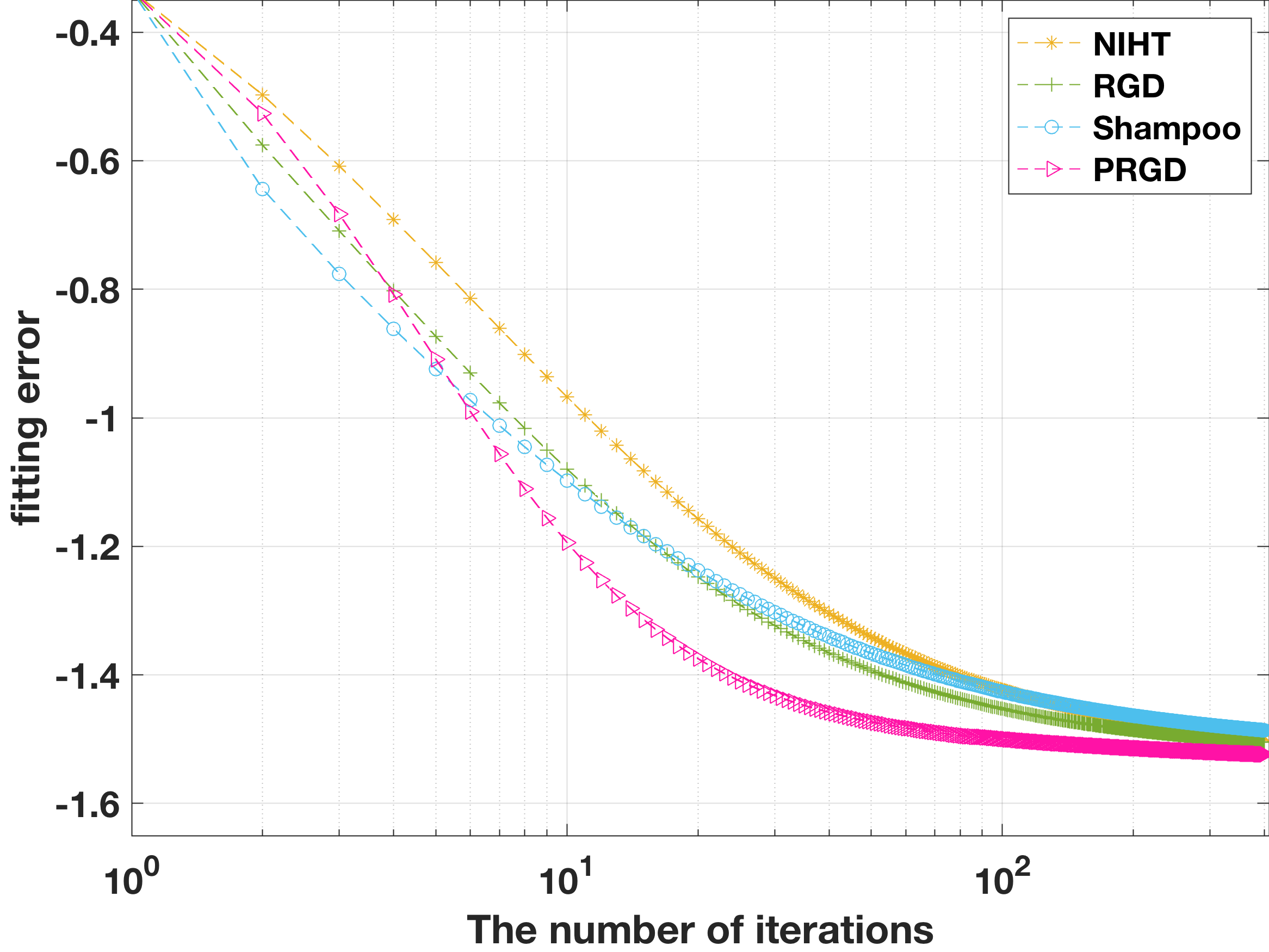} } 		
	\subfloat[{\tt r=10 }]{ \includegraphics[width=0.43\linewidth]{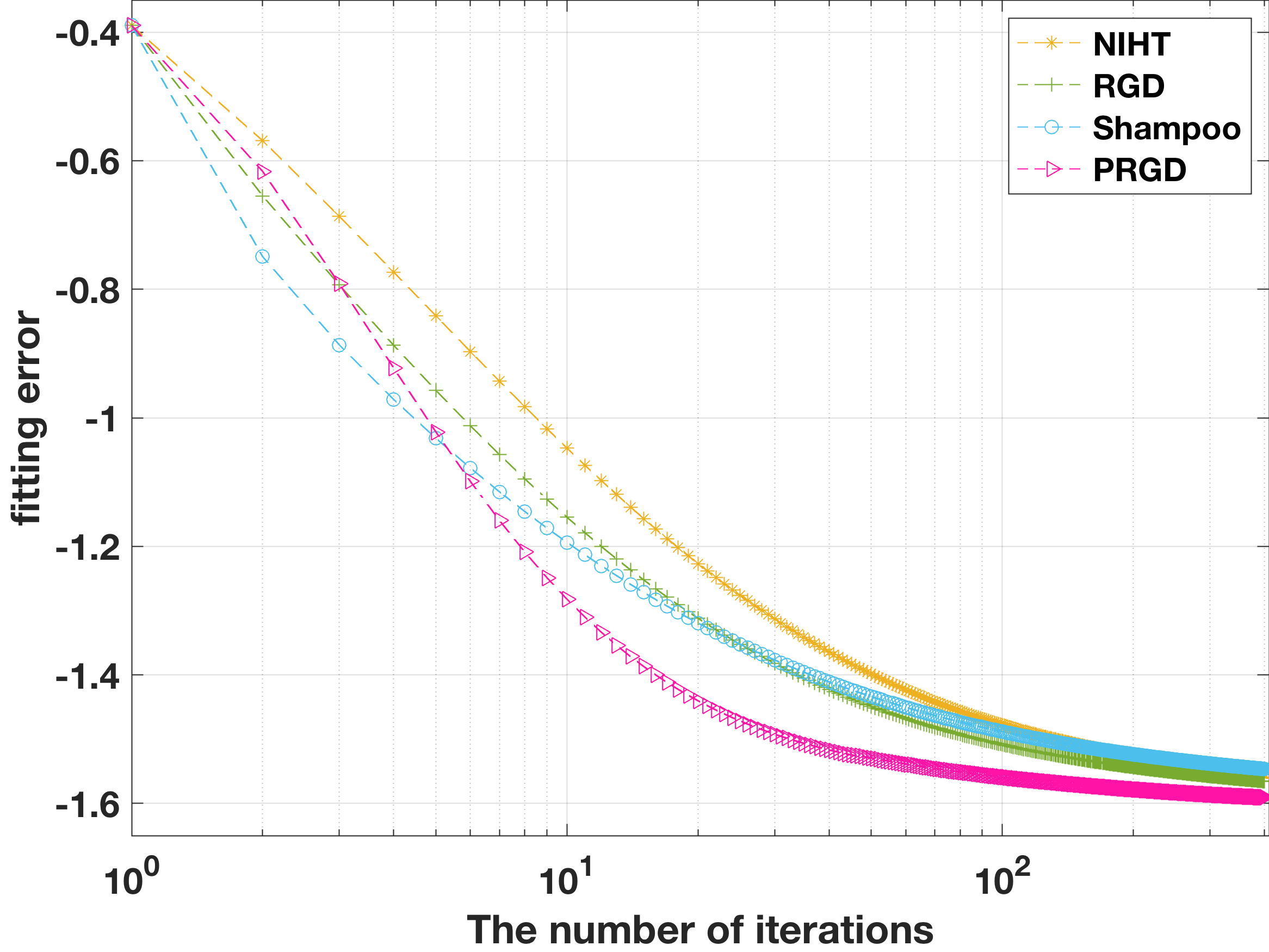} } 		\\[-2mm]	
	\caption{ Results of the fitting error for matrix completion on real data. We set the rank of the unknown matrix to be $r=5$ and $r=10$, respectively.}
    \label{res4}
\end{figure}

\subsubsection{Real Data}
We evaluate the performance of the PRGD algorithm on the Movielens dataset, a real dataset commonly used in matrix completion problems. Specifically, we use the Movielens 1M dataset \cite{HK}, which contains anonymous ratings for 3,952 movies from 6,040 people who joined Movielens in 2000. Since  the ground truth and its rank of this real dataset are not known, we set $r=5$ and $r=10$, respectively, and compare the fitting error
$$
\frac{\| \mathcal{P}_{\Omega}(\bm{X}-\bm{X}_t)\|_{F}}{\| \mathcal{P}_{\Omega}(\bm{X})\|_{F}}
$$
 for all algorithms by running the same number of iterations. We plot the fitting error versus the number of iterations in Figure \ref{res4}. From the figure, we observe that PRGD is the most efficient algorithm for fitting the Movielens dataset with a low-rank matrix. Specifically, PRGD outputs $\bm{X}_t$ with the smallest fitting error among all algorithms for all $t$, and it requires the fewest iterations to achieve the same fitting error. Notably, PRGD requires half the number of iterations as RGD to reduce the fitting error to $10^{-1.4}$.

\subsection{Low-Rank Matrix Sensing}
In this section, we further evaluate the ability of PRGD to recover a low-rank matrix in the matrix sensing problem, where the measurement matrices $\bm{A}_i, i=1,2, \dots, m,$ in the linear operator $\mathcal{A}$ defined in \eqref{eq:defA} are general. In this experiment, we generate the entries of $\bm{A}_i$ using i.i.d. standard Gaussian random variables. The underlying matrix $\bm{X}$ is synthesized as $\bm{X} = \bm{X}_{\bm L} \bm{X}_{\bm R}^{T}$, where  $\bm{X}_{\bm L} \in \mathbb{R}^{n_1 \times r}$ and $\bm{X}_{\bm R} \in \mathbb{R}^{n_2 \times r}$ have i.i.d. standard Gaussian entries.
The measurement vector $\bm{y} \in \mathbb{R}^{m}$ is generated as $\bm{y} = \mathcal{A} \bm{X}$. We define the sampling ratio as $p = \frac{m}{n_1n_2}$ and the oversampling ratio $q$ as defined in \eqref{sp}. 

In this experiment, RGD with a constant step size performs poorly and requires many iterations. Therefore, we do not include a comparison of RGD, but instead compare RGD with an adaptive steepest descent step size (called adaptive-RGD) and PRGD with an adaptive steepest descent step size (called adaptive-PRGD).

\begin{figure}[htbp]
	\centering
	\subfloat[{\tt  The number of iterations}]{ \includegraphics[width=0.45\linewidth]{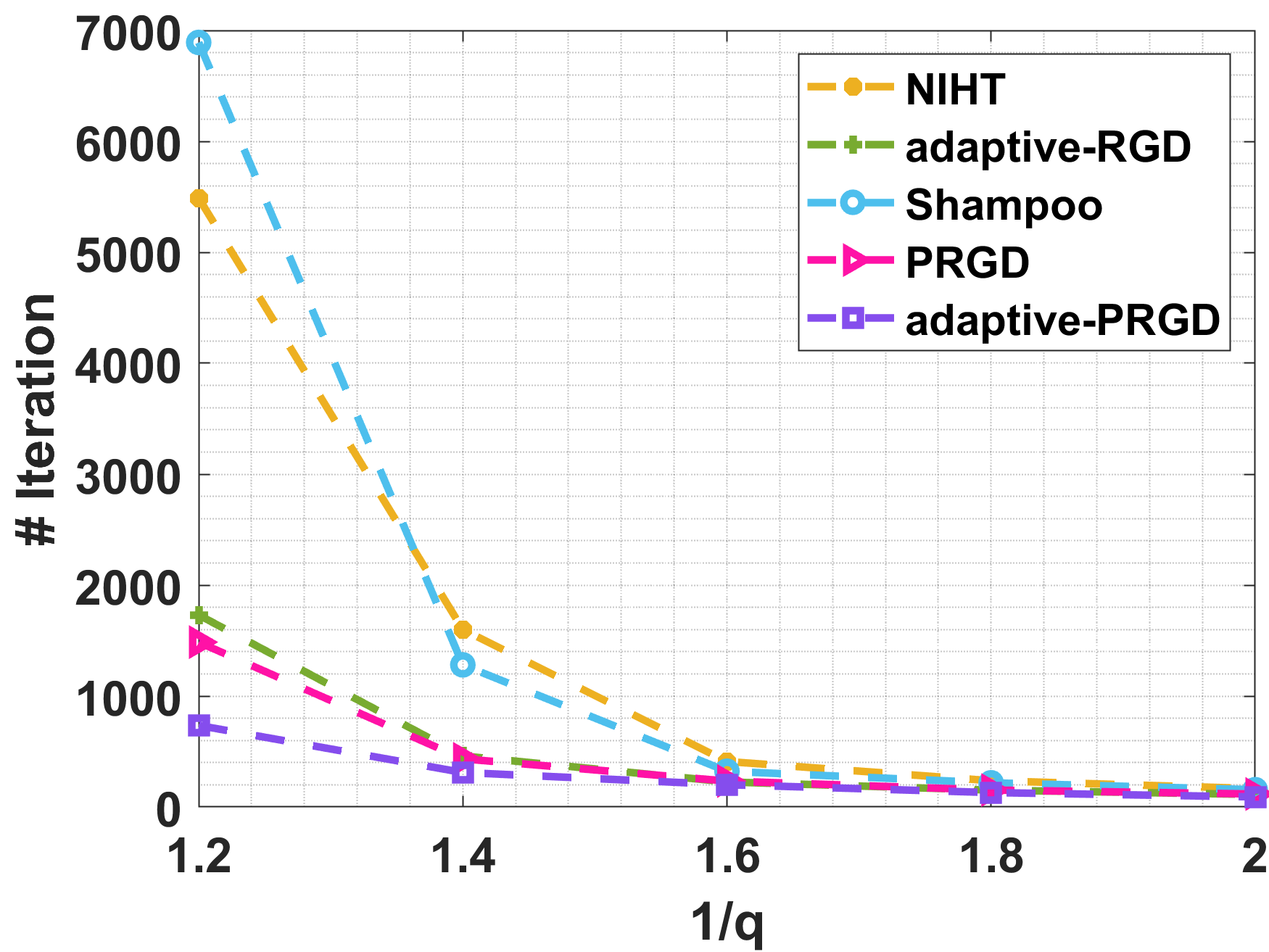} } 	
	\subfloat[{\tt CPU-time(s)}]{ \includegraphics[width=0.45\linewidth]{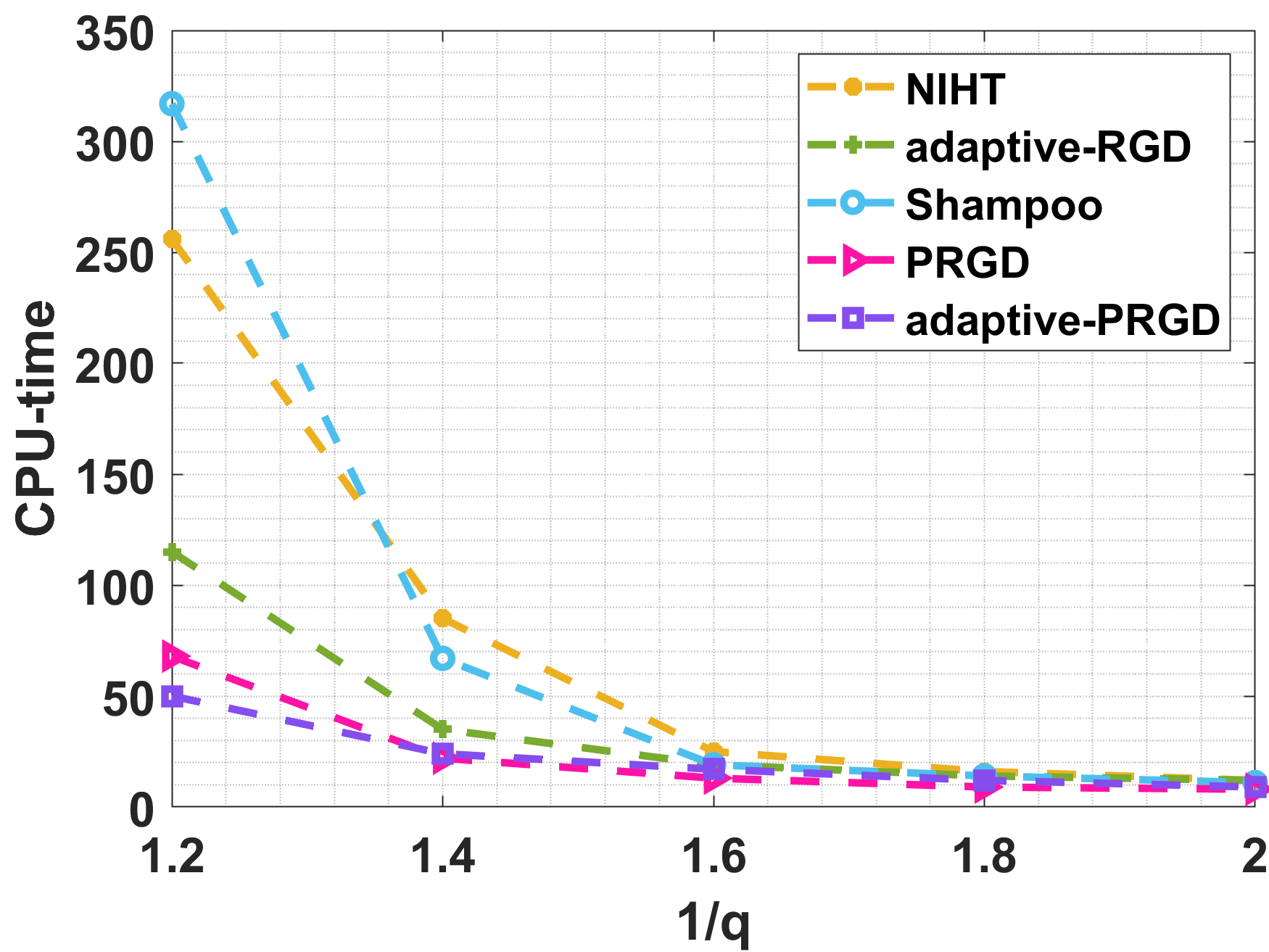} }  	

	\caption{Results of CPU-time (in seconds) and the number of iterations for matrix sensing on simulated data. The unknown matrix has size $150 \times 150$ and rank $r=10$.}
    \label{lmr_1}
\end{figure}

\paragraph{Sensitivity to Oversampling Ratio.} We investigate the sensitivity of PRGD to changes in the over-sampling ratio $q$, where a larger value of $q$ implies a more challenging problem. We fix the size of the measurement matrix at $n_1 =n_2 = 150$ and its rank at $r=10$, while varying the oversampling ratio $1/q \in \{1.2, 1.4, 1.6, 1.8, 2.0 \}$. The results for the number of iterations and computational time required to obtain an $\bm{X}_t$ satisfying $\|\bm{X}-\bm{X}_t\|_F\leq 10^{-4}\cdot \|\bm{X}\|_F$ are shown in Figure \ref{lmr_1}.

From Figure \ref{lmr_1}(a), it can be seen that adaptive-PRGD outperforms the other four algorithms in terms of the number of iterations. PRGD has a comparable number of iterations to adaptive-RGD. Specifically, when the oversampling ratio $1/q=1.2$, the number of iterations of adaptive-PRGD is about $1/2.35$, $1/7.5$, $1/9.4$, and $1/2$ of that of adaptive-RGD, NIHT, Shampoo, and PRGD, respectively. From Figure \ref{lmr_1}(b), it can be observed that the computational time of adaptive-PRGD is always less than that of adaptive-RGD, Shampoo, NIHT, and PRGD. When the oversampling ratio $1/q=1.2$, the computational time of adaptive-PRGD is about $1/2.3$, $1/5.1$, $1/6.3$, and $1/1.3$ of that of adaptive-RGD,  NIHT, Shampoo, and PRGD, respectively. The computational time of PRGD is less than that of adaptive-RGD. This is because although adaptive-RGD and PRGD have a similar number of iterations, adaptive-RGD requires additional time to compute the steepest descent step size.
\begin{figure}[htbp]
	\centering
	\subfloat[{\tt adaptive-RGD}]{ \includegraphics[width=0.3\linewidth]{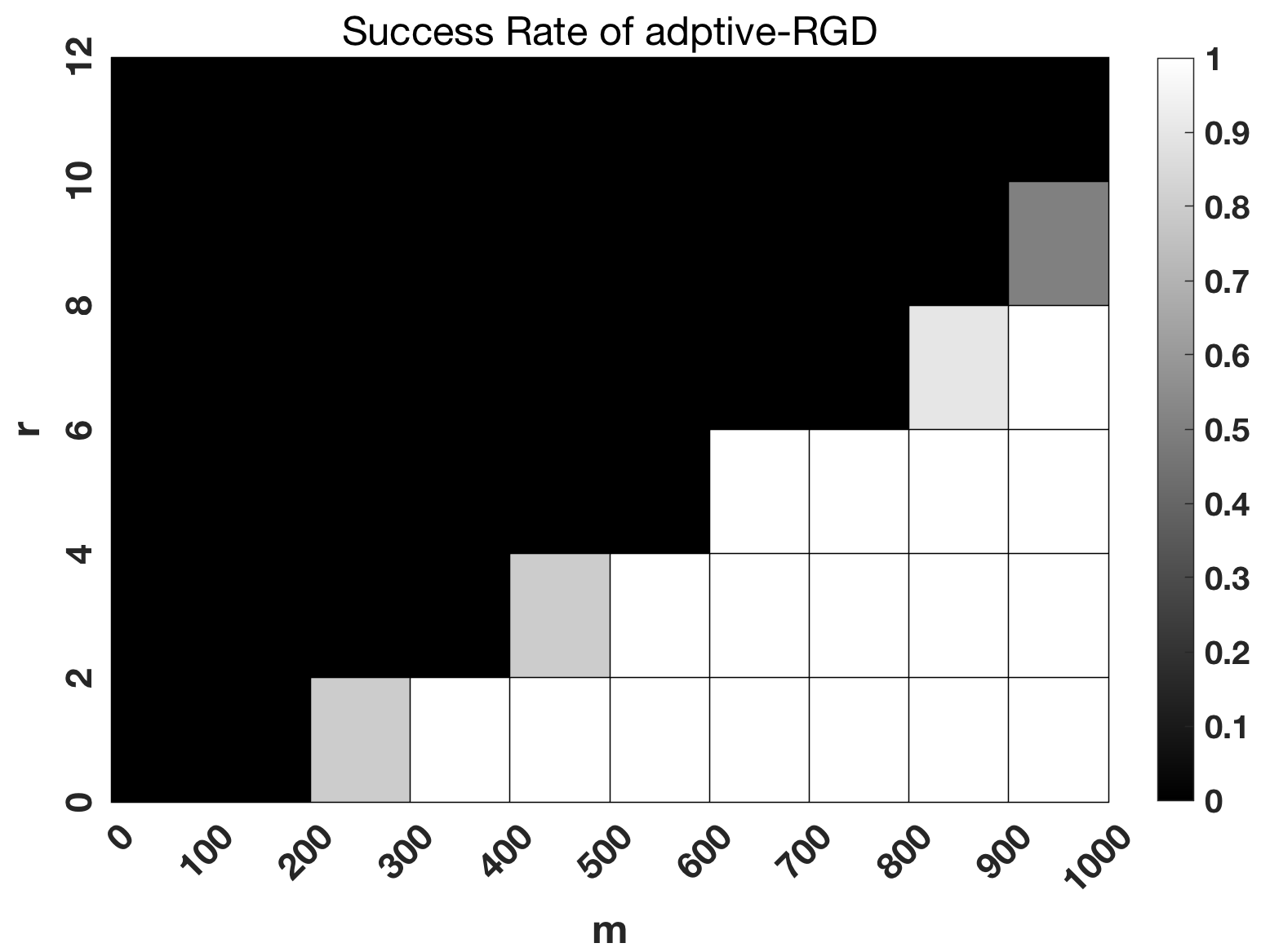} } 	
	\subfloat[{\tt PRGD }]{ \includegraphics[width=0.3\linewidth]{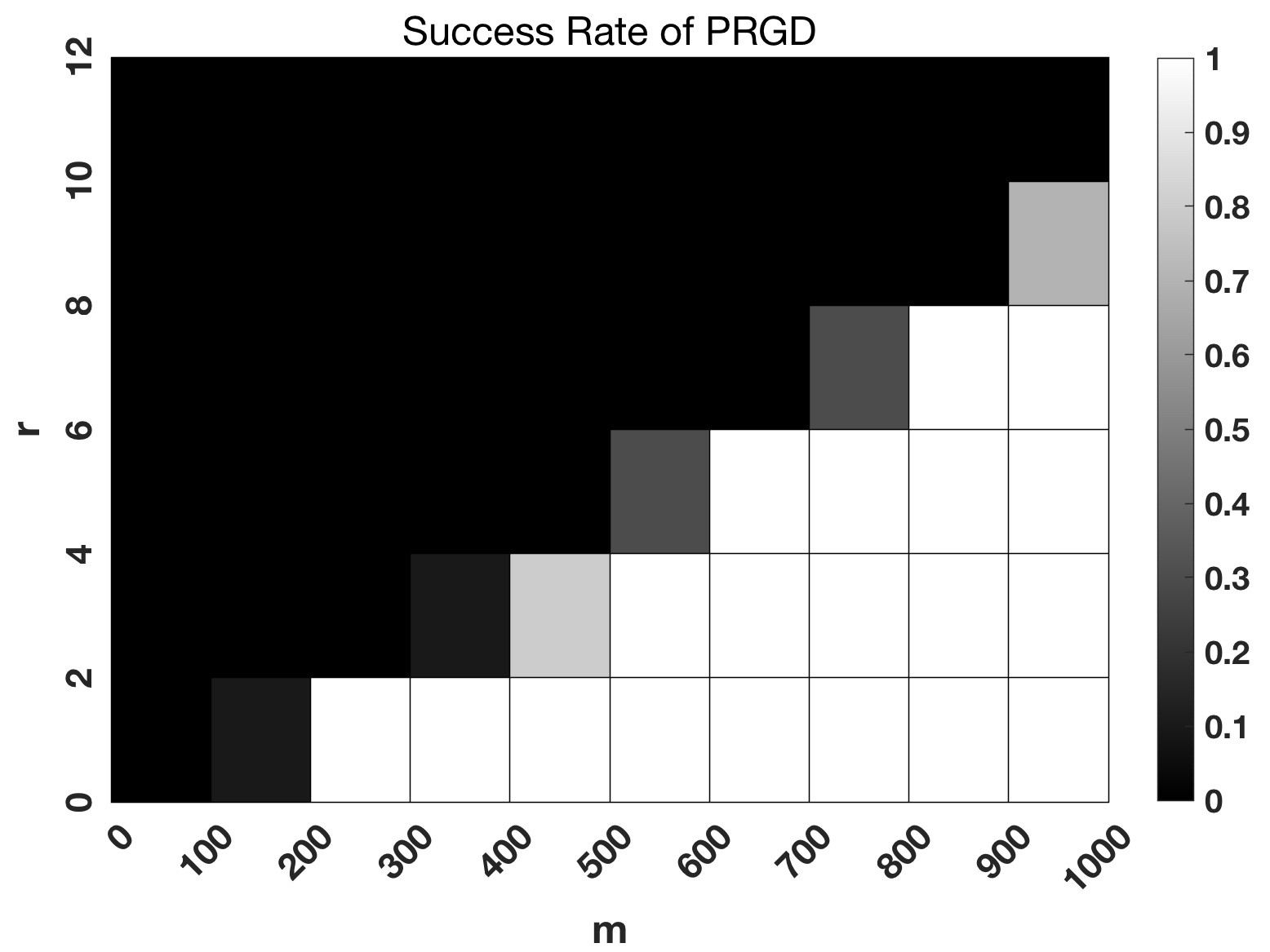} } 	
        \subfloat[{\tt adaptive-PRGD}]{ \includegraphics[width=0.3\linewidth]{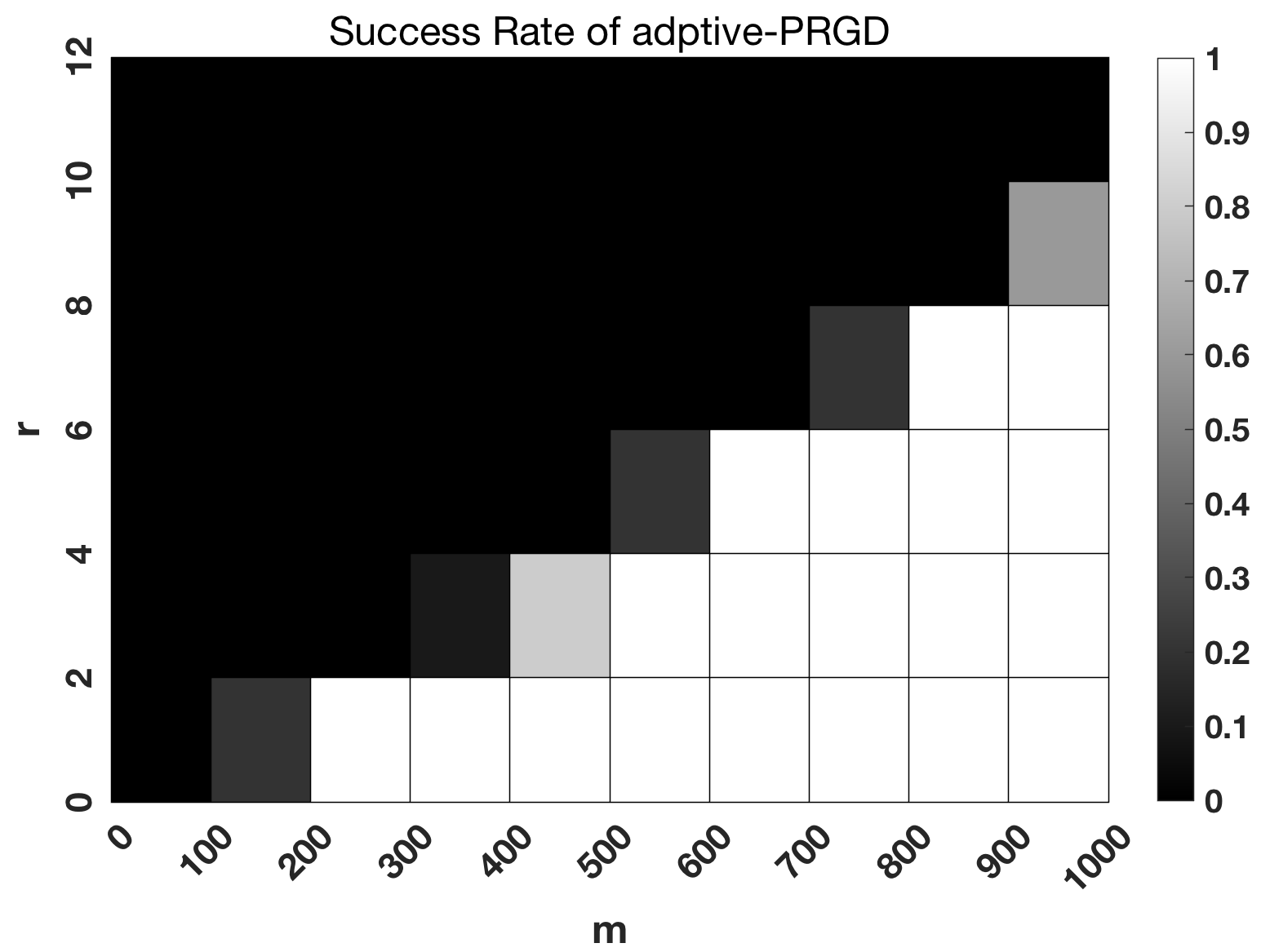} } 	
	\caption{Results of success rate for matrix sensing on simulated data. The unknown matrix has size $50 \times 50$.  The success rate is indicated by the grayscale of the box, where white indicates $100\%$ success rate and black indicates $0\%$ success rate.}
    \label{lmr_3}
\end{figure}

\paragraph{Success Rates.} We test the recovery ability of the PRGD algorithm. We compare the success rates of adaptive-PRGD with adaptive-RGD and PRGD for recovering the test matrix $\bm{X}$. An algorithm is considered to successfully recover the test matrix $\bm{X}$ of rank $r$ if the output value $\bm{X}_t$ satisfies 
$$
\frac{\left\| \bm{X}_t - \bm{X} \right\|_{F}}{\left\| \bm{X} \right\|_{F}} \leq 10^{-4}
$$
within $500$ iterations. We fix the matrix size to $n_1= n_2 =50$ and vary the rank $r \in\{2, 4, 6, 8, 10, 12\}$ and the number of measurements $m \in \{100, 200, 300, 400, 500, 600,  700, 800, 900, 1000\}$. We present the results in Figure \ref{lmr_3}, where we obtain the success rates of all the algorithms through 10 independent trials.

The results in Figure \ref{lmr_3} indicate that the success rate of adaptive-PRGD is comparable to that of PRGD and is better than adaptive-RGD. When the rank $r$ and the number $m$ of measurements are small, the success rate of adaptive-PRGD is higher than that of adaptive-RGD. For instance, when $r=2$ and $m = 300$, PRGD and adaptive-PRGD are $100\%$ successful, while the success rate of adaptive-RGD is $80 \%$. Adaptive-RGD requires more measurements to be $100\%$ successful than adaptive-PRGD and PRGD when the rank $r$ is relatively large. For example, when $r=8$, adaptive-PRGD and PRGD can be $100\%$ successful for $m=900$, while the success rate of adaptive-RGD is $90 \%$. When $m=800$, the success rate of adaptive-PRGD and PRGD is $30 \%$ and $20 \%$, respectively, while the success rate of adaptive-RGD is $0 \%$.

\subsection{Phase Retrieval}
\label{sec:PR}
Finally, we evaluate the performance of PRGD on phase retrieval problems. For simplicity, we consider the real case, but all approaches can be applied to the complex case. In phase retrieval, the goal is to find a vector $\bm{x} \in \mathbb{R}^{n}$ that satisfies the following system of phaseless equations:
\begin{equation}\label{PR}
| \bm{A}x|^2 = \bm{y},
\end{equation}
where $\bm{A} \in \mathbb{R}^{m \times n}$ and $\bm{y} \in \mathbb{R}^{m}$ are known.  We denote $\bm{a}_i^T$ as the $i$-th row of $\bm{A}$. Let $\mathcal{A}$ be a linear operator that maps an $n \times n$ matrix to a vector of length $m$, defined as follows:
$$
\mathcal{A} \bm{Z} = \left\{ \langle \bm{Z}, \bm{a}_i\bm{a}_i^T  \rangle \right\}_{i=1}^{m},  \quad \forall ~\bm{Z} \in \mathbb{R}^{n \times n}.
$$ 
Then, \eqref{PR} is equivalent to
$$
\mathcal{A}\bm{X} = \bm{y},
$$
where $\bm{X} = \bm{x}\bm{x}^T$. Since there is a one-to-one correspondence between $\bm{X}$ and $\bm{x}$, we can seek to reconstruct the rank-$1$ positive semidefinite matrix $\bm{X}$ from $\mathcal{A}\bm{X} = \bm{y}$ instead of reconstructing $\bm{x}$. Therefore, phase retrieval can be formulated as a low-rank matrix recovery problem \eqref{eq:y=AX} and \eqref{eq:defA} with $n_1=n_2=n$, $r=1$, and $\bm{A}_i = \bm{a}_i\bm{a}_i^T$.

We generate the underlying signal $\bm{x}$ with length $n=128$ and i.i.d. Gaussian entries, and the measurement matrix $\bm{A}$, with i.i.d. Gaussian entries. For all experiments, we set $m= 6n$. Since the degree of freedom in the rank-1 matrix $\bm{X}$ is $2n -1$, the oversampling ratio is roughly $1/3$. We use the stopping criterion $\frac{\left\|  \mathcal{A}\bm{X}_t - \bm{y} \right\| }{\left\| \bm{y} \right\| } \leq 10^{-8}$ for all algorithms. 

The results, presented in Figure \ref{PR}(a), show that the adaptive-PRGD algorithm requires significantly fewer iterations than the adaptive-RGD, RGD, and PRGD algorithms. Specifically, the number of iterations for adaptive-PRGD is approximately one-third that of adaptive-RGD and RGD, and the number of iterations for PRGD is comparable to that of adaptive-RGD. This experimental result demonstrates that our data-driven metric is highly effective for the phase retrieval problem. However, in our current implementation, the computational time of adaptive-RGD is less than that of adaptive-PRGD because adaptive-RGD does not require the explicit calculation of the gradient $\bm{G}_t$, while adaptive-PRGD needs the explicit expression of $\bm{G}_t$, which is time-consuming. Future research will explore methods to efficiently implement PRGD for phase retrieval.

\begin{figure}[htbp]
	\centering
	\subfloat[{\tt Gaussian(real)}]{ \includegraphics[width=0.45\linewidth]{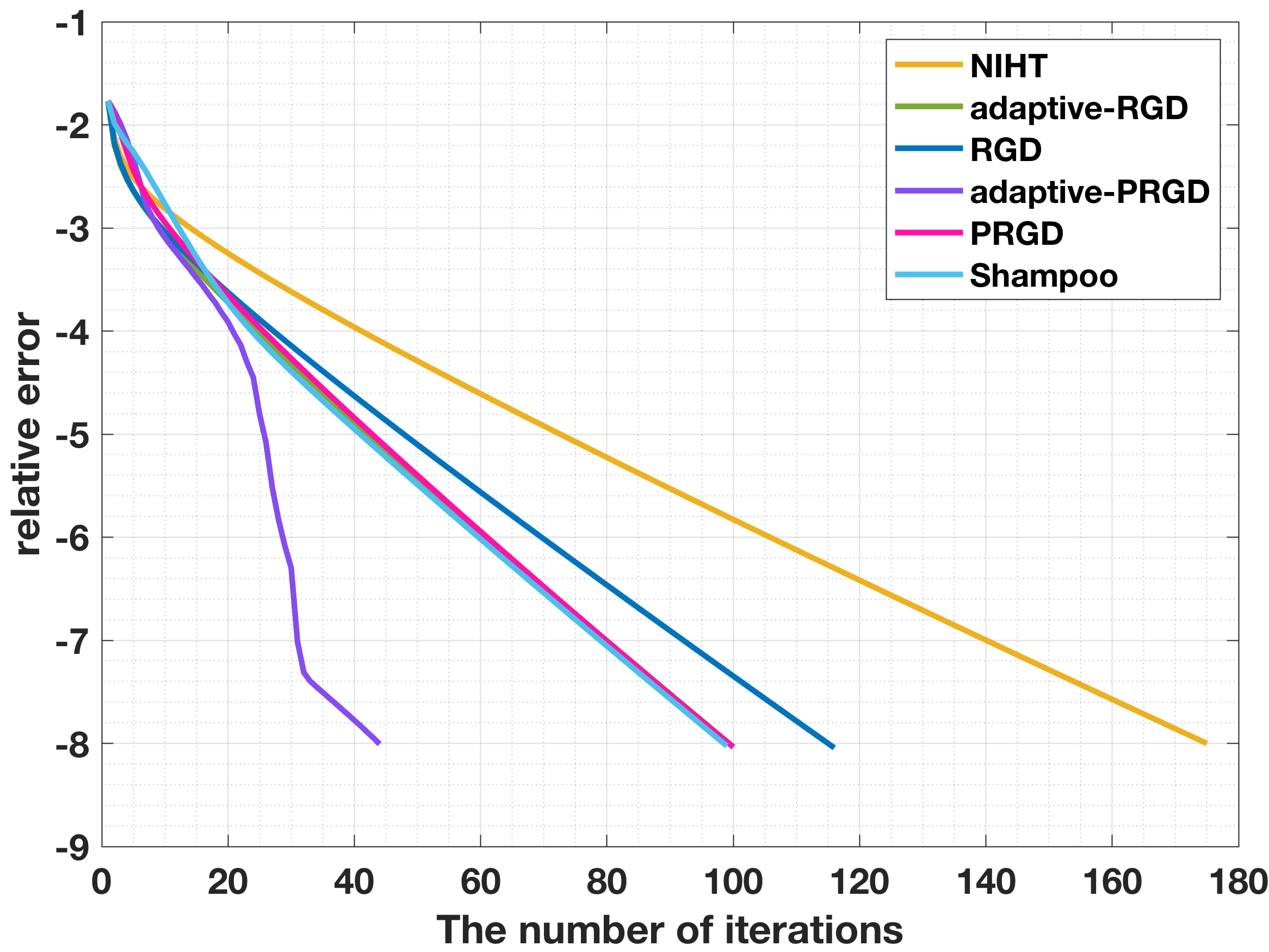} }  	
	\subfloat[{\tt Gaussian(complex)}]{ \includegraphics[width=0.45\linewidth]{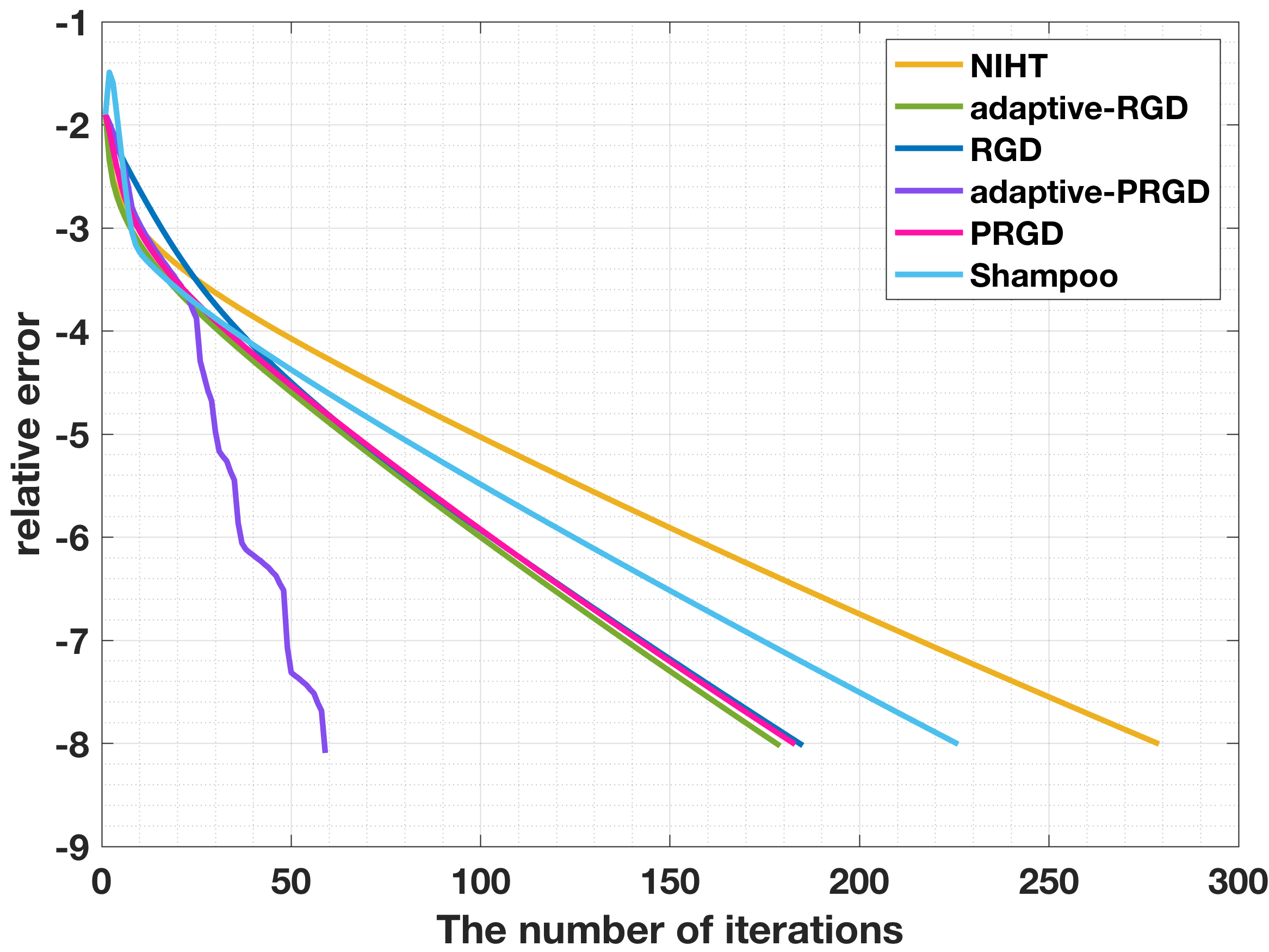} } 	

	\caption{Results of the number of iterations for phase retrieval on simulated data. The unknown signal has a size of $128$.}
    \label{PR}
\end{figure}

The phase retrieval problem and algorithms can be extended from the real case to the complex case, where both the unknown vector $\bm{x}$ and the measurement matrix $\bm{A}$ are complex-valued. We conduct experiments on complex phase retrieval and compare all algorithms. The results are presented in Figure \ref{PR}(b). The findings are consistent with the results obtained in the real case shown in Figure \ref{PR}(a).

\section{Conclusion and Future Direction}
\label{sec:conclusion}
 In this paper, we proposed a preconditioned Riemannian gradient descent (PRGD) algorithm for low-rank matrix recovery problems.  The preconditioner is constructed from the measurement and iteration data, and it is easy to compute.  We proved PRGD converges linearly to the underlying low-rank matrix under the restricted isometry property (RIP). We evaluated the performance of PRGD on various low-rank matrix recovery problems, including low-rank matrix completion, low-rank matrix sensing, and phase retrieval. The experiment results demonstrated that  PRGD outperforms RGD, reducing the number of iterations and being up to 10 times faster than the RGD algorithm for matrix completion, making it a more efficient option for low-rank matrix recovery. Overall, we believe that our proposed preconditioner and PRGD algorithm are significantly efficient for low-rank matrix recovery. 

 However, as demonstrated in Section \ref{sec:PR} of this paper, for the phase retrieval problem, although PRGD can reduce the number of iterations compared to RGD,  the computational time in the current implementation of the algorithm has not been further reduced yet. In the future, we will explore ways to address this issue. Additionally, our proposed new preconditioner and PRGD algorithm can be extended to solve other recovery problems, such as low-rank tensor completion. Furthermore, we can also construct new and more effective preconditioners tailored to different problems. We plan to pursue this line of research in the future.

 \section*{Acknowledgments}
Jian-Feng Cai is partially supported by Hong Kong Research Grant Council GRF 16306821 and GRF 16310620, and Hong Kong Innovation and Technology Fund MHP/009/20. Fengmiao Bian is also partially supported by an outstanding Ph.D. graduate development scholarship from Shanghai Jiao Tong University. 

%%%%%%%%%%%%%%%%%%%%%%%%%%%%%%%%%%%%
\begin{small}
\bibliographystyle{plain}
\bibliography{bib}
\end{small}

\end{document}